\theoremstyle{plain}
\newtheorem{thm}{Theorem}
\newtheorem{cor}[thm]{Corollary}
\newtheorem{lem}[thm]{Lemma}
\newtheorem{prop}[thm]{Proposition}
\newtheorem{claim}[thm]{Claim}
\theoremstyle{remark}
\newtheorem{definition}{Definition}[section]
\newtheorem{remark}{Remark}[section]
\newcommand{\ccc}{\mathbf{c}}
\newcommand{\D}{\mathbb{D}}
\newcommand{\ddd}{\operatorname{d}}
\newcommand{\E}{\mathbb{E}}
\newcommand{\GG}{\mathscr{G}}
\newcommand{\hp}{\mathbf{h}}
\newcommand{\m}{\mathfrak{m}}
\newcommand{\M}{\mathcal{M}}
\newcommand{\N}{\mathbb{N}}
\renewcommand{\P}{\mathbb{P}}
\newcommand{\PP}{\mathscr{P}}
\newcommand{\QQ}{\mathscr{Q}}
\newcommand{\R}{\mathbb{R}}
\newcommand{\T}{\mathscr{T}}
\newcommand{\ww}{\mathtt{w}}
\newcommand{\XX}{\mathfrak{X}}
\newcommand{\z}{\mathbf{z}}
\newcommand{\BF}{{\operatorname{BF}}}
\newcommand{\DF}{{\operatorname{DF}}}
\newcommand{\dd}{\mathbf{d}}
\newcommand{\dist}{\operatorname{{dist}}}
\newcommand{\e}{\mathbf{e}}
\newcommand{\ee}{\tilde{\e}}
\newcommand{\ER}{Erd\H{o}s-R\'{e}nyi }
\newcommand{\eps}{\varepsilon}
\newcommand{\fl}[1]{\lfloor #1 \rfloor}
\newcommand{\GHP}{{\operatorname{GHP}}}
\newcommand{\ghp}{{Gromov-Hausdorff-Prokhorov }}
\newcommand{\Leb}{\operatorname{Leb}}
\newcommand{\scale}{\operatorname{scale}}
\newcommand{\sur}{\operatorname{{sur}}}
\newcommand{\weakarrow}{\overset{d}{\Longrightarrow}}
\begin{document}
	
	\begin{frontmatter}
		\title{Epidemics on critical random graphs with heavy-tailed degree distribution}
		\runtitle{Epidemics on critical graphs}
		
		\begin{aug}
			\author[A]{\fnms{David} \snm{Clancy, Jr. }\ead[label=e1]{djclancy@uw.edu}},
			\address[A]{University of Washington, Department of Mathematics, \printead{e1}}
			
		\end{aug}
		
		\begin{abstract}
			We study the susceptible-infected-recovered (SIR) epidemic on a random graph chosen uniformly over all graphs with certain critical, heavy-tailed degree distributions. For this model, each vertex infects all its susceptible neighbors and recovers the day after it was infected. When a single individual is initially infected, the total proportion of individuals who are eventually infected approaches zero as the size of the graph grows towards infinity. Using different scaling, we prove process level scaling limits for the number of individuals infected on day $h$ on the largest connected components of the graph. The scaling limits are contain non-negative jumps corresponding to some vertices of large degree, that is these vertices are super-spreaders. Using weak convergence techniques, we can describe the height profile of the $\alpha$-stable continuum random graph \cite{GHS.18, CKG.20}, extending results known in the Brownian case \cite{MS.19}. We also prove abstract results that can be used on other critical random graph models. 
		\end{abstract}
		
		\begin{keyword}[class=MSC2020]
			\kwd[Primary ]{92D30}
			\kwd{60F17}
			\kwd[; secondary ]{05C80}
		\end{keyword}
		
		\begin{keyword}
			\kwd{configuraiton model} 
			\kwd{stable excursions}
			\kwd{random graphs}
			\kwd{Lamperti transform}
			\kwd{SIR model}
		\end{keyword}
		
	\end{frontmatter}
	

	\section{Introduction}
	
	Consider the following simple susceptible-infected-recovered (SIR) model of disease spread in discrete time. On day $0$, a single individual becomes infected with a disease. On day 1, that single infected individual comes into contact with some random number (possibly zero) of non-infected individuals and transmits the disease. After transmitting the disease to others, this initial infected individual is cured and can never catch the disease again. On subsequent days each infected individual does the same thing: they come into contact with some non-infected individuals, transmit the disease but then are cured. The study of how the disease spreads over time naturally gives rise to a graph \cite{BM.90} constructed in a breadth-first order, see Figure \ref{fig:examplepicture1} for an example of a small outbreak and Figure \ref{fig:bigoutbreak1} for an example of a larger outbreak. The individuals are represented by vertices, and an edge between two vertices represents that a vertex closer to the source transmitted the disease to the other. Knowing the graph and the source tells us more information than the number of individuals infected on a particular day, it tells us the history of how the disease spread from individual from individual.
	
	The size of the outbreak then corresponds to the size of a connected component in the graph and, more importantly for our work, the number of people infected on day $h = 0,1,\dotsm$ is just the number of vertices at distance $h$ from a root vertex corresponding to the initially infected individual. Let $Z_n(h)$ represent the number of people infected on day $h\ge 0$ when the total population is of size $n$. The process $Z_n(h)$ is just the \textit{height profile} of the component containing the initially infected individual. We are interested in the describing $n\to\infty$ scaling limits of $Z_{n}(h)$ for the macroscopic outbreaks for certain critical random graphs which exhibit a ``super-spreader'' phenomena - that is they possess vertices with large degree.
	
	\begin{figure}
		\centering
		\includegraphics[width=0.5\linewidth]{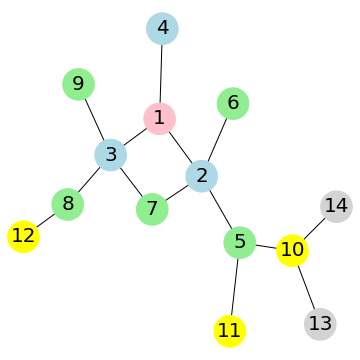}
		\caption{A small outbreak. Here, on day 0 the vertex labeled 1 is infected. The vertex 1 transmits the disease to vertices 2, 3 and 4 (in blue) who become the infected population on day 1. The vertices infected on day 1 will infect the green vertices (5 through 9) who are infected on day 2. This continues with the yellow vertices becoming infected on day 3, and the grey vertices on day 4. }
		\label{fig:examplepicture1}
	\end{figure}

	A classical probabilistic model in this area is the so-called Reed-Frost model, where each individual comes into contact with every non-infected individual independently with probability $p$. It is not hard to see that the corresponding graph is the \ER random graph $G(n,p)$ where each edge is independently added with probability $p$. This object is well-studied, and we know that in the critical window $p = p(n) = n^{-1}+\lambda n^{-4/3}$ the size of the macroscopic outbreaks are of order $n^{2/3}$ \cite{Aldous.97}. Within this critical window each vertex has approximately Poisson(1) many neighbors, so in particular it has light tails. In turn, the process $Z_n(h)$ corresponding to the largest component has a scaling limit and that limit is a continuous process \cite{MS.19}. We stress that this is not because we are looking only at an epidemic started from a single individual. The same can be said if we infect $O(n^{1/3})$ individuals on day $0$ \cite{Clancy.20}.
	
	To capture some super-spreading phenomena we focus mostly on the configuration model with a heavy-tailed degree distribution: $\P(\deg(i) = k) \sim c k^{-(2+\alpha)}$ for some $\alpha\in(1,2)$, along with some other technical assumptions dealing with criticality. The configuration model is a graph on $n$ vertices chosen randomly over all graphs with a prescribed degree sequence. See Chapter 7 of \cite{vanderHofstad.17} for an introduction to this model. We omit the case $\alpha = 2$ because this model falls within the same universality class as the critical \ER random graph $G(n,n^{-1}+\lambda n^{-4/3})$ \cite{BBSW.14,CKG.20} and so, up to some scaling factors, the structure of the processes $Z_{n}(h)$ on largest components (which correspond to the largest possible outbreaks) will be asymptotically the same as those in the \ER random graph. In the asymptotic regime we study, the largest outbreaks are of order $O(n^{\frac{\alpha}{\alpha+1}})$ and scaling limits of $Z_{n}(h)$ will possess positive jumps. These positive jumps come from presence of the super-spreading individuals.
	
	We also restrict our focus to critical regimes. One reason is general principle that what happens at a phase transition is often interesting. Another is that while there are some important results on the structure of the largest components of the critical heavy-tailed configuration model \cite{CKG.20,Joseph.14}, there is not much information on the structure of the disease outbreaks. In this vein, there are results in the literature on the behavior of the largest outbreak when initially only a single individual is infected. While studying a model similar to ours where edges are kept with probability $p\in[0,1]$ but are otherwise deleted, the authors of \cite{BJML.07} show that there is a parameter $R_0$ such that if $R_0\le 1$ then only outbreaks of size $o(n)$ as $n\to\infty$ can occur whereas if $R_0> 1$ there is a positive probability that an outbreak of size $O(n)$ occurs as $n\to\infty$. See also \cite{MR.95,MR.98,JL.09}. A continuous time analog of that model was studied in \cite{BP.12} and there the authors show that there is a similar phase transition between outbreaks of size $o(n)$ and outbreaks which are of size $O(n)$ with positive probability. Those authors also describe some of the large $n$ behavior of $Z_n(t)$ (the number of individuals infected at a continuous time $t \ge 0$) conditionally on having an outbreak of size $O(n)$, but they do not provide information for what happens at the phase transition. We hope to fill in this gap in the literature.

	\begin{figure}
		\centering
		\includegraphics[width=0.9\linewidth]{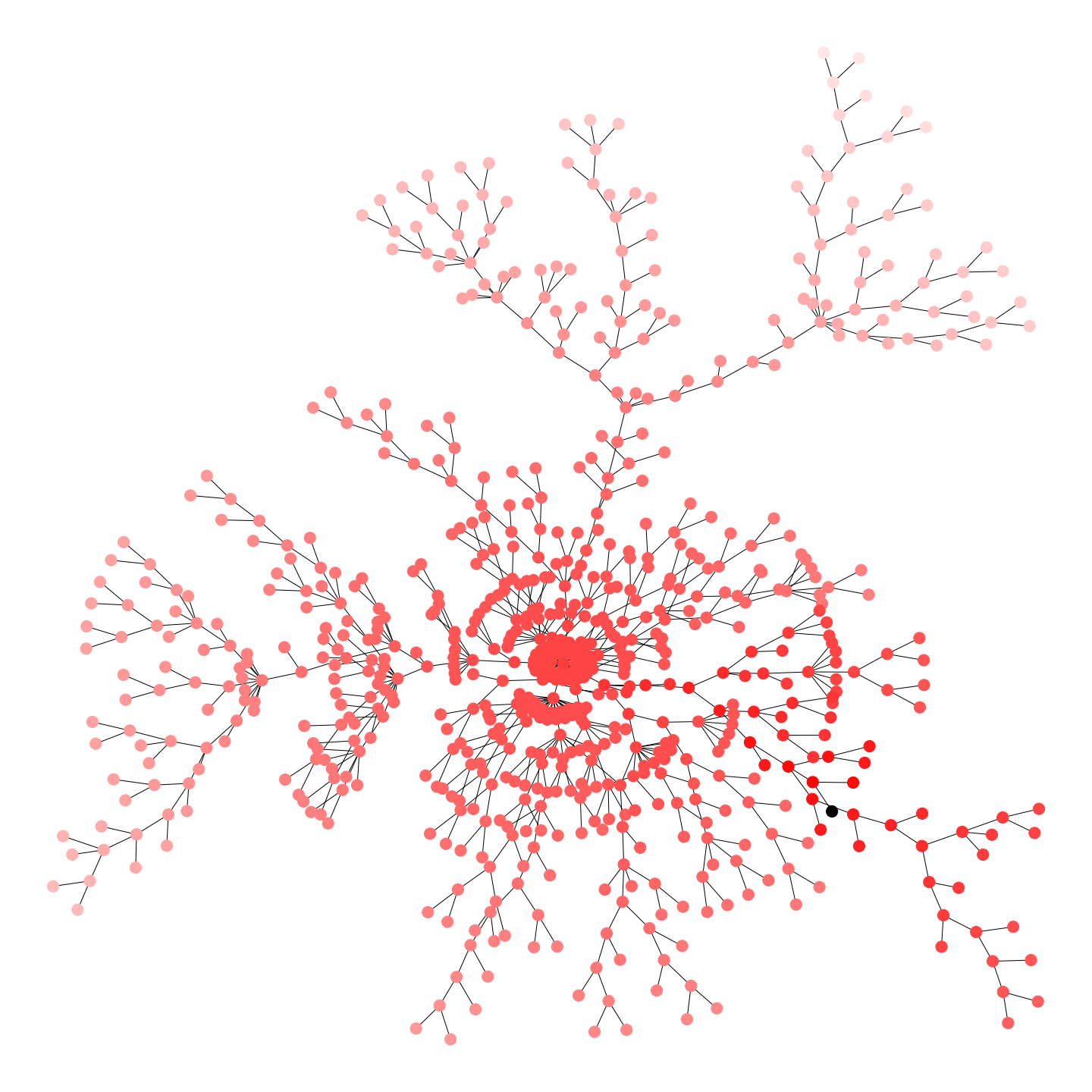}
		\caption{A simulation of the largest outbreak on a configuration model with heavy-tailed degree distribution with $\alpha = 3/2$. This component has 735 vertices, while the entire graph has 70,000. The black node is the first vertex to be infected, and then darker shades indicate that the corresponding vertex infected earlier in the outbreak. Most of the vertices have small degree ($\le 3$); however, there are some vertices with large degree.  The large red blob in the middle of the image comes from a vertex of relatively large degree, i.e. a super-spreader. We can also see that there is another super-spreader depicted just below that red blob.}
		\label{fig:bigoutbreak1}
	\end{figure}
	
	\subsection{Weak convergence results}

	Let us discuss a little more formally the configuration model. Before doing so, we recall that a multi-graph can have multiple edges and self-loops while a simple graph does not contain multiple edges nor self-loops. In terms of our approach to studying epidemics, self-loops and multiple edges do not make any physical sense because, for example, an infected individual cannot reinfect themself. 
	
	Given $\dd^{n} = (d_1,\dotsm,d_n)$ a finite sequence of strictly positive integers $d_j\ge 1$, the configuration model $M(\dd^n)$ is the random multi-graph chosen randomly over all multi-graphs $G$ on the vertex set $[n]:=\{1,\dotsm,n\}$ where the degree (counted with multiplicity) of vertex $j$ is $\deg(j) = d_j$. In order to construct such a multi-graph we need $\sum_{j=1}^n d_j$ to be even, and two algorithms for its construction will be discussed in Section \ref{sec:configuration}. We say that any such graph $G$ has degree sequence $\dd^n$.
	
	A priori it may not be possible to construct a simple graph on with degree sequence $\dd^n$ because, for example, a single vertex may have degree $d_i> \sum_{j\neq i} d_j$. However, if there is a simple graph with degree sequence $\dd^n$, then conditionally on the event $\{M(\dd^n) \text{ is simple}\}$ the graph is uniformly distribution over all simple graphs with degree sequence $\dd^n$ \cite[Proposition 7.15]{vanderHofstad.17}. Moreover for the asymptotic regime we study it makes no difference \cite{CKG.20} whether or not we examine simple graphs or multi-graphs so we will just say ``graph.''
	
	One aspect of randomness for the configuration model comes from taking the graph to be randomly constructed over all graphs with a fixed deterministic degree sequence. Another comes from taking the degree sequence itself to be random, say, with a common distribution $\nu$ on $\{1,2,\dotsm\}$. We then generate the graph conditionally given this degree distribution. That is we generate $M(\dd^n)$ where $d_j$ are i.i.d. with common law $\nu$. We may have to replace $d_n$ with $d_n+1$ to obtain the proper parity; however, this does not affect the analysis \cite{CKG.20}. To distinguish between these two situations we will write $M_n(\nu)$ instead of $M(\dd^n)$. 
	
	We focus on the degree distributions studied by Joseph \cite{Joseph.14} and Conchon-Kerjan and Goldschmidt \cite{CKG.20}:
	\begin{equation}
		\label{eqn:cdelta1}
		\lim_{k\to\infty}  k^{(\alpha+2)}\nu(k) = c\in (0,\infty),\qquad \E[d_1] = \delta\in (1,2),\qquad \E[d_1^2] = 2\E[d_1],
	\end{equation} for some $\alpha\in (1,2)$. The third statement about the second moment and the mean imply that we are examining the random graph at criticality \cite{MR.95,MR.98,JL.09}. This means that there is no giant component, i.e. there is no single component which contains a positive proportion of the total number of vertices. Instead, there are macroscopic components which are of order $O(n^{\frac{\alpha}{\alpha+1}})$. 
	
	In order to obtain scaling limits for a height profile represent the number of people infected on day $h$, we would either need to look at the case where a significant number of individuals are infected on day zero, or focus on the largest possible outbreaks. We focus on the latter situation and hence decompose the graph $M_n(\nu)$ into its connected components $G_n^1, G_n^2,\dotsm$ where they are indexed so
	\begin{equation*}
		\#G_n^1\ge \#G_n^2\ge \dotsm.
	\end{equation*} In order to know how a disease spreads through $G_n^i$, we need to know its source. We will start the spread from a single vertex $\rho_n^i$ chosen with probability proportional to its degree, and we will say that the component $G_n^i$ is \textit{rooted} at the vertex $\rho_n^i$. 
	
	The selection of $\rho_n^i$ is a size-biased sample and not a uniform sample, but this is for good reason. In terms of how a disease spreads through a community, vertices with higher degree have more neighbors from whom they can catch the disease and so we should expect these vertices to be infected earlier in the outbreak. This has been observed in a survey of how influenza (seasonal or the H1N1 variant) spread through Harvard in 2009 \cite{CF.10}. Researchers surveyed two sets of students twice-weekly to see when they developed flu-like symptoms. One set was a random sample of all students and the other was a sample of friends nominated by this original set. The set of friends was size-biased sample of the students at Harvard and not a uniform sample. Sometimes called ``the friendship paradox,'' this is just the observation that the average number of friends of friends is always greater than the average number of friends \cite{Feld.91}. In the study of influenza, the set of friends showed flu-like symptoms earlier than the uniform random sample. See also \cite{GHMCCF.14}.

	Of course, there are only a finite number $K_n$, say, of connected components which correspond to each of the outbreaks. To simplify the presentation we set $G_n^i$ for $i>K_n$ as the graph on a single vertex with no edges and rooted at its only vertex.
	
	The $i^\text{th}$ largest possible outbreak is then described by the process $Z_{n,i} = (Z_{n,i}(h) : h = 0,1,\dotsm)$ defined by
	\begin{equation}\label{eqn:discProfile}
		Z_{n,i}(h) = \#\{v\in G_n^i: \dist(\rho_n^i, v) = h\},
	\end{equation} where $\dist(-,-)$ is the graph distance on $G_n^i$. In terms of the graph, $Z_{n,i}$ is the {height profile} of the component $G_{n}^i$.
	Our first result is the joint convergence of the processes $Z_{n,i}$ to a time-change of some excursion processes $\tilde{e}_i = (\tilde{e}_i(t);t\ge 0)$. The processes $\tilde{e}_i$, for $i\ge 1$, are the excursions above past minima of a certain stochastic process $\tilde{X}$ obtained by an exponential tilting of a spectrally positive $\alpha$-stable processes. See Section \ref{sec:levy} for more information on these processes.
	\begin{thm}\label{thm:Epidemic3}
		Fix some $\alpha\in (1,2)$, and some distribution $\nu$ satisfying \eqref{eqn:cdelta1}. In the product Skorohod topology on $\D(\R_+)^\infty$, the following convergence holds
		\begin{equation*}
			\left( \left(n^{-\frac{1}{\alpha+1}} Z_{n,i} (\fl{n^{\frac{\alpha-1}{\alpha+1}}t});t\ge 0 \right);i\ge 1\right) \weakarrow ((Z_i(t);t\ge 0);i\ge 1),
		\end{equation*} where $Z_i$ is the unique c\`adl\`ag solution to
		\begin{equation*}
			Z_i(t) = \tilde{e}_i\circ C_i(t) ,\qquad C_i(t) = \int_0^t Z_i(s)\,ds,\qquad \inf\{t>0: C_i(t)>0\} = 0,
		\end{equation*} where $\tilde{e}_i =(\tilde{e}_i(t): t\ge 0)$ are defined in equation \eqref{eqn:defEi} and depend on the value $\alpha$, $c$ and $\delta$ in \eqref{eqn:cdelta1}.
	\end{thm}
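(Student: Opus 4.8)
The plan is to realize $Z_{n,i}$ as an explicit deterministic functional of the breadth-first exploration walk of $M_n(\nu)$ and then to transfer the scaling limit of that walk through the functional by a continuous mapping argument; the functional in question is a discrete Lamperti time change, which is exactly why the limit acquires the form $Z_i = \tilde e_i \circ C_i$. Concretely, I would first fix the breadth-first exploration of $M_n(\nu)$: half-edges are paired one vertex at a time, vertices are processed in the order in which they are discovered, and each new component is seeded at a uniformly chosen unpaired half-edge, so that the first vertex of $G_n^i$ -- hence the root $\rho_n^i$ -- is size-biased by degree, matching the convention in the theorem. Let $X^n = (X^n_t)_{t\ge 0}$ record the number of active (revealed but unpaired) half-edges after $t$ vertices have been processed; its excursions above its running infimum enumerate the components $G_n^i$. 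Since breadth-first search computes graph distances, once all vertices of $G_n^i$ at distance $\le h$ from $\rho_n^i$ have been processed the active half-edges are exactly those attached to the distance-$(h+1)$ vertices, so, up to a bounded time shift and an $O_\P(1)$ correction coming from the surplus half-edges paired inside the component, $Z_{n,i}(h+1) = X^n_{L_{n,i}(h)}$ with $L_{n,i}(h) = \sum_{k=0}^h Z_{n,i}(k)$ measured from the start of the $i$-th excursion. Iterating, $Z_{n,i}$ is the unique solution of the discrete system $Z_{n,i}(h) = e^n_i\bigl(L_{n,i}(h-1)\bigr)$, $L_{n,i}(h-1) = \sum_{k<h} Z_{n,i}(k)$, where $e^n_i$ is the $i$-th excursion of $X^n$ -- the discrete analogue of the system in the statement. (This step has a natural abstract formulation applicable to other exploration-encoded critical graphs.)

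Next I would record the convergence $\bigl(n^{-\frac1{\alpha+1}} X^n_{\fl{n^{\frac{\alpha}{\alpha+1}}s}}\bigr)_{s\ge 0} \weakarrow (\tilde X_s)_{s\ge 0}$ in $\D(\R_+)$, jointly with the convergence of the ordered excursions above the running infimum, $e^n_i \weakarrow \tilde e_i$ for each $i\ge 1$ together with their lengths, in the product Skorohod topology. For the breadth-first walk this follows exactly as in \cite{CKG.20, Joseph.14}: after the size-biased reordering produced by the exploration, the increments form a triangular array whose drift vanishes at the critical scale -- this is precisely where $\E[d_1^2] = 2\E[d_1]$ enters -- and whose positive jumps, coming from the large-degree vertices, are governed by the heavy tail in \eqref{eqn:cdelta1}; their partial sums converge to the exponentially tilted spectrally positive $\alpha$-stable process $\tilde X$. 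The passage from this to joint excursion convergence is standard, using that macroscopic excursions cannot be missed and that the excursion lengths are tight.

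The core of the argument is then a deterministic continuity statement: the map sending $f\in\D(\R_+)$ to the c\`adl\`ag solution $z$ of $z(t) = f\bigl(\int_0^t z\bigr)$ with $\inf\{t>0:\int_0^t z>0\}=0$ is continuous, for the Skorohod topology, at the excursions $f = \tilde e_i$ of $\tilde X$ -- which almost surely have only positive jumps, do not jump at $0$, and are strictly positive on the interior of their support -- and its discrete counterpart from the first step converges to it under the present scaling. Granting the well-posedness of this system from Section~\ref{sec:levy}, continuity is checked by a Gr\"onwall/approximation estimate: the time change $t\mapsto\int_0^t z$ is Lipschitz and strictly increasing wherever $z>0$, so a jump of $\tilde e_i$ at "area-time" $c$ is carried to a jump of $Z_i$ at the unique height $h$ with $C_i(h)=c$, and small Skorohod perturbations of $f$ move $z$ by a controlled amount; the behaviour near $0$ is handled using the boundary condition and the fact that $\tilde e_i$ starts by increasing. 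Since $n^{\frac1{\alpha+1}}\cdot n^{\frac{\alpha-1}{\alpha+1}} = n^{\frac{\alpha}{\alpha+1}}$, the rescaled cumulative profile $L_{n,i}$ converges to $C_i$, and combining the three steps with the continuous mapping theorem gives the claimed joint convergence $\bigl(n^{-\frac1{\alpha+1}} Z_{n,i}(\fl{n^{\frac{\alpha-1}{\alpha+1}}t})\bigr)_{i\ge 1} \weakarrow (Z_i)_{i\ge 1}$.

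I expect the main obstacle to be this last continuity step: the limiting excursions carry infinitely many small jumps with jump times accumulating at $0$, which makes Skorohod-continuity of the Lamperti time change delicate, and one must rule out degeneracies -- the profile instantaneously returning to $0$ at an interior time, or the super-spreader jumps piling up so that $C_i$ fails to be strictly increasing. A secondary point is reconciling the breadth-first walk above with whatever exploration is used in \cite{CKG.20}: this is resolved by noting that both walks have increments driven by the same size-biased degree sequence, hence the same scaling limit, and that surplus pairings contribute only $O_\P(1)$ to each macroscopic component and are therefore invisible at scale $n^{\frac1{\alpha+1}}$.
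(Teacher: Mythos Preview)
Your identification of the discrete Lamperti structure and the scaling limit of the breadth-first walk is correct and matches the paper's setup. The genuine gap is in your third step: the map $f \mapsto z^0$ from an excursion to the non-trivial solution of $z = f\circ\int_0^{\cdot} z$ is \emph{not} continuous at $\tilde e_i$ in the Skorohod topology, and no Gr\"onwall estimate can repair this. The problem is precisely at $t=0$: since $\tilde e_i(0)=0$, Proposition~\ref{prop:AUB} shows that the limiting equation has a one-parameter family of solutions $c^\lambda(t)=c^0((t-\lambda)_+)$, including the identically zero solution. Your discrete profiles start from $Z_{n,i}(0)=1$, which after rescaling by $n^{-1/(\alpha+1)}$ tends to $0$; nothing in a Gr\"onwall argument prevents the rescaled $C_{n,i}$ from converging to $c^\lambda$ for some random $\lambda>0$, or even to the zero solution. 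Your remark that ``$\tilde e_i$ starts by increasing'' only guarantees $\int_{0+}1/\tilde e_i<\infty$, i.e.\ that $c^0$ is non-trivial; it does not select $c^0$ among the $c^\lambda$ as the limit of the discrete system. The Lipschitz/Gr\"onwall control you invoke is valid on $\{t:z(t)\ge\varepsilon\}$ for each fixed $\varepsilon>0$, but the constants blow up as $\varepsilon\downarrow 0$, which is exactly the regime that distinguishes $c^0$ from $c^\lambda$.

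The paper resolves this by bringing in an entirely different ingredient that you do not mention: the Gromov--Hausdorff--Prokhorov convergence of the rescaled components $G_n^i$ to the continuum graphs $\M_i$ (Theorem~\ref{thm:ckgThm2}). After establishing tightness of $(\tilde C_{n,i},\tilde X_{n,i})$ and that any subsequential limit $(C_i,X_i)$ solves the integral equation (Proposition~\ref{prop:jointCX}), the paper observes that $C_{n,i}(\lfloor\alpha_n t\rfloor)$ is literally the mass of a ball of radius $t$ in the rescaled graph; by Lemma~\ref{lem:measrueOfBalls} this passes to the limit, giving $C_i(t)=\mu_i(B(\rho_i,t))$ a.s.\ for all but countably many $t$. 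Proposition~\ref{prop:ass3Foralpha} then shows $\mu_i(B(\rho_i,t))>0$ for every $t>0$, forcing $\inf\{t:C_i(t)>0\}=0$ and hence $C_i=c^0$. In short, the paper does not prove continuity of the Lamperti map --- it proves tightness plus characterization of subsequential limits, using the metric-space limit as the external input that pins down the boundary condition. Your proposal either needs this GHP input, or a separate argument (as in \cite{AUB.20} for trees, via a combinatorial transformation) showing directly that the discrete profiles cannot stall near zero; the latter is model-specific and not supplied by your sketch.
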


	\subsection{A single macroscopic outbreak and the $\alpha$-stable graph}
	
	More has been said about the graph components $G_{n}^i$ in the literature. Joseph \cite{Joseph.14} has argued that the size of the component $G_{n}^i$ scaled down by $n^{\frac{\alpha}{\alpha+1}}$ converges to a random variable $\zeta_i$ for each $i\ge 1$, which in fact can be seen to be $\zeta_i =\inf \{t>0: \tilde{e}_i(t) = 0\}$. Conchon-Kerjan and Goldschmidt \cite{CKG.20} generalize Joseph's results and show that the graph $G_n^i$ itself has a scaling limit which is a random rooted compact measured metric space $\M_i = (\M_i,\ddd_i,\rho_i,\mu_i)$. Here $\ddd_i$ is a metric on $\M_i$, $\rho_i\in\M_i$ is a specified element and $\mu_i$ is a finite Borel measure on $\M_i$.

	This means not only does height profile (the number of infected people on day $h$) converge Theorem \ref{thm:Epidemic3}, but there is some limiting continuum structure of the components $G_n^i$ which is represented by these continuum spaces $(\M_i;i\ge 1)$. The standard construction of these continuum limits, first obtained in the critical \ER case in \cite{ABBG.10,ABBG.12}, are constructions in a depth-first manner: the spaces are obtained by gluing together $k$ pairs of points on a continuum random tree with a depth-first selection of the pairs. This gluing procedure changes the distance from the origin and therefore it is non-trivial to argue convergence of the height profiles similar to Theorem \ref{thm:Epidemic3} from the results of \cite{CKG.20}.
	
	The processes $\tilde{e}_i = (\tilde{e}_i(t); t\ge 0)$ in Theorem \ref{thm:Epidemic3} and the graph $\M_i$ are of a random length and mass, respectively. That is 
	\begin{equation*}
		\tilde{e}_i(t) > 0 \qquad \text{if and only if} \quad t\in (0,\zeta_i)
	\end{equation*} for some random $\zeta_i$ and, moreover, $\zeta_i= \mu_i(\M_i)$. This does complicate the analysis somewhat; however, conditionally given the values $(\zeta_i;i\ge 1)$ the excursions $\tilde{e}_i$ (resp. the spaces $\M_i$) are independent and are described by a scaling of an excursion (resp. metric measure space) of unit length (resp. unit mass) \cite{CKG.20}. Therefore in order to understand the scaling limit $Z_1 = (Z_1(t); t\ge 0)$ of a single macroscopic outbreak $Z_{n,1}$, we can study the structure of a the process $\tilde{e}_1$ conditioned on $\zeta_1 = 1$.
	
	To do this we let $\e = (\e(t);t\in[0,1])$ denote a standard excursion \cite{Chaumont.97} of a spectrally positive $\alpha$-stable L\'{e}vy process $X = (X(t);t\ge 0)$. To simplify our proofs, we will work with situation where the Laplace transform of $X$ satisfies \begin{equation}\label{eqn:Adef}
		\E\left[\exp(-\lambda X(t) ) \right] = \exp\left(A \lambda^\alpha t\right),\qquad \forall \lambda, t\ge 0,\quad \text{where}\quad
		A = \frac{c \Gamma(2-\alpha)}{\delta \alpha (\alpha-1)},
	\end{equation} for $c,\delta$ defined in \eqref{eqn:cdelta1}. We remark that this excursion depends on the value $A$; however, the results also hold for any value of $A$ by using scaling properties of L\'{e}vy processes and their associated height processes. 
	
	We also recall from above that $\M_i$ are obtained by gluing together a finite collection of pairs of points in a continuum tree. This is the surplus of the continuum random graph $\M_i$. We will let $\GG^{(\alpha,k)} = (\GG^{(\alpha,k)}, d,\rho,\mu)$ denote the graph $\M_1$ conditioned on $\mu_1(\M_1) = 1$ and having surplus $k$. A precise construction of this object will be delayed until Section \ref{sec:realGraphs}, but it suffices to say that it will be constructed from an excursion $\e^{(k)} = (\e^{(k)}: t\in[0,1])$ defined by the polynomial tilting
	\begin{equation}\label{eqn:ktilt}
		\E\left[ f(\e^{(k)};t\in[0,1]) \right] \propto \E\left[ \left(\int_0^1 \e(t)\,dt\right)^{k} f(\e;t\in[0,1])\right].
	\end{equation} 
	
	The continuum object $\GG^{(\alpha,k)}$, in our case, represents the limiting structure of the history of the disease spread. With this, we can ask several questions in the hope that this will shed light on the structure of $M_n(\nu)$. What is the structure of the disease outbreak, i.e. what is the height profile of the graph $\GG^{(\alpha,k)}$? When does a uniformly chosen person get infected, or when do a finite number of uniformly chosen individuals get infected? When does the outbreak die out, that is, when is the last person infected? In terms of the graph $\GG^{(\alpha,k)}$ this is asking what is the radius of the graph $\GG^{(\alpha,k)}$. What's the most number of people infected at any one time or, in terms of the continuum graph, what is the distribution of the width of $\GG^{(\alpha,k)}$? There are many more different questions we can ask, but we have answers to these questions. 
	
	We start by answer the first question: what is the height profile of the graph $\GG^{(\alpha,k)}$, from which the others will follow by analysis of an integral equation. 
	
	\begin{thm}\label{thm:ctsAlpha}
		Fix $k\ge 0$, and $\alpha\in(1,2)$. Let $\GG^{(\alpha,k)}$ be the $\alpha$-stable continuum random graph, constructed from a spectrally positive L\'{e}vy process with Laplace exponent \eqref{eqn:Adef} and rooted at a point $\rho\in \GG^{(\alpha,k)}$.
		\begin{enumerate}
			\item Let $B(x,t)$ is the closed ball of radius $t$ centered at $x$. The process $\ccc = (\ccc(t); t\ge 0)$ defined by $\ccc(t) = \mu(B(\rho,t))$ is absolutely continuous and
			\begin{equation*}
				\ccc(t) = \int_0^t \z(s)\,ds
			\end{equation*} for a c\`adl\`ag process $\z = (\z(t);t\ge 0)$.
			
			\item The process $(\z(t);t\ge 0) \overset{d}{=} (z(t);t\ge 0)$ where $z$ is the unique c\`adl\`ag solution to
			\begin{equation*}
				z(t) = \e^{(k)}\left( \int_0^t z(s)\,ds\right) ,\qquad \inf\left\{t>0: \int_0^t z(s)\,ds >0\right\} = 0.
			\end{equation*}
		\end{enumerate}
	\end{thm}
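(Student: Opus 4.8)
The plan is to identify $\z$ as the limiting object in Theorem \ref{thm:Epidemic3}, restricted to the first component and conditioned on the appropriate mass and surplus, and then transfer the Lamperti-type equation through this weak limit. More concretely, I would argue in three stages. First, recall that by the results of \cite{CKG.20} the rescaled component $n^{-\frac{\alpha}{\alpha+1}} G_n^1$ — equipped with its rescaled counting measure and its root $\rho_n^1$ — converges in the \ghp sense, jointly with its surplus, to $\M_1$; and conditionally on $\{\mu_1(\M_1)=1\}$ and on the surplus equalling $k$, the limit is exactly $\GG^{(\alpha,k)}$. The key point is that \ghp convergence of rooted measured metric spaces, together with convergence of the roots, implies convergence of the ``mass-of-ball-around-root'' profiles $t\mapsto \mu_n(B(\rho_n^1,t))$, suitably rescaled, to $\ccc(t) = \mu(B(\rho,t))$ in the Skorohod sense at every continuity point of the limit. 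But $\mu_n(B(\rho_n^1, t))$ is, up to the rescaling, precisely $\sum_{h\le t} Z_{n,1}(h)$, i.e. the partial sums of the discrete height profile. Hence the scaling limit of these partial sums, under the conditioning, is on one hand $\ccc$ and on the other hand — by Theorem \ref{thm:Epidemic3} and the remark that conditionally on $(\zeta_i)$ the excursions $\tilde e_i$ are scaled unit-length excursions built from $\e^{(k)}$ — the integral $C_1(t) = \int_0^t Z_1(s)\,ds$ of the process $Z_1 = \tilde e_1\circ C_1$. This gives part (1): $\ccc$ is absolutely continuous with density $\z$, where $\z \overset{d}{=} Z_1$ conditioned on $\zeta_1=1$ and surplus $k$, which is $z$ solving the stated fixed-point equation with kernel $\e^{(k)}$ in place of $\tilde e_1$.

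For part (2), once $\z$ is identified in distribution with the conditioned limit process $Z_1$, I would invoke the explicit description of the conditioned excursion. By the discussion around \eqref{eqn:ktilt}, conditioning $\tilde e_1$ on $\zeta_1 = 1$ and surplus $k$ amounts (after removing the exponential tilt, which only affects the unconditioned law of the excursion length, not its shape given the length) to replacing $\tilde e_1$ by $\e^{(k)}$, the polynomially tilted standard $\alpha$-stable excursion of length $1$. The defining relation $Z_1(t) = \tilde e_1(C_1(t))$, $C_1(t)=\int_0^t Z_1(s)\,ds$, then becomes $z(t) = \e^{(k)}(\int_0^t z(s)\,ds)$ under the conditioning, and the nondegeneracy condition $\inf\{t : C_1(t)>0\}=0$ is inherited from the corresponding condition in Theorem \ref{thm:Epidemic3}. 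Uniqueness of the càdlàg solution to this Lamperti-type equation is exactly the uniqueness already asserted in Theorem \ref{thm:Epidemic3} (specialized to one excursion), so it may be quoted.

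The main obstacle I anticipate is the first stage: justifying that \ghp convergence of the components, plus convergence of roots, actually yields Skorohod convergence of the ball-mass profiles, and that the discrete ball-mass profile $\sum_{h\le \cdot} Z_{n,1}(h)$ matches — under the same coupling — the limit arising from Theorem \ref{thm:Epidemic3}. The subtlety is that Theorem \ref{thm:Epidemic3} is proved by a direct exploration/Lamperti argument on the configuration model, whereas the metric-space convergence of \cite{CKG.20} is proved via a depth-first gluing construction, and one must check these two descriptions of the limit are driven by the \emph{same} excursion (up to the conditioning and the tilt). Concretely I would handle this by showing both limits are measurable functionals of the same scaling limit of the exploration walk: the height profile is read off the height process associated to $\tilde X$, and the metric on $\M_1$ is the quotient pseudometric from that same height process together with the (independent, depth-first) surplus identifications — so the ball of radius $t$ around the root has mass equal to the occupation measure of the height process below level $t$, which is $\int_0^t \tilde e_1(s)\,ds$ reparametrized, matching $C_1$. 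A secondary technical point is the conditioning: one must argue that conditioning on $\{\mu_1(\M_1)=1\}$ and surplus $k$ is compatible with passing to the limit (no loss of mass, surplus is a continuous functional in the limit), which follows from the joint convergence of $(G_n^1, \text{surplus}, \#G_n^1)$ already available in \cite{CKG.20}. Everything else — absolute continuity of $\ccc$, càdlàg regularity of $\z$, and uniqueness — is then inherited from the structure of $Z_1$ in Theorem \ref{thm:Epidemic3}.
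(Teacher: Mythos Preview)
Your overall strategy is essentially the paper's: establish joint convergence of the rescaled graphs (in the \ghp sense, from \cite{CKG.20}), the breadth-first walks, the Lamperti pairs $(Z_i,C_i)$, and the surpluses; identify $C_i(t)$ with the ball mass $\mu_i(B(\rho_i,t))$; then condition on $\zeta_1=1$ and surplus $k$ to pass from $\tilde e_1$ to $\e^{(k)}$. The paper packages this joint convergence as Lemma \ref{lem:20}, and the conditioning step is exactly your part (2).

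However, your proposed resolution of the ``main obstacle'' has a real gap. You write that both limits can be read off ``the same scaling limit of the exploration walk'', but this is not so: the metric space $\M_1$ (and its height process $\tilde h_1$) is built from the \emph{depth-first} walk, while the Lamperti description $(Z_1,C_1)$ in Theorem \ref{thm:Epidemic3} comes from the \emph{breadth-first} walk. These are equal in distribution but are not the same process, so one cannot simply superpose the two limits. Moreover, your claim that ``the ball of radius $t$ around the root has mass equal to the occupation measure of the height process below level $t$, which is $\int_0^t \tilde e_1(s)\,ds$ reparametrized'' conflates two different objects: the occupation measure $\int_0^{\zeta_1} 1_{[\tilde h_1(s)\le t]}\,ds$ and the Lamperti integral $C_1(t)$. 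Their equality is a Jeulin-type identity which is essentially the content of the theorem --- invoking it here is circular. (And the gluing in $\M_1$ shortens distances from the root, so the ball mass in the graph is not even the occupation measure of the tree height process.)

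The paper's actual resolution is different and avoids this circularity. It uses the coupling of Lemma \ref{lem:convBFDFequal}: the breadth-first and depth-first walks can be coupled so that their excursion intervals and backedge marks coincide. Under this coupling one takes joint subsequential limits (Lemma \ref{lem:20}) of the BF excursions $X_{n,i}^\BF$, the rescaled graphs $G_n^i$, and the discrete profiles. The key elementary observation is that the \emph{discrete} ball-mass profile $C_{n,i}(h)=\#\{v:d(\rho_n^i,v)\le h\}$ differs from the Lamperti partial sum $C^*_{n,i}$ (built from $X_{n,i}^\BF$) only by twice the number of backedges, which is tight; hence both have the same limit. One limit is $\mu_i(B(\rho_i,t))$ (via GHP convergence and Lemma \ref{lem:ballCont}), the other is $C_i(t)$ (the Lamperti transform of $\tilde e_i^*$). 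This identifies them without ever reading the ball mass off the depth-first height process.
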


	We can now answer all of the other questions once we know the height profile.
	\begin{cor}\label{cor:1}
		\begin{enumerate}
			\item The radius of the graph $\GG^{(\alpha,k)}$ is given by
			\begin{equation*}
				\sup_{v\in \GG^{(\alpha,k)}} d(\rho,v) \overset{d}{=} \int_0^1 \frac{1}{\e^{(k)}(s)}\,ds.
			\end{equation*}
			
			\item The width of the graph $\GG^{(\alpha,k)}$ is given by
			\begin{equation*}
				\sup_{t\ge 0} \z(t) \overset{d}{=} \sup_{t\in[0,1]} \e^{(k)}(t).
			\end{equation*}
			\item Let $V\in \GG^{(\alpha,k)}$ be distributed according to the mass measure $\mu$, and let $U$ denote a uniform random variable on $(0,1)$. Then
			\begin{equation*}
				d(\rho,V) \overset{d}{=} \int_0^U \frac{1}{\e^{(k)}(s)}\,ds.
			\end{equation*}
			
			\item More generally, for any $n\ge 1$, let $V_1,\dotsm, V_n$ denote random points distributed according to $\mu$ on $\GG^{(\alpha,k)}$. Let $R_{(1)}\le R_{(2)}\le\dotsm R_{(n)}$ denote the order statistics of $d(\rho,V_1),\dotsm, d(\rho,V_n)$. Let $U_{(1)}\le U_{(2)}\le \dotsm \le U_{(n)}$ denote the order statistics for an i.i.d. sample of $n$ uniform random variables. Then
			\begin{equation*}
				\left(R_{(1)},\dotsm, R_{(n)}\right) \overset{d}{=} \left(\int_0^{U_{(1)}}\frac{1}{\e^{(k)}(s)}\,ds,\dotsm, \int_0^{U_{(n)}} \frac{1}{\e^{(k)}(s)}\,ds \right).
			\end{equation*}
		\end{enumerate}
	\end{cor}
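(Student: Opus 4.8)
The plan is to read off all four statements from Theorem~\ref{thm:ctsAlpha} by analysing the Lamperti-type time change $C(t)=\int_0^t z(s)\,ds$ associated to the c\`adl\`ag solution $z$ of the integral equation there, together with the elementary observation that, for a $\mu$-distributed point $V$, the conditional law of $d(\rho,V)$ given $\GG^{(\alpha,k)}$ has distribution function $t\mapsto \mu(B(\rho,t))=\ccc(t)$.

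First I would establish the structure of the pair $(z,C)$. Since $z\ge 0$ and $z(t)=\e^{(k)}(C(t))$ with $\e^{(k)}(u)>0$ exactly for $u\in(0,1)$, the function $C$ is non-decreasing, satisfies $C(0)=0$ and becomes positive immediately by the non-degeneracy condition, is strictly increasing up to $\sigma:=\inf\{t:C(t)=1\}=\sup\{t:z(t)>0\}$, and is constant equal to $1$ afterwards. Transporting this structure to $\z$ via $\z\overset{d}{=}z$ and combining part~(1) of Theorem~\ref{thm:ctsAlpha} with the a.s.\ compactness of $\GG^{(\alpha,k)}$ and the non-atomicity and full support of $\mu$ from \cite{CKG.20}, one obtains $\ccc(0)=0$, $\ccc(t)<1$ for $t<\sigma$, and $\ccc(\sigma)=1$ with $\sigma<\infty$ a.s.; hence $C\colon[0,\sigma]\to[0,1]$ is a continuous strictly increasing bijection, its inverse $C^{-1}$ is absolutely continuous with $(C^{-1})'(u)=1/z(C^{-1}(u))=1/\e^{(k)}(u)$, and so $C^{-1}(u)=\int_0^u 1/\e^{(k)}(s)\,ds$. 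Since the radius of $\GG^{(\alpha,k)}$ equals $\sigma$, and $\sigma$ equals $C^{-1}(1)$ in law, part~(1) follows.

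Part~(2) is then immediate: $\z$ vanishes off $[0,\sigma]$ and on $[0,\sigma]$ equals $\e^{(k)}\circ C$ in law, and since $C$ maps $[0,\sigma]$ continuously onto $[0,1]$ the two paths have the same range, whence $\sup_{t\ge0}\z(t)\overset{d}{=}\sup_{t\in[0,1]}\e^{(k)}(t)$. For parts~(3) and~(4) I would use the probability integral transform: conditionally on $\GG^{(\alpha,k)}$, the continuous strictly increasing function $\ccc$ is the distribution function of $d(\rho,V)$, so $U:=\ccc(d(\rho,V))$ is uniform on $(0,1)$ and, crucially, independent of $\GG^{(\alpha,k)}$ because its conditional law does not depend on the graph. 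Thus $d(\rho,V)=\ccc^{-1}(U)\overset{d}{=}C^{-1}(U)=\int_0^U 1/\e^{(k)}(s)\,ds$, which is~(3); here the replacement of $\ccc^{-1}$ by $C^{-1}$ is justified by $\z\overset{d}{=}z$ and the independence of $U$ from the graph. For~(4), conditionally on the graph the variables $U_i:=\ccc(d(\rho,V_i))$ are i.i.d.\ uniform and jointly independent of $\GG^{(\alpha,k)}$; since $\ccc^{-1}$ is increasing it sends order statistics to order statistics, so $R_{(j)}=\ccc^{-1}(U_{(j)})$, and again substituting $C^{-1}$ for $\ccc^{-1}$ yields the stated joint identity.

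The one genuinely delicate point is the endpoint analysis of the time change in the second paragraph: that $\sigma<\infty$ a.s.\ (equivalently that $1/\e^{(k)}$ is integrable near $0$ and $1$) and that $\ccc$ is a bona fide continuous strictly increasing distribution function on $[0,\sigma]$. I expect to derive the first from the a.s.\ compactness of $\GG^{(\alpha,k)}$ in \cite{CKG.20} and the second from the non-atomicity and full support of the mass measure there; granted these inputs, the corollary is a formal consequence of Theorem~\ref{thm:ctsAlpha}.
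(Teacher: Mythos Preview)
Your proposal is correct and follows essentially the same route as the paper: both derive all four items from Theorem~\ref{thm:ctsAlpha} via the Lamperti inverse formula $C^{-1}(u)=\int_0^u 1/\e^{(k)}(s)\,ds$ and the probability integral transform for parts~(3)--(4). The only cosmetic difference is in how the radius identity is completed: the paper first obtains $\sup_{v\in\operatorname{spt}(\mu)}d(\rho,v)$ from Proposition~\ref{prop:AUB}(2)(b) and then upgrades to $\sup_{v\in\GG^{(\alpha,k)}}$ by appealing to density of the leaves in both $\operatorname{spt}(\mu)$ and $\GG^{(\alpha,k)}$, whereas you invoke full support of $\mu$ and compactness of $\GG^{(\alpha,k)}$ from \cite{CKG.20} to reach the same conclusion; these are equivalent justifications of the same fact.
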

	
	After discussing the integral equation involved in the statement of part 2 of Theorem \ref{thm:ctsAlpha}, which is called the Lamperti transform in the literature, we will show how Corollary \ref{cor:1} follows from Theorem \ref{thm:ctsAlpha} in Section \ref{sec:lamperti}.

	\subsection{Relation to other works and proof structure}
	
	Epidemics on random graphs are important for many areas in the applied sciences, see \cite{BBPV.05,Volz.08,VM.07,PSCMV.15} and references therein for a non-exhaustive collection of such works. One difficulty in describing the limiting behavior comes from analyzing the influence that the specific degree distribution has on the local structure of the graph.
	
	One approach to overcoming this issue is by using a \textit{mean-field} approach \cite{BBPV.05}. A typical approximation is in continuous time where each infected vertex $v$ is infected for an exponential time, and infects its neighbors at independent exponential rates. On homogeneous networks the behavior of $Z(t)$, the number of people infected at time $t\in\R_+$, is modeled by the ordinary differential equation \begin{equation*}
		\frac{dz(t)}{dt} = \lambda \mu  z(t) (1-z(t)) \qquad \text{where}\quad z(t) = \frac{1}{n}Z(t).
	\end{equation*} A more careful analysis can be done on heterogeneous networks, where one can track the proportion of vertices with degree $k$ infected at a certain time. A remarkable thing is that this approach, while losing a lot of information about the specific local structure, it can be used to find heuristics on the proper scaling of the graphs or epidemic, see \cite{PSCMV.15}.
	
	A more detailed approach to studying heterogeneous networks was taken by Volz in \cite{Volz.08}, and rigorously proved in \cite{DDMT.12} under a fifth moment condition. In these works the population of size $n$ is broken into 3 compartments - the susceptible, the infected and the recovered -  and individuals in one compartment are moved to another compartment (i.e. an infected individual recovers or a susceptible individual is infected) at certain exponential rates. The global changes in the proportional size of the outbreak is described, to first order, by just the size of the respective compartments and the degree distribution. Here the limiting structure is described by a system of deterministic ordinary differential equations, which depend on the degree distribution. Deterministic limiting equations, perhaps after some random time $T_0$, were also obtained in \cite{JMW.14} under a second moment condition.
	
	Our approach is different and takes it idea from studies of height profiles of random trees and branching processes. Particularly, we focus on the approach implicit in \cite{AMP.04}, and later studied in \cite{CPGUB.13,CPGUB.17,AUB.20} and we use the so-called Lamperti transform. This transform was originally used for a path-by-path bijection between continuous state branching processes and a certain class of L\'{e}vy processes. This transform was originally stated by Lamperti in \cite{Lamperti.67}, but was proved later by Silverstein \cite{Silverstein.67}. See also \cite{CLU.09}.
	
	We can describe the discretized version in our situation as follows. Instead of looking at the total number of people infected on day $h$, we look at the number of individuals that person $v_j$ infects, when $v_j$ is the $j^\text{th}$ individual who contracts the disease. This corresponds to a breadth-first ordering of the underlying connected component of the graph. Call this number of newly infected individuals $\chi_j$, and let $X$ be the breadth-first walk
	\begin{equation*}
		X(k) = \sum_{j=1}^k (\chi_j-1).
	\end{equation*}
	It was this walk on the \ER random graph that Aldous used in \cite{Aldous.97} to describe the scaling limits of the component sizes of $G(n,p)$ in the critical window, and an analogous walk was used by Joseph \cite{Joseph.14} for the configuration model.
	
	An interesting property of this walk is that the number of people infected on day $h$ solves the difference equation
	\begin{equation*} 
		Z(h) = Z(0) + X\left( C(h-1)\right),\qquad C(h) = \sum_{j= 0}^h Z(j).
	\end{equation*} As far as the author is aware, the first instance of this identity can be found in \cite{AMP.04} with a slightly more complicated formulation. See the Introduction of \cite{CPGUB.13} for a proof of this equality.
	The authors of \cite{CPGUB.13,CPGUB.17} studied the scaled convergence of solutions of the above equation (with the addition of an immigration term) to its continuum analog 
	\begin{equation*}
		Z(t) =X\left( \int_0^t Z(s)\,ds\right).
	\end{equation*}
	Unfortunately, there is not a unique solution to this integral equation when $X(0) = 0$ and so proving weak convergence is quite difficult. For certain models of random trees one can prove a weak convergence result \cite{AMP.04,Kersting.11,AUB.20}, and also works for the \ER random graph when $X(0) > 0$ \cite{Clancy.20}.
	
	We overcome the uniqueness problem by arguing that the rescaled processes $Z_{n,i}(h)$ are tight, and we further show that each subsequential weak limit must be of a particular form. This approach to overcoming the uniqueness problem was used in \cite{AUB.20} to study trees with a certain degree distribution, as opposed to graphs with a given degree distribution in the present situation. While, at first, these two discrete models may seem related, the proofs are quite different. In \cite{AUB.20}, the authors use a combinatorial transformation of the tree to show that that subsequential limits must be of a particular form. We, instead, show that this follows automatically once we know the the underlying graph converges to a measured metric space. In turn, in Section \ref{sec:disc} we discuss how our abstract convergence results described in Section \ref{sec:weakGen} can be applied to the rank-1 inhomogeneous model \cite{BDW.18,BDW.20,AL.98,BvdHRS.18}.

	\section{General Weak Convergence Results}  \label{sec:weakGen}
	
	\subsection{General Weak Convergence Approach}
	
	Let us now discuss the general set up for our weak convergence arguments.
	In the introduction we discussed the epidemic, which can be realized as the height profile of a connected component of a random graph. Explicitly those graphs were viewed as a metric space, but we implicitly equipped them with the counting measure. We phrase our results in terms of more general measures on random graphs, which will likely be useful in inhomogeneous models in \cite{AL.98,BDW.18,BDW.20}. The epidemiological interpretation of considering non-uniform measures is not immediately clear; however, we could think of the unequal mass of vertices as measuring the size of a clique in a community which was reduced to single vertex.
	
	A major assumption of these results is the convergence of graphs as measured metric spaces. We delay a more detailed discussion of this topic until Section \ref{sec:ghp}. For now it suffices to say that we can equip the space $\XX$ of (equivalent classes) of pointed measured metric spaces with additional boundedness assumptions with a metric which turns $\XX$ into a Polish space. This metric is called the \ghp metric, and we will denote it by $d_\GHP$.
	
	We will denote a generic element of $\XX$ as $\M = (\M,\rho,\ddd,\mu)$ where $(\M,\ddd)$ is a metric space such that bounded sets have compact closure, $\rho\in \M$ is a specified point and $\mu$ is a Borel measure on $\M$ such that bounded sets have finite mass. For each $\alpha>0, \beta\ge 0$ define the scaling operation by
	\begin{equation*}
		\scale(\alpha,\beta)\M := (\M,\rho,\alpha\cdot \ddd,\beta\cdot \mu).
	\end{equation*}
	
	Now let $G$ denote a connected graph on, say, $n$ vertices, with $\rho\in G$ a specified vertex. We view $G$ as a measured metric space with graph distance and $\m$ a finite measure such that each vertex has strictly positive mass. As we did implicitly before, we explore the graph in a breadth-first manner. The precise way in which this is done can vary depending on the graph model, but we assume that the vertices are labeled by $v_1,\dotsm, v_n$ such that if $i< j$ then $d(\rho,v_i)\le d(\rho,v_j)$. This trivially implies that $\rho = v_1$. This labeling can be viewed as an indexing of each individual who gets infected, so that if person $B$ got infected after person $A$, then person $A$ has a smaller index than person $B$.
	
	We now discuss an underlying tree structure and breadth-first walk for the graph, which draws inspiration from the breadth-first tree and walk in \cite{AL.98}. The tree is constructed by looking at which vertices $v_i$ infects in the graph $G$. More formally, we will say that vertex $v_j$ is the child of $v_i$ if $\{v_i,v_j\}$ is an edge in $G$, but $\{v_l,v_j\}$ is \textit{not} an edge for all $l<i$. This implies $i<j$. In most models with a breadth-first exploration, $v_j$ will be a child of $v_i$ if vertex $v_j$ is discovered while exploring the vertex attached to $v_i$.
	
	We also suppose that there is some breadth-first walk $X^\BF_G$ as well:
	\begin{equation}\label{eqn:BFWass1}
		X^\BF_G(\tau(k)) = X_G^{\BF}(\tau(k-1)) - \m(v_k) + \sum_{u \text{ child of }v_k} \m(u),\qquad \tau(k) = \sum_{j\le k} \m(v_j),
	\end{equation} and with $X_G^\BF(0) = 0$. How the process $X^\BF_G$ behaves on the intervals $(\tau(k-1),\tau(k))$ will play no important role in this paper. The breadth-first walk used by Aldous and Limic in their classification of the multiplicative coalescence \cite{AL.98} satisfies equation \eqref{eqn:BFWass1}. Later an analogous walk satisfying \eqref{eqn:BFWass1} was used in \cite{AMP.04} to describe the inhomogeneous continuum random tree and extend Jeulin's identity \cite{JY.85}. When $\m$ is a uniform measure on $G$, this walk will be the breadth-first {\L}ukasiewicz path \cite{LeGall.05}.

	Importantly for us, the walk $X^\BF_G$ encodes the masses and tree structure of $v_1,\dotsm, v_n$. However there is no clean functional amenable to scaling limits which allows us to reconstruct the genealogical structure from this breadth-first walk.
	
	We now define the height profile of $G$ by
	\begin{equation*}
		Z_G(h) = \m\left\{v\in G: d(\rho,v) = h\right\}.
	\end{equation*} It will be useful to define its cumulative sum as well:
	\begin{equation*}
		C_G(h) = \sum_{j=0}^h Z_{G}(j) = \m\left\{ v\in G: d(\rho,v)\le h\right\}.
	\end{equation*}
	As observed in \cite[equations (13-14)]{AMP.04}, $Z_G(h)$ solves the following difference equation:
	\begin{equation}\label{eqn:discLamperti2}
		Z_G(h+1) = Z_G(0) + X_G^\BF \circ C_G(h).
	\end{equation}

	To describe what happens in the $n\to\infty$ limit, let $(G_n;n\ge 1)$ be a sequence of connected random graphs on a finite number of vertices, viewed as a measured metric spaces where $G_n$ is equipped with the measure $\m_n$. We write $X_n^\BF$ for the breadth-first walk $X_{G_n}^\BF$.
	We prove the following in Section \ref{sec:proofOfMainThm}.
	\begin{thm}\label{thm:conv1}
		Suppose that there exists a sequence $\alpha_n\to\infty$, and that $\gamma_n:=\m_n(G_n)\to\infty$ a.s. In addition assume:
		\begin{enumerate}
			\item In the Skorohod space $\D([0,1],\R)$, the following weak convergence holds
			\begin{equation*}
				\left(\frac{\alpha_n}{\gamma_n} X_n^\BF (\gamma_nt) ;t\in[0,1] \right) \weakarrow \left(X(t);t\in[0,1] \right),
			\end{equation*} where $X$ is a process such that almost surely $X(0) = X(1) = 0$, $X(t)>0$ for all $t\in(0,1)$ and $X(t)-X(t-)\ge 0$ for all $t$;
			\item There exists a random pointed measured metric space $\M = (\M,\rho,\ddd,\mu)$ which is locally compact and has a boundedly finite measure such that
			\begin{equation*}
				\scale(\alpha_n^{-1},\gamma_n^{-1})G_n\weakarrow \M,
			\end{equation*} weakly in the \ghp topology.
			\item For each $\eps>0$, $\mu(B(\rho,\eps)\setminus\{\rho\})> 0$ for all $\eps>0$.
			\item $\frac{\alpha_n}{\gamma_n}\sup_{v\in G_n} \m_n(v)\to 0$ as $n\to\infty$ in probability.
		\end{enumerate}
		
		Then
		\begin{enumerate}
			\item There is joint convergence in $\D(\R_+,\R)^2$:
			\begin{equation*}
				\left(\left(\frac{\alpha_n}{\gamma_n} Z_n(\fl{\alpha_n t});t\ge 0\right),\left( \frac{1}{\gamma_n} C_n(\fl{\alpha_nt});t\ge 0 \right)\right) \weakarrow \left((Z(t);t\ge 0),(C(t);t\ge 0)\right)
			\end{equation*} where $Z$ and $C$ are the unique c\`adl\`ag solution to
			\begin{equation}\label{eqn:limit1}
				Z(t) = X\circ C(t) ,\qquad C(t) = \int_0^t Z(s)\,ds,\qquad \inf\{t: C(t)>0\} = 0;
			\end{equation}
			\item The measure $\mu$ on $\M$ satisfies
			\begin{equation*}
				\left(\mu(B(\rho,t));t\ge 0 \right)\overset{d}{=} \left(C(t);t\ge 0\right)
			\end{equation*}
			
		\end{enumerate}
	\end{thm}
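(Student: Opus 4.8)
I organize the plan around three movements: a priori tightness, identification of every subsequential limit through the discrete Lamperti identity \eqref{eqn:discLamperti2}, and a uniqueness statement for \eqref{eqn:limit1} that upgrades this identification to genuine convergence. Abbreviate $W_n(u):=\frac{\alpha_n}{\gamma_n}X_n^\BF(\gamma_n u)$ and $g_n(t):=\frac{1}{\gamma_n}C_n(\fl{\alpha_n t})$. Two elementary remarks carry much of the weight. First, since graph distances are integers and $C_G(h)=\m(B(\rho,h))$, one has $g_n(t)=\mu_n^{\mathrm{sc}}(B(\rho,t))$, where $\mu_n^{\mathrm{sc}}$ is the probability measure of $\scale(\alpha_n^{-1},\gamma_n^{-1})G_n$; in particular $g_n$ is a deterministic functional of the scaled metric space. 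Second, \eqref{eqn:discLamperti2} rewrites as $\frac{\alpha_n}{\gamma_n}Z_n(h)=\frac{\alpha_n}{\gamma_n}\m_n(\rho)+W_n\!\big(\frac{1}{\gamma_n}C_n(h-1)\big)$ for $h\ge 1$, displaying the height profile as a time-change of $W_n$.

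First I would prove a uniform bound. Since $C_n(h-1)\le\gamma_n$, the rewriting above gives $M_n^T:=\sup_{0\le h\le \fl{\alpha_n T}}\frac{\alpha_n}{\gamma_n}Z_n(h)\le \frac{\alpha_n}{\gamma_n}\m_n(\rho)+\sup_{u\in[0,1]}|W_n(u)|$; by Assumption~(4) the first term tends to $0$ in probability, by Assumption~(1) the second converges weakly to $\sup_{[0,1]}|X|<\infty$, so $(M_n^T)_n$ is tight for each $T$. Because $g_n$ is non-decreasing with $g_n(t)-g_n(s)=\frac{1}{\alpha_n}\sum_{\fl{\alpha_n s}<j\le\fl{\alpha_n t}}\frac{\alpha_n}{\gamma_n}Z_n(j)\le (t-s+\alpha_n^{-1})M_n^T$, the family $(g_n)_n$ is tight in $\D(\R_+)$ and every subsequential limit is locally Lipschitz, hence continuous; the same identity also gives tightness of $(\frac{\alpha_n}{\gamma_n}Z_n(\fl{\alpha_n\cdot}))_n$.

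Next I would identify the limits. Pass to a subsequence along which the pair $(W_n,\ \scale(\alpha_n^{-1},\gamma_n^{-1})G_n)$ converges jointly in law to some $(\tilde X,\tilde\M)$ (possible, since each coordinate converges by Assumptions~(1)--(2)) and invoke Skorohod's representation theorem to make the convergence almost sure. A portmanteau argument (weak convergence of the probability measures, convergence of the basepoints, and squeezing $B(\rho_n,t)$ between $B(\rho,t\mp\eps_n)$) gives $g_n(t)\to\mu^{\tilde\M}(B(\tilde\rho,t))$ at every $t$ with $\mu^{\tilde\M}(\{\ddd(\tilde\rho,\cdot)=t\})=0$, hence at all but countably many $t$; since $g_n$ and the limit are monotone and the limit is continuous by the previous step, this upgrades to $g_n\to\tilde C:=\mu^{\tilde\M}(B(\tilde\rho,\cdot))$ locally uniformly. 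As $\tilde C$ is continuous and non-decreasing, the composition map is continuous at $(\tilde X,\tilde C)$, so, using the rewriting of \eqref{eqn:discLamperti2} and $\frac{\alpha_n}{\gamma_n}\m_n(\rho)\to 0$, one gets $\frac{\alpha_n}{\gamma_n}Z_n(\fl{\alpha_n\cdot})\to\tilde X\circ\tilde C=:\tilde Z$ (the shift by one in the argument of $C_n$ is negligible, perturbing the inner function by at most $M_n^T/\alpha_n\to 0$ uniformly on compacts). From $g_n(t)=\int_0^{(\fl{\alpha_n t}+1)/\alpha_n}\frac{\alpha_n}{\gamma_n}Z_n(\fl{\alpha_n u})\,du$, the bound on compacts, and the convergence just obtained, $\tilde C(t)=\int_0^t\tilde Z(s)\,ds$. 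Finally Assumption~(3) gives $\mu^{\tilde\M}(B(\tilde\rho,\eps))\ge\mu^{\tilde\M}(B(\tilde\rho,\eps)\setminus\{\tilde\rho\})>0$ for all $\eps>0$, i.e.\ $\inf\{t:\tilde C(t)>0\}=0$, and $\tilde Z(0)=\lim\frac{\alpha_n}{\gamma_n}\m_n(\rho)=0$. Hence $(\tilde Z,\tilde C)$ is a c\`adl\`ag solution of \eqref{eqn:limit1} driven by $\tilde X$, which has the law of $X$, and moreover $\tilde C=\mu^{\tilde\M}(B(\tilde\rho,\cdot))$.

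It remains to show \eqref{eqn:limit1} has a unique c\`adl\`ag solution for each path $X$ with $X(0)=X(1)=0$, $X>0$ on $(0,1)$, and non-negative jumps (extend $X$ by $0$ on $[1,\infty)$; these hypotheses force $X(1-)=0$). For any such solution $C$ is continuous, non-decreasing, satisfies $C(t)>0$ for $t>0$, and $C\le 1$; on the interval where $C(t)<1$ one has $Z(t)=X(C(t))>0$, so $C$ is strictly increasing there and has a continuous increasing inverse $\phi$ with $\phi(0)=0$ (here the entrance condition is used, since it forces $\{t:C(t)=0\}=\{0\}$). Since $C$ is absolutely continuous with $C'=Z>0$ a.e.\ on that interval, $C$ sends positive-measure sets to positive-measure sets, so $\phi$ is absolutely continuous with $\phi'(u)=1/Z(\phi(u))=1/X(u)$ a.e.; hence $\phi(u)=\int_0^u X(v)^{-1}\,dv$, which is independent of the solution, and once $C$ reaches the value $1$ it is frozen there with $Z\equiv0$. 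Thus the solution is unique, so the subsequential limit $(\tilde Z,\tilde C)$ equals the unique solution built from $\tilde X\overset{d}{=}X$; as this does not depend on the subsequence, $(\frac{\alpha_n}{\gamma_n}Z_n(\fl{\alpha_n\cdot}),\frac1{\gamma_n}C_n(\fl{\alpha_n\cdot}))\weakarrow(Z,C)$, which is the first assertion, while the second follows because $\frac1{\gamma_n}C_n(\fl{\alpha_n\cdot})$ also converges weakly to $\mu(B(\rho,\cdot))$ by the previous step and weak limits are unique. The main obstacle is this uniqueness step, exactly the difficulty flagged in the introduction: because $X$ is discontinuous the usual Gronwall/Lipschitz comparison is unavailable, so one inverts the time-change, and the delicate point is that $\phi$ has no singular component — which is precisely where $Z>0$ a.e.\ enters, itself a consequence of $X>0$ on $(0,1)$ together with the entrance condition. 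A secondary technical point is the order of operations in the identification step: the Lipschitz bound must be extracted before invoking the composition theorem, since that theorem needs the limiting volume profile to be continuous.
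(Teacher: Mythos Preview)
Your proof is correct and follows essentially the same route as the paper: tightness of the rescaled cumulative profile via the Lipschitz-type bound, identification of subsequential limits through the GHP convergence (your ``portmanteau argument'' is the content of the paper's Lemma~\ref{lem:measrueOfBalls}), the entrance condition $\inf\{t:C(t)>0\}=0$ from Assumption~(3), and then uniqueness of the Lamperti equation to conclude. The only genuine difference is that the paper outsources the uniqueness step to Proposition~\ref{prop:AUB} (taken from \cite{AUB.20}), whereas you reprove it from scratch by inverting the time-change and checking that $\phi=C^{-1}$ is absolutely continuous with $\phi'(u)=1/X(u)$; this is exactly the mechanism behind that proposition, so the arguments coincide in substance. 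For the composition step the paper invokes Wu's extension of Whitt's theorem \cite{Wu.08}; your appeal to continuity of the composition map at $(\tilde X,\tilde C)$ is correct but relies implicitly on $\tilde X$ being continuous at the endpoints $0$ and $1$ (the only possible flat stretches of $\tilde C$), which does follow from the hypotheses on $X$ and is worth making explicit.
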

	
	Let us make some important remarks on the assumptions in Theorem \ref{thm:conv1}. Assumption (1) is the convergence of the breadth-first walk, which is required in order to have a description of the limiting process $Z$ as described above, barring some stochastic analysis tools that can be used in particular cases \cite{Pitman.99}. Assumptions (2) and (3) are how we overcome any possible uniqueness problems that were identified in \cite{AUB.20} (see Proposition \ref{prop:AUB} below). Particularly, assumption (3) allows for the classification of the limit $C$ satisfying $\inf\{t:C(t)>0\} = 0$. Lastly, assumption (4) is so that the term $Z_n(0) \weakarrow 0$ as $n\to\infty$. Without this assumption we are left to deal with a simpler situation to which we can use the known weak convergence results in \cite{CPGUB.13}.

	As the reader may guess, this formulation will not be helpful for the proof of Theorem \ref{thm:Epidemic3} nor in the study of any of the macroscopic outbreaks for random graphs. Instead, the above theorem works only with a single macroscopic component. In order to prove Theorem \ref{thm:Epidemic3} we must develop a joint convergence result where each of the macroscopic components of a graph converge to some limiting graphs structure. This is something that appears quite often in the literature on continuum random graphs, dating back to the celebrated result of Addario-Berry, Broutin and Goldschmidt \cite{ABBG.12}. We now suppose that we have a sequence of graphs $(G_n;n\ge 1)$ on a finite number of vertices with a measure $\m_n$. For each $n$ we denote the connected components of $G_n$ as $(G_n^i;i\ge 1)$, ordered so that
	\begin{equation*}
		\m_n\left( G_n^1\right)\ge \m_n\left(G_n^2\right)\ge\dotsm .
	\end{equation*} Again, for convenience we will say that $G_n^i$ is a graph on a single vertex where the vertex has mass $0$ for all $i>K_n$. We view each of the components as a measured metric space with graph distance, and we select a vertex $\rho_n^i$ from each component to start the breadth-first walks. Here we write $X_{n,i}^\BF$ for the breadth-first walk on $G_n^i$ which, by assumption, satisfies equation \eqref{eqn:BFWass1} with the obvious notation changes. Additionally we extend it by constancy to be a function on all of $\R_+$:
	\begin{equation*}
		X_{n,i}^\BF(t) = X_{n,i}^\BF(\m_n(G_n^i)) \qquad \forall t\ge \m_n(G_n^i).
	\end{equation*}
	Let $Z_{n,i} = (Z_{n,i}(h))$ be the height profile of the $i^\text{th}$ component $G_n^i$. They solve an equation analogous to \eqref{eqn:discLamperti2} with the obvious notation change.
	
	We prove the following
	\begin{thm}\label{thm:conv2}
		Suppose there exists two sequence $\alpha_n\to\infty$ and $\gamma_n\to\infty$ such that
		\begin{enumerate}
			\item In the product Skorohod space $\D^\infty$ the following weak convergence holds:
			\begin{equation*}
				\left(\left(\frac{\alpha_n}{\gamma_n}X_{n,i}^\BF(\gamma_nt);t\ge 0\right);i\ge 1\right)\weakarrow \left( \left(X_i(t);t\ge 0 \right);i\ge 1\right)
			\end{equation*} where almost surely, $X_i$ does not possess negative jumps and there exists a $\zeta_i>0$ such that $X(t)>0$ if and only if $t\in(0,\zeta_i)$.
			
			\item There exists a sequence of pointed measured metric spaces $\M_i = (\M_i,\rho_i,d_i,\mu_i)$ which is locally compact and has a boudedly finite measure such that
			\begin{equation*}
				\left(\scale(\alpha_n^{-1},\gamma_n^{-1})G_n^i;i\ge 1\right) \weakarrow (\M_i;i\ge 1)
			\end{equation*} weakly in the product \ghp topology.
			\item Suppose that $\mu_i(B(\rho_i,\eps)\setminus \{\rho_i\})>0$ for all $\eps>0$.
			\item $\frac{\alpha_n}{\gamma_n}\sup_{v\in G_n} \m_n(v)\to 0$ as $n\to\infty$ in probability.
		\end{enumerate}
		
		Then
		\begin{enumerate}
			\item In the product Skorohod topology \begin{equation*}
				\left(\left(\frac{\alpha_n}{\gamma_n} Z_{n,i}(\fl{\alpha_nt}), \frac{1}{\gamma_n}C_{n,i}(\fl{\alpha_n t}); t\ge 0\right) ;i \ge 1\right)\weakarrow \left(\left( Z_i(t),C_i(t);t\ge 0\right);i\ge 1\right),
			\end{equation*} where $(Z_i,C_i)$ is the unique c\`adl\`ag solution to
			\begin{equation*}
				Z_i(t) = X_i\circ C_i(t),\qquad C_i(t) = \int_0^t Z_i(s)\,ds,\qquad \inf\{t: C_i(t)>0\} =0.
			\end{equation*}
			\item For each $i\ge 1$,
			\begin{equation*}
				\left(\mu_i(B(\rho_i,t));t\ge 0\right) \overset{d}{=}\left( C_i(t);t\ge 0\right).
			\end{equation*}
		\end{enumerate}
	\end{thm}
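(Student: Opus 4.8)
The plan is to deduce Theorem~\ref{thm:conv2} from the single-component Theorem~\ref{thm:conv1} applied separately to each component $G_n^i$, and then to upgrade the resulting marginal convergences to the joint statement. The first observation is that the normalisation one must plug into Theorem~\ref{thm:conv1} is not the deterministic $\gamma_n$ but the random total mass $\hat\gamma_n^i:=\m_n(G_n^i)$ of the $i$-th component, and that $\hat\gamma_n^i/\gamma_n\to\zeta_i$ in probability: for a connected graph the telescoping identity \eqref{eqn:BFWass1} forces $X_{n,i}^\BF$ to be constant equal to $-\m_n(\rho_n^i)$ on $[\hat\gamma_n^i,\infty)$, and $\tfrac{\alpha_n}{\gamma_n}\m_n(\rho_n^i)\to0$ by assumption~(4), so the limit walk $X_i$ in assumption~(1) — positive exactly on $(0,\zeta_i)$ and zero afterwards — can only be consistent with $\hat\gamma_n^i/\gamma_n\to\zeta_i$. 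For each fixed $i$ (so that $i\le K_n$ eventually and $\hat\gamma_n^i\to\infty$ a.s.), one then verifies the four hypotheses of Theorem~\ref{thm:conv1} for $G_n^i$ with normalisations $(\alpha_n,\hat\gamma_n^i)$: writing $\tfrac{\alpha_n}{\hat\gamma_n^i}X_{n,i}^\BF(\hat\gamma_n^i t)=\tfrac{\gamma_n}{\hat\gamma_n^i}\,\tfrac{\alpha_n}{\gamma_n}X_{n,i}^\BF\!\big(\gamma_n\tfrac{\hat\gamma_n^i}{\gamma_n}t\big)$ and using that the Skorohod topology is stable under composition with the convergent linear time changes $t\mapsto(\hat\gamma_n^i/\gamma_n)t$ and under the scalar factors $\gamma_n/\hat\gamma_n^i\to\zeta_i^{-1}$, assumption~(1) yields hypothesis~(1) of Theorem~\ref{thm:conv1} with limit the unit-length excursion $\hat X_i(t):=\zeta_i^{-1}X_i(\zeta_i t)$; hypothesis~(2) follows from $\scale(\alpha_n^{-1},(\hat\gamma_n^i)^{-1})G_n^i=\scale(1,\gamma_n/\hat\gamma_n^i)\,\scale(\alpha_n^{-1},\gamma_n^{-1})G_n^i\weakarrow\scale(1,\zeta_i^{-1})\M_i$ by continuity of $\scale$; hypotheses~(3)--(4) are inherited from assumptions~(3)--(4) (the last because $\sup_{v\in G_n^i}\m_n(v)\le\sup_{v\in G_n}\m_n(v)$). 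Theorem~\ref{thm:conv1} then produces, for each fixed $i$, the convergence of $\big(\tfrac{\alpha_n}{\hat\gamma_n^i}Z_{n,i}(\fl{\alpha_n t}),\tfrac1{\hat\gamma_n^i}C_{n,i}(\fl{\alpha_n t})\big)$ to the unique c\`adl\`ag solution $(\hat Z_i,\hat C_i)$ of the Lamperti equation driven by $\hat X_i$, plus $(\hat\mu_i(B(\rho_i,t)))_t\overset{d}{=}(\hat C_i(t))_t$; multiplying by $\hat\gamma_n^i/\gamma_n\to\zeta_i$ and using the scaling covariance of the Lamperti equation (if $(\hat Z,\hat C)$ solves the equation for $\hat X=\zeta^{-1}X(\zeta\cdot)$ then $(\zeta\hat Z,\zeta\hat C)$ solves it for $X$, and uniqueness transfers along this bijection) gives the marginal form of conclusion~(1) with $(Z_i,C_i):=(\zeta_i\hat Z_i,\zeta_i\hat C_i)$, and, since $\hat\mu_i=\zeta_i^{-1}\mu_i$, conclusion~(2).

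For the joint statement, recall that convergence in the product Skorohod topology amounts to joint convergence of every finite block $i\in\{1,\dots,m\}$. Here I would invoke the subsequence principle: the laws of $\big((\tfrac{\alpha_n}{\gamma_n}X_{n,i}^\BF(\gamma_n\cdot))_{i\le m},(\scale(\alpha_n^{-1},\gamma_n^{-1})G_n^i)_{i\le m}\big)$ form a tight family (a finite product of tight families), so along any subsequence a further subsequence converges jointly, and we may pass to a probability space on which this convergence, together with $\hat\gamma_n^i/\gamma_n\to\zeta_i$, holds almost surely. On this space the argument underlying Theorem~\ref{thm:conv1} — tightness of the rescaled profiles followed by the identification of every subsequential limit with the \emph{unique} solution of the Lamperti equation determined by the (now a.s.\ convergent) driving walk and the (now a.s.\ convergent) limit space, hypothesis~(3) fixing the condition $\inf\{t:C_i(t)>0\}=0$ — applies to the finitely many $i\le m$ at once and yields the almost sure joint convergence $\big(\tfrac{\alpha_n}{\gamma_n}Z_{n,i}(\fl{\alpha_n t}),\tfrac1{\gamma_n}C_{n,i}(\fl{\alpha_n t});t\ge0\big)_{i\le m}\to(Z_i,C_i)_{i\le m}$. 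Because each $(Z_i,C_i)$ is a measurable functional of $X_i$ alone while the joint law of $(X_i)_{i\le m}$ is prescribed by assumption~(1), the limit is independent of the subsequence and the coupling; hence the full sequence converges, and letting $m\to\infty$ gives conclusion~(1).

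The \emph{main obstacle} is precisely this passage from the single-component result to the joint one: it relies on knowing that the proof of Theorem~\ref{thm:conv1}, when carried out on a coupling where its hypotheses hold almost surely, delivers \emph{almost sure} — not merely distributional — convergence of its conclusions, so that these can be glued across the finitely many components, and that this be robust to the random linear time changes $\hat\gamma_n^i/\gamma_n$. This is available exactly because the limiting pair in Theorem~\ref{thm:conv1} is characterised uniquely rather than only as a subsequential limit, so the proof is in substance ``tightness $+$ identification of all subsequential limits with one common, coupling-independent object,'' a structure stable under the above reductions; the remaining steps (the change of normalisation from $\hat\gamma_n^i$ back to $\gamma_n$ and the scaling covariance of the Lamperti transform) are routine.
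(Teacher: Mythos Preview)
Your proposal is correct, but it takes a more elaborate route than the paper. The paper's proof is essentially two lines: it observes that the argument establishing Theorem~\ref{thm:conv1} (Propositions~\ref{prop:jointCX} and~\ref{prop:jointCXwithLimit}, then the composition step) can be rerun verbatim for the finitely many components $G_n^1,\dotsc,G_n^N$ simultaneously, keeping the \emph{deterministic} normalisation $\gamma_n$ throughout. The only line that changes is the uniform bound \eqref{eqn:uniformlyBounded}: instead of $0\le\tilde C_n(t)\le 1$ one uses
\[
0\le \frac{1}{\gamma_n}C_{n,i}(\fl{\alpha_n t})\le \frac{1}{\gamma_n}\m_n(G_n^i)\le \frac{1}{\gamma_n}\m_n(G_n^1),
\]
and tightness of $\gamma_n^{-1}\m_n(G_n^1)$ (from assumption~(1) or~(2)) suffices for Proposition~\ref{prop:jointCX} to go through. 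Everything else --- the estimate $|\tilde C_n(t)-\tilde C_n(s)|\le (t-s+2\alpha_n^{-1})\|\tilde X_n\|$, the Skorokhod representation, the appeal to Lemma~\ref{lem:measrueOfBalls} and assumption~(3) to force $\Lambda=0$, and the composition lemma of Wu --- is unchanged and automatically joint over $i\le N$ once one works on a common probability space.

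Your approach instead renormalises each component by its random total mass $\hat\gamma_n^i=\m_n(G_n^i)$ so as to land exactly in the unit-length-excursion framework of Theorem~\ref{thm:conv1}, applies that theorem (partly as a black box, partly by reopening its proof on a Skorokhod coupling), and then undoes the renormalisation via the scaling covariance $(\hat Z,\hat C)\leftrightarrow(\zeta\hat Z,\zeta\hat C)$ of the Lamperti equation. This is sound --- you correctly identify that the proof of Theorem~\ref{thm:conv1} has the ``tightness plus unique identification'' structure needed to deliver almost sure convergence on a coupling, and your scaling computation is right --- but it introduces the detour through random time changes $t\mapsto(\hat\gamma_n^i/\gamma_n)t$ and the auxiliary convergence $\hat\gamma_n^i/\gamma_n\to\zeta_i$, none of which the paper needs. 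What your route buys is modularity: it genuinely reduces Theorem~\ref{thm:conv2} to Theorem~\ref{thm:conv1} rather than to its proof, and it makes explicit that $\mu_i(\M_i)=\zeta_i$. What the paper's route buys is brevity: one changed inequality, no random rescaling.
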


	\subsection{Compactness Corollaries}
	
	Let us begin with the first corollary, which follows from Theorem \ref{thm:conv1} and a result in \cite{AUB.20} recalled in Proposition \ref{prop:AUB} below.
	\begin{cor}\label{cor:6}
		If the hypotheses of Theorem \ref{thm:conv1} are met, then
		\begin{equation*}
			\int_{0+} \frac{1}{X(s)}\,ds <\infty \qquad a.s.
		\end{equation*}
	\end{cor}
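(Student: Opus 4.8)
I would derive the integrability $\int_{0+} \frac{1}{X(s)}\,ds < \infty$ from the conclusions of Theorem~\ref{thm:conv1} directly, using the Lamperti-type representation $Z = X\circ C$ together with $C(t) = \int_0^t Z(s)\,ds$ and $\inf\{t: C(t)>0\} = 0$. The key observation is that the solution $(Z,C)$ produced by Theorem~\ref{thm:conv1} satisfies $Z(t) > 0$ for all small $t>0$: indeed, since $C$ is non-decreasing, continuous, vanishes at $0$, and becomes positive immediately (by $\inf\{t:C(t)>0\}=0$), and since $X(s)>0$ for all $s\in(0,1)$, we get $Z(t) = X(C(t)) > 0$ once $0 < C(t) < 1$, i.e. for all sufficiently small $t > 0$. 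Hence $C$ is strictly increasing and $C^{1}$ on a neighbourhood of $0$, with $C'(t) = Z(t) = X(C(t))$.

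\smallskip
Given that, I would perform the change of variables $u = C(t)$ on the interval where $C$ is a strictly increasing $C^1$ bijection onto some $(0,\delta)$. Writing $t = C^{-1}(u)$ and $dt = du / Z(C^{-1}(u)) = du / X(u)$, one obtains, for a small fixed $a>0$ with $C(a) = \delta$,
\begin{equation*}
	a = \int_0^a dt = \int_0^a \frac{C'(t)}{X(C(t))}\,dt = \int_0^{\delta} \frac{du}{X(u)}.
\end{equation*}
Since $a < \infty$, this gives $\int_{0+} \frac{1}{X(u)}\,du < \infty$ almost surely. To make this rigorous one should be slightly careful about the change-of-variables formula since $C^{-1}$ need only be absolutely continuous a priori; but $C' = Z = X\circ C$ is c\`adl\`ag and strictly positive near $0$, so $C$ is genuinely $C^1$ with nowhere-vanishing derivative on a right-neighbourhood of $0$, and the classical inverse function theorem applies on that neighbourhood, giving a genuine $C^1$ inverse and justifying the substitution.

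\smallskip
The alternative route --- invoking Proposition~\ref{prop:AUB} from \cite{AUB.20} as the statement hints --- would instead extract the conclusion as a necessary condition recorded there: the existence of a c\`adl\`ag solution to the Lamperti equation with $\inf\{t:C(t)>0\}=0$ is \emph{equivalent} to the finiteness of $\int_{0+} 1/X(s)\,ds$, so the corollary is immediate from part~(1) of Theorem~\ref{thm:conv1}. I would present the self-contained change-of-variables argument as the main proof and remark that it also recovers (one direction of) Proposition~\ref{prop:AUB}.

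\smallskip
\textbf{Main obstacle.} The only subtlety is the interchange of ``$C$ is absolutely continuous with c\`adl\`ag derivative $Z$'' and ``$C$ is $C^1$ near $0$ with strictly positive derivative,'' which is needed to justify the substitution and to conclude that the integral on $(0,\delta)$ is exactly the finite quantity $a$. This hinges on showing $Z(t)>0$ for all small $t$, which in turn uses both $X>0$ on $(0,1)$ and the defining property $\inf\{t:C(t)>0\}=0$ --- precisely the place where assumption~(3) of Theorem~\ref{thm:conv1} was used to pin down the limit. Everything else is routine.
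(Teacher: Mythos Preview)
Your proposal is correct. The paper's own proof is precisely your ``alternative route'': it simply invokes Proposition~\ref{prop:AUB} (the result from \cite{AUB.20}), observing that since Theorem~\ref{thm:conv1} produces a solution $C$ with $\inf\{t:C(t)>0\}=0$, the contrapositive of part~(1) of Proposition~\ref{prop:AUB} forces $\int_{0+}1/X(s)\,ds<\infty$. Your main change-of-variables argument is in effect a self-contained proof of exactly that direction of Proposition~\ref{prop:AUB}, so the two approaches coincide in substance; yours just unpacks the cited result.

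One small inaccuracy worth fixing: you assert that because $Z = X\circ C$ is c\`adl\`ag and strictly positive near $0$, $C$ is ``genuinely $C^1$'' there. This is not true in general --- $Z$ can (and in the stable case will) have jumps, so $C$ has unequal one-sided derivatives at those times. What you actually need, and have, is that $C$ is absolutely continuous and strictly increasing on $(0,a)$ with $C'=Z>0$ a.e.; the change-of-variables formula for absolutely continuous monotone functions then gives $\int_{C(\varepsilon)}^{C(a)}\frac{du}{X(u)}=a-\varepsilon$ for each $\varepsilon\in(0,a)$, and you let $\varepsilon\downarrow0$ by monotone convergence. The $C^1$ claim is unnecessary and should be dropped.
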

	
	The above corollary avoids a hypothesis in Theorem 1 in \cite{AUB.20}, but this comes at the expense of assuming convergence in the \ghp topology of an underlying metric space, which is a difficult hypothesis to verify. The inverse of Corollary \ref{cor:6} is interesting, because it gives a necessary condition for convergence in the \ghp topology.
	
	For certain models of random trees and random graphs, determining compactness of the candidates for limiting metric space is difficult. This has been a particular problem for the inhomogeneous continuum random trees introduced by Aldous, Camarri and Pitman \cite{AP.00,CP.00}. These trees are characterized by a parameter $\theta = (\theta_0,\theta_1,\dotsm)$ and in \cite{AMP.04}, the authors showed that boundedness of the continuum random tree is equivalent to the almost sure finiteness of an integral $\int_{0}^{1} \frac{1}{X(s)}\,ds$. A question was posed in \cite{AMP.04} to develop useful criteria for compactness of the ICRT and determine if boundedness implied compactness. This problem was open for 16 years, but appears to be solved very recently in \cite{BlancRenaudie.20}.
	
	It is in this vein that we state the next corollary. It is a more abstract version of part (1) of Corollary \ref{cor:1} above, and follows from part (2) in Theorem \ref{thm:conv1} and Proposition \ref{prop:AUB}.
	
	\begin{cor}
		Let $\M$ and $X$ be as in Theorem \ref{thm:conv1}, assume the hypotheses of Theorem \ref{thm:conv1} are met, and let $\operatorname{spt}(\mu)\subset\M$ denote the topological support of the measure $\mu$. Then
		\begin{equation*}
			\sup_{v\in \operatorname{spt}(\mu)} d(\rho,v) \overset{d}{=} \int_0^1\frac{1}{X(s)}\,ds.
		\end{equation*}
	\end{cor}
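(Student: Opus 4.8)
The plan is to route everything through part~(2) of Theorem~\ref{thm:conv1}, which gives the equality in law of processes $(\mu(B(\rho,t));t\ge 0)\overset{d}{=}(C(t);t\ge 0)$, to rewrite $\sup_{v\in\operatorname{spt}(\mu)}d(\rho,v)$ as a hitting time of the left-hand process, and finally to evaluate the matching hitting time of $C$ by the Lamperti change of variables (as carried out in Section~\ref{sec:lamperti} for Corollary~\ref{cor:1}). The first, purely deterministic, step is the identity
\begin{equation*}
	\sup_{v\in\operatorname{spt}(\mu)}d(\rho,v)=\inf\{t\ge 0:\mu(B(\rho,t))=\mu(\M)\},
\end{equation*}
valid for any pointed measured metric space $(\M,\rho,\ddd,\mu)$ with $\mu$ finite and $\M$ separable (with $B(\rho,t)$ the \emph{closed} ball and both sides allowed to be $+\infty$). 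For ``$\le$'', if $t$ exceeds the left side then $\operatorname{spt}(\mu)\subseteq B(\rho,t)$ and, since separability gives $\mu(\M\setminus\operatorname{spt}(\mu))=0$, we get $\mu(B(\rho,t))=\mu(\M)$; for ``$\ge$'', if $t$ is strictly below the left side, pick $v\in\operatorname{spt}(\mu)$ with $d(\rho,v)>t$ and $\eps\in(0,d(\rho,v)-t)$, so that $B(v,\eps)$ has positive mass and is disjoint from $B(\rho,t)$, whence $\mu(B(\rho,t))<\mu(\M)$.

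Next I would identify the total mass. Monotonicity of $t\mapsto\mu(B(\rho,t))$ makes $\mu(\M)=\sup_t\mu(B(\rho,t))$ a measurable functional of the finite-dimensional marginals, so $\mu(\M)\overset{d}{=}\sup_t C(t)$; and from $Z=X\circ C$, $C(t)=\int_0^t Z(s)\,ds$, $\inf\{t:C(t)>0\}=0$ together with $X>0$ exactly on $(0,1)$ one reads off that $C$ increases strictly while $C(t)<1$ and is constant thereafter, so $C\le 1$ and $\sup_t C(t)=1$ a.s. (the latter because, by the computation below, $C$ reaches level $1$ at the a.s.\ finite time $\int_0^1\frac1{X(s)}\,ds$, finite near $0$ by Corollary~\ref{cor:6}); hence $\mu(\M)=1$ a.s. Now transfer the hitting time: for nondecreasing right-continuous $g$ the set $\{t:g(t)\ge 1\}$ is a closed half-line, so $\{\,\inf\{t:g(t)\ge 1\}\le s\,\}=\{g(s)\ge 1\}$ for every $s$. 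Applying this to $g(t)=\mu(B(\rho,t))$ (right-continuous precisely because the balls are closed and $\mu$ is finite) and to the continuous $g=C$, and recalling that in both cases $\sup g=1$ (up to a null set), so the hitting time of $\sup g$ equals $\inf\{t:g(t)\ge 1\}$, the one-dimensional marginal identity of Theorem~\ref{thm:conv1}(2) yields
\begin{equation*}
	\P(\inf\{t:\mu(B(\rho,t))=\mu(\M)\}\le s)=\P(\mu(B(\rho,s))\ge 1)=\P(C(s)\ge 1)=\P(\inf\{t:C(t)=1\}\le s)
\end{equation*}
for all $s\ge 0$; combined with the deterministic identity this gives $\sup_{v\in\operatorname{spt}(\mu)}d(\rho,v)\overset{d}{=}\inf\{t:C(t)=1\}$.

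It remains to compute $\inf\{t:C(t)=1\}$. On the interval $[0,\inf\{t:C(t)=1\})$ one has $C(t)\in(0,1)$, hence $Z(t)=X(C(t))>0$ and $C$ is strictly increasing there, so the substitution $u=C(s)$, $du=Z(s)\,ds=X(C(s))\,ds$ converts $\int_0^{\inf\{t:C(t)=1\}}ds$ into $\int_0^1\frac{du}{X(u)}$, with integrability near $u=0$ supplied exactly by Corollary~\ref{cor:6}. Thus $\inf\{t:C(t)=1\}=\int_0^1\frac1{X(s)}\,ds$ a.s., and with the previous step this proves $\sup_{v\in\operatorname{spt}(\mu)}d(\rho,v)\overset{d}{=}\int_0^1\frac1{X(s)}\,ds$.

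The step I expect to be the main obstacle is the hitting-time transfer: the functional ``first time a path hits its running supremum'' is not continuous for the Skorokhod topology, so it cannot simply be pushed through the weak convergence, and one must genuinely exploit the monotonicity and right-continuity of $t\mapsto\mu(B(\rho,t))$ to reduce it to one-dimensional events — which in turn forces a little care about whether open or closed balls are used in Theorem~\ref{thm:conv1}. A lesser point, handled above, is that a boundedly finite measure on a merely locally compact space need not a priori have unit total mass, so $\mu(\M)=1$ has to be argued rather than assumed.
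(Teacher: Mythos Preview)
Your proof is correct and follows the same route as the paper, which simply states that the corollary ``follows from part (2) in Theorem~\ref{thm:conv1} and Proposition~\ref{prop:AUB}'': you have written out the details behind that one-line citation, namely the deterministic identification of the support radius with the hitting time of the ball-mass process, the transfer via the process-level identity $(\mu(B(\rho,t));t\ge 0)\overset{d}{=}(C(t);t\ge 0)$, and the Lamperti change of variables that evaluates $\inf\{t:C(t)=1\}$ as $\int_0^1 1/X(s)\,ds$ (which is precisely conclusion~2(b) of Proposition~\ref{prop:AUB}).

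Two small remarks. First, your stated ``main obstacle'' is not one: Theorem~\ref{thm:conv1}(2) is an \emph{equality in distribution} of the processes, not a weak convergence, so any Borel-measurable functional of the path transfers directly without any continuity requirement; your reduction to one-dimensional events, while valid, is unnecessary. Second, when arguing $\mu(\M)=1$ you invoke finiteness of $\int_0^1 1/X$, but you only justify integrability near $0$; in fact $\sup_t C(t)=1$ regardless, since in case~2(a) of Proposition~\ref{prop:AUB} one still has $\lim_{t\to\infty}C(t)=1$, and in that case both sides of the corollary are $+\infty$, so the statement remains correct without assuming finiteness near $1$.
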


	\section{Preliminaries}\label{sec:prelims}

	\subsection{L\'{e}vy processes, height processes, excursions}\label{sec:levy}
	
	In this section we recall the construction of $\Psi$-height processes and their excursions. For more in depth discussion on the height processes and their excursions see the works of Le Gall, Le Jan and Duquense in \cite{DL.02,LL.98a,LL.98b}. For information about spectrally positive L\'{e}vy processes, see Bertoin's monograph \cite{Bertoin.96}.
	
	Let $X = (X(t);t\ge 0)$ denote a spectrally postive, i.e. no negative jumps, L\'{e}vy process, and let $-\Psi$ denote its Laplace transform:
	\begin{equation*}
		\E \left[\exp\left\{-\lambda X(t)\right\} \right] = \exp\left(t\Psi(\lambda)\right).
	\end{equation*} In order to discuss $\Psi$-height processes, we restrict our attention in this situation to have $\Psi$ is of the form
	\begin{equation*}
		\Psi(\lambda) = \alpha\lambda +\beta \lambda^2 + \int_{(0,\infty)} (e^{-\lambda r} - 1+ \lambda r1_{[r<1]})\,\pi(dr),
	\end{equation*} where $\alpha \ge 0$, $\beta \ge 0$, $(r\wedge r^2)\,\pi(dr)$ is a finite measure along with
	\begin{equation*}
		\beta>0 \qquad\text{or}\qquad \int_{(0,1)} r\pi(dr) = \infty.
	\end{equation*} The last assumption occurs if and only if the paths of $X$ have infinite variation almost surely.
	
	The $\Psi$-height process $H = (H(t);t\ge 0)$ is a way to give a measure (in a local time sense) to the set
	\begin{equation}\label{eqn:heightProcess}
		\{s\in[0,t]: X(s) = \inf_{s\le r\le t} X(r)\}.
	\end{equation}
	Slightly more formally, under the the additional assumption that
	\begin{equation*}
		\int_{1}^\infty \frac{1}{\Psi(\lambda)}\,d\lambda = \infty,
	\end{equation*} there exists a continuous process $H = (H(t);t\ge 0)$ such that for all $t\ge 0$ then
	\begin{equation} \label{eqn:limitDefH}
		H(t) = \lim_{\eps\to 0} \frac{1}{\eps}\int_0^t 1_{[X(s) - \inf_{r\in [s,t]} X(r)\le \eps]} \,ds,
	\end{equation} where the limit is in probability. See \cite{LL.98b} and \cite[Section 1.2]{DL.02} for more details. In the case where $\beta>0$, the process $H$ can be seen \cite[equation (1.7)]{DL.02} to satisfy
	\begin{equation*}
		H(t) = \frac{1}{\beta} \Leb \left\{\inf_{s\le r\le t} X(r): s\in[0,t]\right\}.
	\end{equation*} In particular, when $X$ is a standard Brownian motion then
	\begin{equation*}
		H(t) = 2 \left( X(t) - \inf_{s\le t} X(s) \right)
	\end{equation*} is twice a reflected Brownian motion.
	
	One can also do this same procedure to the excursions of $X$. That is, if $I(t) = \inf_{s\le t} X(s)$ is the running infimum of $X$ then the process $-I$ acts as a (Markovian) local time at level $0$ for the reflected process $X-I$ \cite[Chapter IV]{Bertoin.96}. Moreover, by looking at $T(y) = \inf\{t\ge 0: I(t) < -y\}$, we can talk about the excursions of $X-I$ between times $T(y-)$ and $T(y)$. As well \cite[Section 1.1.2]{DL.02}, it is possible to define the height process $H$ for the excursions of $X$ above its running infimum.  The associated excursion measure will be denoted by $N$. To avoid confusion, we will write $(e,h)$ for $(X,H)$ under the excursion measure $N$.

	\subsubsection{Stable Processes and Tilting}
	
	We now restrict our attention to the stable case where
	\begin{equation} \label{eqn:psiStable}
		\Psi(\lambda) = A \lambda^\alpha, \qquad\alpha\in(1,2].
	\end{equation} The process $X$ satisfies the scaling \cite{Bertoin.96}
	\begin{equation*}
		(X(t);t\ge 0) \overset{d}{=} \left(k^{-1/\alpha} X(kt);t\ge 0 \right),\qquad \forall k >0.
	\end{equation*} Similarly, the height process $H$ satisfies the scaling
	\begin{equation*}
		(H(t);t\ge 0) \overset{d}{=} \left(k^{(1-\alpha)/\alpha} H(kt);t\ge 0 \right),\qquad \forall k >0,
	\end{equation*} which can be derived from \eqref{eqn:limitDefH}.
	\begin{remark}\label{rem:Aremark}
		By scaling the L\'evy process $X$, the constant $A$ in \eqref{eqn:psiStable} can be taken to equal 1 and this is typically done in the literature. We will not do this when proving Theorems \ref{thm:Epidemic3} or \ref{thm:ctsAlpha} in order to simplify the presentation. By using scaling properties for both $X$ and $H$, it is possible to prove the results in Theorem \ref{thm:ctsAlpha} and Corollary \ref{cor:1} continue to hold when $A = 1$.
	\end{remark}

	As originally observed by Aldous \cite{Aldous.97}, one can encode the size of components of a random graph by a certain walk which possesses a scaling limit of the form $X(t)+f(t)$ where $X$ is a L\'evy process and $f(t)$ is a deterministic drift term. Aldous first proved this \cite{Aldous.97} within the critical window of \ER random graph where $X$ is a Brownian motion and $f$ is a quadratic function. This later extended to the $\alpha$-stable case by Joseph \cite{Joseph.14} on the configuration model where $X$ is a stable L\'{e}vy process.
	
	For the $\alpha$-stable case $(\alpha\in (1,2))$, Conchon--Kerjan and Goldschmidt \cite{CKG.20} described the process in \cite{Joseph.14} via an exponential tilting of a L\'{e}vy process. That is they examine an $\alpha$-stable process $X$ and its associated height process $H$ of the form and define $\tilde{X}$ and $\tilde{H}$ by
	\begin{equation}\label{eqn:processStableTilt}
		\E\left[F(\tilde{X},\tilde{H}; [0,t]) \right] = \E\left[\exp\left(-\frac{1}{\delta}\int_0^t s\,dX(s) - A \frac{t^{\alpha+1}}{(\alpha+1)\delta^\alpha} \right) F({X},{H};[0,t]) \right]
	\end{equation} where $A$ is as in \eqref{eqn:psiStable} and $F$ is a function on the paths of upto time $t$. The $A$ in our notation is $\frac{C_\alpha}{\delta}$ in the notation of \cite{CKG.20}.
	
	The excursions of the process $\tilde{X}$ before time $t>0$ can be described via the absolute continuity relationship in \eqref{eqn:processStableTilt} and the excursions of $X$ prior to time $t$. What is very useful for us is that all the excursions of \begin{equation*} \tilde{R}(t) = \tilde{X}(t)-\tilde{I}(t)
	\end{equation*} above zero can be ordered by decreasing length \cite[Lemma 3.5]{CKG.20}. That is the lengths of the excursion intervals, $(\zeta_i;i\ge 1)$, can be indexed such that $\zeta_1\ge \zeta_2\ge \dotsm\ge 0$. Corresponding the values $\zeta_i$, there is an excursion interval $(g_i,d_i)$ of length $d_i-g_i = \zeta_i$ such that $\tilde{R}(g_i) = \tilde{R}(d_i) = 0$ and $\tilde{R}(t)>0$ for all $t\in(g_i,d_i)$. We define the excursion $\tilde{e}_i = (e_i(t);t\ge 0)$ by
	\begin{equation} \label{eqn:defEi}
		\tilde{e}_i(t) = \tilde{R}((g_i+t)\wedge d_i),\qquad t\ge 0.
	\end{equation} These are the excursion which appear in Theorem \ref{thm:Epidemic3}.
	
	We also let $\tilde{h}_i = (\tilde{h}_i(t);t\ge 0)$ be the excursion of $\tilde{H}$ which straddles $(g_i,d_i)$ defined by
	\begin{equation*}
		\tilde{h}_i(t) = \tilde{H}\left((g_i+t)\wedge d_i\right).
	\end{equation*}
	
	\subsubsection{Normalized excursions and tilting}  \label{sec:stableEx}

	We now recall Chaumont's path construction of a normalized excursion of a spectrally positive $\alpha$-stable L\'{e}vy process $X$. See \cite{Chaumont.97} or \cite[Chapter VIII]{Bertoin.96} for more details on this. 
	This allows for an simple description of the conditioning the excursion measure $N(\cdot | \zeta = x)$, for a fixed constant (deterministic) $x>0$ and $\zeta$ is the duration of the excursion. These results also hold in the Brownian case $\alpha = 2$., and we refer to Chapter XII of \cite{RY.99} for that treatment.

	Define $\widehat{g}_1$ and $\widehat{d}_1$ by
	\begin{equation*}
		\widehat{g}_1 = \sup\{s\le 1: X(s) = I(s)\},\qquad \widehat{d}_1 = \inf\{s>1 : X(s) = I(s)\},
	\end{equation*} and define
	\begin{equation}\label{eqn:alphaExcursion}
		\e(t) = \frac{1}{(\widehat{d}_1 - \widehat{g}_1)^{1/\alpha}} \left( X(\widehat{g}_1 +(\widehat{d}_1-\widehat{g}_1) t) - X(\widehat{g}_1)\right),\qquad t\in [0,1].
	\end{equation}
	The normalized excursion $\e = (\e(t);t\in[0,1])$ has duration $\zeta =1$, and its law is $N(\cdot | \zeta = 1)$. We obtain, the law $N(\cdot | \zeta = x)$ by scaling. Namely, set
	\begin{equation*}
		\e_x(t) = x^{1/\alpha} \e( x^{-1} t ),\qquad  t\in [0,x],
	\end{equation*} and then $\e_x = (\e_x(t);t\in[0,x])$ has law $N(\cdot|\zeta = x)$.
	
	This can also be done under the conditioning on the lifetime of the excursion of the height process $H$. See \cite{Duquesne.03} or \cite{Miermont.03} for more information. We denote $\hp = (\hp(t);t\in[0,1])$ as the height process under the measure $N(\cdot | \zeta = 1)$ and (by the scaling for the height process) we write
	\begin{equation*}
		\hp_x (t) = x^{(\alpha-1)/\alpha} \hp(x^{-1} t),\qquad t\in[0,x].
	\end{equation*}

	The normalized excursions of $\tilde{X}$ and $\tilde{H}$ are trickier to handle because the process $\tilde{X}$ does not have stationary increments. However, there is a relatively simple way of describing these in terms of an exponential tilting of the excursions $\e$ and $\hp$ similar to Aldous' description in \cite{Aldous.97} in the Brownian case. We define the tilted processes denoted by, $\tilde{\e}^{(\delta)}_x$ and $\tilde{\hp}^{(\delta)}_x$, by
	\begin{equation}\label{eqn:tilting}
		\E\left[F(\tilde{\e}_x^{(\delta)},\tilde{\hp}^{(\delta)}_x) \right] = \frac{\E[\exp(\frac{1}{\delta}\int_0^x \e_x(t)\,dt) F({\e}_x,{\hp}_x)]}{\E[\exp(\frac{1}{\delta}\int_0^x \e_x(t)\,dt)]}
	\end{equation} When $x = 1$ or $\delta = 1$ we omit it from notation. The excursions $\ee_x^{(\delta)}$ and $\tilde{\hp}^{(\delta)}_x$ are shown in \cite{CKG.20} to be the excursions $(\tilde{e}_i,\tilde{h}_i)$ conditioned on their duration being exactly $x$.
	\begin{remark}
		To clear up any confusion between $\ee^{(\delta)}$ defined in \eqref{eqn:tilting} and $\e^{(k)}$ defined in \eqref{eqn:ktilt}, we note that we use the tilde $\tilde{\,}$ to denote tilting associated with an exponential tilting of an excursion. We do not include a tilde when discussing the polynomial tilting in \eqref{eqn:ktilt}.
	\end{remark}
	
	\subsection{Lamperti Transform} \label{sec:lamperti}

	The Lamperti transform relates continuous state branching processes and L\'{e}vy processes via a time-change. This relationship dates back to a path-by-path relationship observed by Lamperti \cite{Lamperti.67}, although only proved later by Silverstein \cite{Silverstein.67}. More recently the authors of \cite{CPGUB.13} gave a path-by-path transformation between certain pairs of L\'{e}vy processes and continuous state branching processes with immigration. The bijective relationship was known before the path-by-path connection as well, see \cite{KW.71}.
	For more information on this transformation see \cite{CLU.09} for a description in the continuum, see \cite{CPGUB.13,CPGUB.17} for scaling limits related to continuous state branching processes and their generalizations affine processes, and see \cite{AUB.20} for a scaling limits involving a similar situation of non-uniqueness of the limiting equation.
	
	We will focus on the transform applied to excursions. Given a non-decreasing function $c:\R_+\to \R$ denote its right-hand derivative by $D_+c$, i.e.
	\begin{equation*}
		D_+c(t) = \lim_{\eps\downarrow 0} \frac{c(t+\eps)- c(t)}{\eps}.
	\end{equation*}
	We now define the Lamperti transform and the Lamperti pair.
	\begin{definition}
		Given a c\`adl\`ag function $f\in \D(\R_+,\R_+)$ let
		\begin{equation*}
			\iota (t) = \int_0^t \frac{1}{f(s)}\,ds.
		\end{equation*} Define the right-continuous inverse of $\iota$, denoted by $c^0$, by
		\begin{equation*}
			c^0(t) = \inf\{s\ge 0: \iota(s)>t\},
		\end{equation*} with the convention $\inf\emptyset = \inf\{t>0:f(t) = 0\}$. The \textit{Lamperti transform} of $f$ is the function $h^0 = f\circ c^0$ and we call the pair $(h^0,c^0)$ the \textit{Lamperti pair} associated to $f$.
	\end{definition}

	Hopefully the choice of notating the Lamperti pair by $(h^0,c^0)$ will be clear after the statement of the next proposition, which we recall from \cite{AUB.20} while introducing a trivial scaling argument and fixing a typo:
	\begin{prop}(\cite[Proposition 2]{AUB.20})  \label{prop:AUB}
		Let $f\in \D(\R_+,\R_+)$ with non-negative jumps. Assume that $f(t) = 0$ if and only if $t\in\{0\}\cup[\zeta,\infty)$ for some $\zeta\in(0,\infty)$. Let $(h^0,c^0)$ denote the Lamperti pair associated to $f$. Then, solutions to
		\begin{equation}\label{eqn:lampertiODE}
			c(0) = 0, \qquad D_+ c = f\circ c
		\end{equation} can be characterized as follows:
		\begin{enumerate}
			\item[1.] If $\displaystyle \int_{0+} \frac{1}{f(s)}\,ds = \infty$ then $h = c = 0$ is the unique solution to \eqref{eqn:lampertiODE}.
			\item[2.] If $\displaystyle \int_{0+} \frac{1}{f(s)}\,ds <\infty$ then $c^0$ is not identically zero, $D_+ c^0 = h^0$, and $c^0$ solves \eqref{eqn:lampertiODE}. Furthermore, solutions to \eqref{eqn:lampertiODE} are a one-parameter family $(c^\lambda; \lambda\in [0,\infty])$ given by
			\begin{equation*}
				c^\lambda(t) = c^0 \left((t-\lambda)_+ \right), \qquad (x)_+ := x\vee 0.
			\end{equation*}
			In addition,
			\begin{enumerate}
				\item If $\displaystyle\int^{\zeta-} \frac{1}{f(s)}\,ds = \infty$ then $c^0$ is strictly increasing with $\lim_{t\to\infty} c^0(t) = \zeta$.
				\item If $\displaystyle\int^{\zeta-} \frac{1}{f(s)}\,ds <\infty$ then $c^0$ is strictly increasing until reaching $\displaystyle\iota(1) = \int_0^{\zeta} \frac{1}{f(s)}\,ds$.
			\end{enumerate}
		\end{enumerate}
	\end{prop}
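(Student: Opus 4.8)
The plan is to reduce to the normalized case $\zeta = 1$ and then analyze the ODE \eqref{eqn:lampertiODE} directly, following \cite[Proposition 2]{AUB.20}. For the reduction I would note that if $f$ vanishes exactly on $\{0\}\cup[\zeta,\infty)$ then $\hat f(s):=\zeta^{-1}f(\zeta s)$ vanishes exactly on $\{0\}\cup[1,\infty)$, that $c\mapsto\hat c:=\zeta^{-1}c$ is a bijection between solutions of $D_+c=f\circ c$, $c(0)=0$, and solutions of $D_+\hat c=\hat f\circ\hat c$, $\hat c(0)=0$, and that the Lamperti data transform compatibly: $\iota_{\hat f}(t)=\iota_f(\zeta t)$, $c^0_f=\zeta\,c^0_{\hat f}$, and the integrals $\int_{0+}1/f$ and $\int^{\zeta-}1/f$ are finite iff their hatted counterparts are. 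So it suffices to treat $\zeta=1$.

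Next I would record the elementary structure of an arbitrary solution $c$. Since $f\ge 0$, $c$ is non-decreasing, and since $D_+c=f\circ c$ is locally bounded, $c$ is continuous and (locally Lipschitz, hence) absolutely continuous with $c(t)=\int_0^t f(c(s))\,ds$. As $f(0)=0$, the function $c\equiv 0$ is always a solution; this is $c^\infty$. If $c\not\equiv 0$, set $\lambda:=\sup\{t:c(t)=0\}$; monotonicity and continuity give $\lambda<\infty$, $c=0$ on $[0,\lambda]$, and $c>0$ on $(\lambda,\infty)$. Then $\tilde c:=c(\cdot+\lambda)$ is again a solution with $\tilde c>0$ on $(0,\infty)$, and on the initial interval where $\tilde c\in(0,1)$ we have $D_+\tilde c=f\circ\tilde c>0$, so there $\tilde c$ is strictly increasing.

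The core is the Lamperti identity $\iota(\tilde c(t))=t$. On any interval where $\tilde c$ stays in a compact subset of $(0,1)$, both $\iota$ (absolutely continuous there, with $\iota'=1/f$) and $\tilde c$ are absolutely continuous and monotone, so $t\mapsto\iota(\tilde c(t))$ is absolutely continuous with a.e.\ derivative $\iota'(\tilde c(t))\tilde c'(t)=1$; with $\iota(\tilde c(0))=0$ this gives $\iota(\tilde c(t))=t$ wherever the left side is finite. If $\int_{0+}1/f(s)\,ds=\infty$, then $\iota(v)=\infty$ for every $v>0$, so $\iota(\tilde c(t))=\infty\ne t$ for $t>0$: no nontrivial solution exists, hence $c\equiv 0$ and $h=D_+c=f(0)=0$, proving part 1. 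If $\int_{0+}1/f(s)\,ds<\infty$, then $\iota$ is a continuous strictly increasing bijection of $[0,1)$ onto $[0,\iota(1-))$, its inverse (extended by the constant $1$ beyond $\iota(1-)$ when that value is finite) is precisely $c^0$, one checks directly that $c^0$ solves \eqref{eqn:lampertiODE} — including at $0$, where $D_+c^0(0)=0=f(0)$ since $\iota$ has infinite right-slope at $0$ because $f(0+)=0$ — and that $c^0\not\equiv 0$; the dichotomy in parts 2(a),(b) is just whether $\iota(1-)=\int_0^1 1/f$ is infinite or finite. The identity $\iota\circ\tilde c=\mathrm{id}$ then forces $\tilde c=c^0$, so $c(t)=c^0((t-\lambda)_+)=c^\lambda(t)$; conversely each $c^\lambda$, $\lambda\in[0,\infty]$, solves \eqref{eqn:lampertiODE}, the only point to verify being the corner at $t=\lambda$, where $D_+c^\lambda(\lambda)=D_+c^0(0)=f(0)=0=f(c^\lambda(\lambda))$. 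This yields the one-parameter family and finishes part 2.

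The main obstacle is making the Lamperti-identity step rigorous for a merely càdlàg, positive-on-$(0,1)$ function $f$: $1/f$ may be large or even non-integrable near interior points, so one must show that a nontrivial solution cannot move past such a point — equivalently, that $\iota(\tilde c(t))=t$ propagates as far as $\iota$ remains finite and that $\tilde c$ is genuinely strictly increasing and invertible on the relevant range. Carefully handling the change of variables, i.e.\ the composition of an absolutely continuous function with a monotone absolutely continuous one and the treatment of null sets via the Luzin $N$ property, is where the real work lies; the rest is soft.
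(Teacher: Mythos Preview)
The paper does not give its own proof of this proposition: it is recalled verbatim from \cite[Proposition~2]{AUB.20}, with the remark that a ``trivial scaling argument'' has been inserted and a typo fixed. So there is nothing to compare your argument against in this paper; your sketch is a direct reconstruction of the argument one expects to find in \cite{AUB.20}, and the scaling reduction you open with is exactly the ``trivial scaling argument'' the authors allude to.

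Your outline is essentially correct. The key identity $\iota(\tilde c(t))=t$ is the heart of the matter, and your derivation via the chain rule for monotone absolutely continuous compositions is the right idea. One small comment on the obstacle you flag: you worry that $1/f$ may fail to be integrable near an interior point of $(0,\zeta)$. This is not really an issue for the \emph{classification} of solutions --- if $\iota$ blows up at some interior $t_0\in(0,\zeta)$, then $c^0$ (and any nontrivial solution) simply never reaches $t_0$, and the identity $\iota\circ\tilde c=\mathrm{id}$ still forces $\tilde c=c^0$. What it \emph{would} break is conclusion 2(a), namely $\lim_{t\to\infty}c^0(t)=\zeta$: that conclusion tacitly requires $1/f$ to be locally integrable on $(0,\zeta)$, which for the excursion-type functions used in this paper is automatic but for a general c\`adl\`ag $f$ with no negative jumps is not. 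You might note this as an implicit hypothesis, or observe that the proposition as stated is really only used in the paper for functions where local integrability holds. Otherwise the technical work you describe (Luzin $N$, change of variables on the range of $\tilde c$) is exactly what is needed and your plan would go through.
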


	The above proposition states that all the solutions to \eqref{eqn:lampertiODE} are determined by time-shifts of the Lamperti pair associated with $f$, or is identically zero. As we will see in the sequel, a major part of the proof of Theorem \ref{thm:conv1} is showing that every subsequential (weak) limit of the $\tilde{C}_n$ is of the form $C^0$ and not a time-shift, $C^\Lambda$, of $C^0$ for some random $\Lambda$.

	With Proposition \cite{AUB.20} recalled, we can prove Corollary \ref{cor:1} from Theorem \ref{thm:ctsAlpha}.
	\begin{proof}[Proof of Corollary \ref{cor:1}]
		
		We begin by observing that 
		\begin{equation*}
			\sup_{v\in \operatorname{spt}(\mu)} \ddd(\rho,v) \overset{d}{=}\int_0^1 \frac{1}{{\e}^{(k)}(s)}\,ds
		\end{equation*} follows from Theorem \ref{thm:ctsAlpha} by an application of conclusion (2)(b) in Proposition \ref{prop:AUB}. To replace the support of the measure $\mu$ with the graph $\GG^{(\alpha,k)}$ we observe that \begin{equation*}
			\sup_{v\in \operatorname{spt}(\mu)} \ddd(\rho,v) = \sup_{v\in \GG^{(\alpha,k)}} \ddd(\rho,v).
		\end{equation*} Indeed to prove this equality observe that the leafs of the graph $\GG^{(\alpha,k)}$ are dense in both the support of the measure $\mu$ and the graph $\GG^{(\alpha,k)}$ which follows from analogous results for the continuum random trees \cite{Duquesne.03,DL.05} and the observation that the exponential tilting in the construction of the graphs does not change this almost sure statement.
		
		Part (2) trivially follows from Theorem \ref{thm:ctsAlpha} and the observation that $\ccc$ increases from $0$ to $1$ as $t$ ranges from $0$ to $\infty$. We restrict the rest our proof to part (3), the argument of which will imply part (4) with minor modifications.

		We recall the well-known fact that if $X$ is a real random variable taking values in $(a,b)$ with cumulative distribution function $F$ which is strictly increasing on $(a,b)$, then $X \sim F^{-1}(U)$ where $U$ is a standard uniform random variable and $F^{-1}(y) = \inf\{t: F(t)>y\}$ is the right-continuous inverse. Typically this is stated with the left-continuos inverse of $F$; however, when $F$ is strictly increasing these two inverses agree on $(a,b)$.
		
		Now, conditionally given $\GG^{(\alpha,k)}$ , Theorem \ref{thm:ctsAlpha} implies that
		\begin{equation*}
			\P(\ddd(\rho,V)\le t| \GG^{(\alpha,k)}) = \mu(B(\rho,t)) = \ccc(t).
		\end{equation*} Thus,
		\begin{equation*}
			\ddd(\rho,V) \overset{d}{=} \inf\{t: \ccc(t) > U\} ,\qquad U\sim \operatorname{Unif}(0,1).
		\end{equation*}
		However, the process $\ccc$ is equal in distribution to $c(t) = \int_0^t z(s)\,ds$ where $z$ is as in part (2) of Theorem \ref{thm:ctsAlpha}. It is easy to see by examining the discussion of the Lamperti transform above, that
		\begin{equation*}
			\ccc(t) = \inf\left\{u: \int_0^u \frac{1}{\e^{(k)}(s)}\,ds>t\right\} \wedge 1.
		\end{equation*} See also the discussion preceding Proposition 2 in \cite{AUB.20} and Chapter 6 of \cite{EK.86} as well. The result now follows by taking another inverse.
		
		The proof of part (4) is a trivial generalization involving order statistics.
	\end{proof}

	\subsection{Convergence of Metric Spaces} \label{sec:ghp} In this section we discuss how to topologize the collection of pointed measured metric spaces with some additional compactness assumptions. We start with a definition:
	
	\begin{definition}
		A collection $\M = (\M,\rho,\ddd,\mu)$ is a \textit{pointed measured metric} (PMM) space if $(\M,\ddd)$ is a metric space, $\mu$ is a Borel measure on $\M$ and $\rho\in \M$ is a distinguished point. We say that $\M$ is \textit{boundedly compact} if bounded sets are pre-compact and we say that $\mu$ is a \textit{boundedly finite} measure if bounded sets have finite mass. We say that $\M$ and $\M'$ are \textit{isomorphic} if there exists a bijective isometry $f:\M\to \M'$ such that $f(\rho) = f(\rho')$ and $f_{\#}\mu = \mu'$ and $f^{-1}_{\#} \mu' = \mu$. We denote the collection of all (isomorphism classes of) boundedly compact PMM spaces equipped with boundedly finite measures by $\XX$. Let $\XX_c\subset \XX$ consist of all compact elements of $\XX$.
	\end{definition}

	We leave a more detailed accounting of the metric space structure of $\XX$ to the texts \cite{ADH.13,Lei.19}. We do recall some useful properties which will be used in the sequel.
	
	\begin{thm} (\cite{ADH.13,Lei.19})
		\begin{enumerate}
			
			\item There exists a metric $d_\GHP$ on the space $\XX$ which makes $(\XX,d_\GHP)$ a Polish space.
			
			\item There exists a metric $d_\GHP^c$ on the space $\XX_c$ which makes $(\XX_c,d_\GHP^c)$ a Polish space.
			
		\end{enumerate}
	\end{thm}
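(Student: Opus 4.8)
The plan is to recall the standard construction of the \ghp metric from \cite{ADH.13,Lei.19} rather than reinvent it, so I would only need to sketch four ingredients. First, for the compact case I would define, for $\M = (M,\rho,\ddd,\mu)$ and $\M' = (M',\rho',\ddd',\mu')$ in $\XX_c$,
\begin{equation*}
	d_\GHP^c(\M,\M') = \inf_{Z,\phi,\phi'}\left( d_H^Z(\phi(M),\phi'(M')) \vee d_P^Z(\phi_\#\mu,\phi'_\#\mu') \vee d_Z(\phi(\rho),\phi'(\rho')) \right),
\end{equation*}
the infimum over metric spaces $(Z,d_Z)$ and isometric embeddings $\phi\colon M\to Z$, $\phi'\colon M'\to Z$, where $d_H^Z$ and $d_P^Z$ are the Hausdorff and Prokhorov distances in $Z$. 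Symmetry and finiteness are immediate (embed both into the disjoint union with a suitable metric). For the triangle inequality I would use the usual gluing trick: glue near-optimal ambient spaces for the pairs $(\M,\M'')$ and $(\M'',\M')$ along the common copy of $M''$, and use that Hausdorff and Prokhorov distances both obey the triangle inequality in the glued space. The first genuinely nontrivial step is showing $d_\GHP^c(\M,\M') = 0$ implies $\M\cong\M'$: from a vanishing sequence of embeddings I would extract, using compactness and a diagonal argument over countable dense subsets, a distance-preserving correspondence that additionally transports $\mu$ to $\mu'$ via the Prokhorov bound, and then promote it to an honest root-preserving, measure-preserving isometry.

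Next I would establish that $(\XX_c,d_\GHP^c)$ is Polish. For separability, any compact PMM space is approximated in $d_\GHP^c$ by a finite subset with rational mutual distances carrying an atomic measure with rational weights, and there are only countably many such finite PMM spaces. For completeness I would first prove a Gromov-type precompactness criterion: a subset of $\XX_c$ is relatively compact if and only if the diameters and total masses are uniformly bounded and, for every $\eps>0$, all the spaces admit an $\eps$-net whose cardinality is bounded uniformly in the space. Then a $d_\GHP^c$-Cauchy sequence automatically satisfies these bounds, so it lies in a relatively compact set and hence converges.

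Finally, for the boundedly compact case I would localize: for $\M\in\XX$ let $\M^{(r)}$ be the closed ball $\bar B(\rho,r)$ equipped with the restricted metric and measure and the same root $\rho$, which is an element of $\XX_c$, and set
\begin{equation*}
	d_\GHP(\M,\M') = \int_0^\infty e^{-r}\left(1\wedge d_\GHP^c\big(\M^{(r)},{\M'}^{(r)}\big)\right)\,dr.
\end{equation*}
Here I would note that $r\mapsto\M^{(r)}$ is continuous except at the at most countably many radii where $\mu$ charges the sphere $\partial B(\rho,r)$, so the integrand is measurable; the triangle inequality passes through the integral from the compact case; and $d_\GHP(\M,\M') = 0$ forces $\M^{(r)}\cong{\M'}^{(r)}$ for a.e.\ $r$, hence $\M\cong\M'$ upon letting $r\uparrow\infty$ along good radii. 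Separability transfers because finite rational PMM spaces remain dense, and for completeness I would take a $d_\GHP$-Cauchy sequence, observe that for Lebesgue-a.e.\ $r$ the restrictions form a $d_\GHP^c$-Cauchy sequence, pick a countable dense set of such radii, apply completeness of $\XX_c$, and patch the limits together consistently into a single boundedly compact PMM space.

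The hard part, in both cases, is the combination of ``distance zero implies isomorphism'' with completeness: the former requires upgrading a limiting correspondence—extracted from embeddings where the Hausdorff part controls the metric and the Prokhorov part controls the measure—into a genuine measure-preserving isometry, and the latter requires checking that the object reconstructed from a Cauchy sequence is again a (boundedly) compact PMM space carrying a (boundedly) finite measure. In the locally compact setting there is the extra bookkeeping that $r\mapsto\M^{(r)}$ is only piecewise continuous, so the a.e.-in-$r$ conclusions must be shown to be mutually consistent in order to glue them into one limiting space; this is exactly the technical content supplied by \cite{ADH.13,Lei.19}, which I would cite rather than reproduce in full.
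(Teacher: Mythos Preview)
The paper does not prove this theorem at all: it is stated as a known result with attribution to \cite{ADH.13,Lei.19} and no proof is given. Your sketch correctly outlines the standard construction from those references, and since you likewise conclude by deferring the technical details to \cite{ADH.13,Lei.19}, your treatment is consistent with (and more detailed than) the paper's own handling.
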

	
	We now prove the following simple lemma, which we cannot find in the existing literature. This will be used in the proof of Theorem \ref{thm:conv1}. We denote by $B(y,r)$, the closed ball of radius $r$ centered at $y$ in the appropriate metric space.
	\begin{lem}\label{lem:measrueOfBalls} Let $\M_n = (\M_n,\rho_n,\ddd_n,\mu_n)$, $\M = (\M,\rho,\ddd,\mu)$ be random elements of $\XX$ such that
		\begin{equation*}
			\M_n\weakarrow \M,\qquad\text{ as random elements of }(\XX,d_{\GHP}),
		\end{equation*} and $\mu$ on $\M$ is almost surely not the zero measure.
		Then for all but countably many $r\in(0,\infty)$:
		\begin{equation*}
			\mu_n(B(\rho_n,r))\weakarrow \mu(B(\rho,r)),\qquad\text{as real numbers}.
		\end{equation*}
		
		Both convergences above can be replaced with almost sure convergence as well.
		
	\end{lem}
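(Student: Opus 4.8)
The plan is to reduce the problem to the deterministic setting via Skorohod representation and then exploit the structure of GHP convergence. First I would invoke the Skorohod representation theorem: since $\XX$ is Polish (part (1) of the recalled theorem), on a suitable probability space we may realize copies $\M_n' \overset{d}{=} \M_n$ and $\M' \overset{d}{=} \M$ with $d_\GHP(\M_n',\M') \to 0$ almost surely. Working on this space, it suffices to prove the deterministic statement: if $d_\GHP(\M_n,\M)\to 0$ and $\mu$ is not the zero measure, then $\mu_n(B(\rho_n,r)) \to \mu(B(\rho,r))$ for all but countably many $r$. The "countably many" exceptional set will be $\{r : \mu(\partial B(\rho,r)) > 0\}$, which is countable because $r\mapsto \mu(B(\rho,r))$ is nondecreasing and bounded on each bounded interval, hence has at most countably many discontinuities (and $\mu(\partial B(\rho,r)) = \mu(B(\rho,r)) - \mu(B(\rho,r-))$ up to the standard subtlety that $B$ is the closed ball, so one works with the jump of the left-continuous modification).

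For the deterministic step, I would use the standard characterization of $d_\GHP$ convergence via correspondences and couplings: $d_\GHP(\M_n,\M)\to 0$ means there exist a metric space $(E_n,d_{E_n})$ isometric embeddings $\M_n,\M\hookrightarrow E_n$ (identifying $\rho_n$ with $\rho$ up to small error) with $d_H^{E_n}(\M_n,\M)\to 0$ (Hausdorff distance) and a coupling measure or a Prokhorov bound $d_P^{E_n}(\mu_n,\mu)\to 0$. Fix $r$ with $\mu(\partial B(\rho,r))=0$. For the upper bound on $\limsup_n \mu_n(B(\rho_n,r))$: any point of $\M_n$ within $\ddd_n$-distance $r$ of $\rho_n$ embeds within $E_n$-distance $r + o(1)$ of (the image of) $\rho$, hence lies in a slightly enlarged ball; pushing through the Prokhorov bound gives $\mu_n(B(\rho_n,r)) \le \mu(B(\rho, r+\eps_n)) + \eps_n$ for some $\eps_n\to 0$, and since $\mu(B(\rho,r+\eps))\downarrow \mu(B(\rho,r))$ as $\eps\downarrow 0$ by $\mu(\partial B(\rho,r))=0$, we conclude $\limsup_n \mu_n(B(\rho_n,r)) \le \mu(B(\rho,r))$. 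Symmetrically, $\mu(B(\rho,r)) = \lim_{\eps\downarrow 0}\mu(B(\rho, r-\eps))$ again by $\mu(\partial B(\rho,r))=0$, and a matching lower bound $\mu_n(B(\rho_n,r)) \ge \mu(B(\rho,r-\eps_n)) - \eps_n$ follows the same way, giving $\liminf_n \mu_n(B(\rho_n,r)) \ge \mu(B(\rho,r))$. Together these give the claimed convergence for every such $r$.

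There is one point requiring care: the statement claims the exceptional set is \emph{countable} (deterministically, i.e. the same countable set works), but after Skorohod representation the natural exceptional set $\{r:\mu(\partial B(\rho,r))>0\}$ is random. This is harmless: one simply states the almost-sure version as "for almost every $\w$, for all but countably many $r$," and then for the \emph{in-distribution} version one notes that for each \emph{fixed} $r$ outside a deterministic countable set one has $\P(\mu(\partial B(\rho,r))>0)=0$ — this follows because $\E[\,\cdot\,]$ or a Fubini argument shows $\{r : \P(\mu(\partial B(\rho,r))>0)>0\}$ is itself countable (the function $r\mapsto \E[\mu(B(\rho,r))\wedge 1]$ is monotone hence has countably many discontinuities, and a jump in probability forces a jump here). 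For each such fixed good $r$, the a.s.\ convergence on the Skorohod space transfers back to weak convergence of the real random variables $\mu_n(B(\rho_n,r))$. The "can be replaced by almost sure convergence" clause is then exactly the Skorohod-space statement before transferring back.

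The main obstacle I anticipate is bookkeeping the boundary/regularity issue cleanly — specifically, handling the fact that $B$ denotes the \emph{closed} ball so that the relevant monotone function is neither left- nor right-continuous in general, and making sure the enlarged-ball inequalities from the Prokhorov estimate are stated with the correct open/closed ball and the correct direction of the $\eps_n$ shift. None of this is deep, but it is the step where an off-by-one error in open vs.\ closed balls would break the argument, so I would be careful to phrase the squeeze using $\mu(B(\rho,r+\eps))\downarrow \mu(B(\rho,r))$ and $\mu(B(\rho,(r-\eps)))\uparrow \mu(B^\circ(\rho,r)) = \mu(B(\rho,r))$ (the last equality being precisely $\mu(\partial B(\rho,r))=0$).
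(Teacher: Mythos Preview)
Your proposal is correct and follows essentially the same strategy as the paper: reduce to a deterministic continuity statement for the functional $\M\mapsto\mu(B(\rho,r))$ at spaces where the sphere of radius $r$ has zero mass, then handle the random exceptional set by a monotonicity/countability argument. The organization differs slightly --- you invoke Skorohod representation first and then run the deterministic argument, whereas the paper proves the deterministic lemma first and then applies it as a continuous-mapping statement --- but these are equivalent packagings.

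The one genuine technical difference is in the deterministic step. You argue directly via the embedding/Prokhorov characterization of GHP convergence and an $\eps$-sandwich on enlarged balls. The paper instead reduces to the compact case (via a localization result of Lei), uses the existence of $\eps_n$-isometries $f^n:\M_n\to\M$ with $f^n_\#\mu_n\to\mu$ in the weak-$*$ topology, normalizes to probability measures, and then invokes Portmanteau. Your route is more self-contained; the paper's leans on structural results from \cite{Lei.19} and is slightly cleaner once those are available. Both reach the same conclusion, and your discussion of the random-versus-deterministic exceptional set (via monotonicity of $r\mapsto\E[\mu(B(\rho,r))\wedge 1]$) is exactly the argument the paper gestures at by analogy with jump times of c\`adl\`ag processes.
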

	
	We prove this by first appealing to a deterministic lemma.
	\begin{lem}\label{lem:ballCont}
		Let $\M_n\to \M$ in $d_{\GHP}$ and suppose that the measure $\mu$ on $\M$ is not the zero measure. Let $r$ be a radius such that
		\begin{equation*}
			\mu\left\{x\in \M: \ddd(\rho,x) = r\right\} = 0.
		\end{equation*}
		Then
		\begin{equation*}
			\mu_n(B(\rho_n,r))\to \mu(B(\rho,r)).
		\end{equation*}
	\end{lem}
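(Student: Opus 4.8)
The plan is to use the definition of $d_\GHP$ via correspondences and couplings of the measures. Since $\M_n\to\M$ in $d_\GHP$, for every $\eps>0$ there is (for $n$ large) a correspondence $\mathcal{R}_n$ between $\M_n$ and $\M$ with $\rho_n\,\mathcal{R}_n\,\rho$, distortion $\dis(\mathcal{R}_n)<\eps$, and a coupling $\pi_n$ of $\mu_n$ and $\mu$ supported (up to mass $<\eps$) on $\mathcal{R}_n$; one must first truncate to a large ball $B(\rho,L)$ so that everything is finite, using that $\mu$ is boundedly finite and $\mu\ne 0$ to pick $L$ with $\mu(B(\rho,L))$ large and $\mu\{x:\ddd(\rho,x)=L\}=0$. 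The point of the distortion bound is that if $(x,y)\in\mathcal{R}_n$ then $|\ddd_n(\rho_n,x)-\ddd(\rho,y)|<\eps$, so $\mathcal{R}_n$ maps $B(\rho_n,r-\eps)$ into $B(\rho,r)$ and $B(\rho_n,r)$ into $B(\rho,r+\eps)$, and symmetrically. Hence up to the coupling error $\eps$,
\begin{equation*}
	\mu_n(B(\rho_n,r))\le \mu\big(B(\rho,r+\eps)\big)+\eps,\qquad
	\mu_n(B(\rho_n,r))\ge \mu\big(B(\rho,r-\eps)\big)-\eps,
\end{equation*}
with the analogous inequalities bounding $\mu(B(\rho,r))$ in terms of $\mu_n(B(\rho_n,r\pm\eps))$ if needed. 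Letting $n\to\infty$ and then $\eps\downarrow 0$, the claim reduces to the continuity of $\eps\mapsto\mu(B(\rho,r+\eps))$ and $\eps\mapsto\mu(B(\rho,r-\eps))$ at $\eps=0$.

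Second, I would invoke the hypothesis $\mu\{x\in\M:\ddd(\rho,x)=r\}=0$ precisely here: the function $s\mapsto\mu(B(\rho,s))$ is nondecreasing, right-continuous by monotone convergence, and its left limit at $r$ is $\mu(\{x:\ddd(\rho,x)<r\})$; the jump at $r$ equals $\mu\{x:\ddd(\rho,x)=r\}$, which vanishes. Therefore $\lim_{\eps\downarrow0}\mu(B(\rho,r\pm\eps))=\mu(B(\rho,r))$, and combining with the two-sided estimates above gives $\mu_n(B(\rho_n,r))\to\mu(B(\rho,r))$.

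I expect the main obstacle to be the bookkeeping of the truncation to a bounded ball: the space $\M$ is only boundedly compact and $\mu$ only boundedly finite, so the $\GHP$ convergence a priori only controls things inside large balls, and one must be careful that the mass of $\mu_n$ outside $B(\rho_n,L)$ (equivalently, of $\mu$ outside $B(\rho,L+\eps)$) is small uniformly in $n$. This is handled by first choosing $L$ large and a continuity radius for $\mu$ at $L$, applying the convergence inside $B(\rho_n,L)$ and $B(\rho,L)$ (using that $\M_n\to\M$ in $d_\GHP$ restricts to convergence of the truncations, cf.\ \cite{ADH.13,Lei.19}), and absorbing the tail mass into the error terms; the hypothesis $\mu\ne 0$ guarantees such an $L$ with positive mass exists so that the correspondence-plus-coupling description is non-vacuous. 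Once the truncation is set up, the distortion estimate and the continuity of the distribution function of $\ddd(\rho,\cdot)$ under $\mu$ finish the argument routinely.
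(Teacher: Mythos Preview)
Your argument is correct and takes a genuinely different route from the paper. You work directly with the correspondence/coupling description of $d_\GHP$: the distortion bound yields the sandwich estimates $\mu(B(\rho,r-\eps))-\eps\le\mu_n(B(\rho_n,r))\le\mu(B(\rho,r+\eps))+\eps$, and the hypothesis $\mu\{x:\ddd(\rho,x)=r\}=0$ is used exactly where it should be, to pass $\eps\downarrow 0$. The paper instead invokes Theorem~3.16 of \cite{Lei.19} to restrict everything to the closed ball of radius $r$ (this is where the continuity-radius hypothesis enters for them), so that the spaces become compact and the quantity of interest becomes the \emph{total mass}; it then uses the $\eps$-isometry characterisation of compact GHP convergence (Theorem~3.18 of \cite{Lei.19}) to obtain weak-* convergence of the push-forwards $f^n_\#\mu_n\to\mu$ on $\M$, from which total-mass convergence is immediate by integrating $1_\M$. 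Your approach is more elementary and self-contained, at the price of having to do the truncation-to-a-large-ball bookkeeping by hand; the paper outsources that step to \cite{Lei.19}. One small remark: your justification for needing $\mu\ne 0$ is off---your sandwich argument works perfectly well when $\mu=0$, and the hypothesis is genuinely used only in the paper's proof, where they divide by the total mass to normalise to probability measures before invoking Portmanteau.
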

	\begin{proof}
		By Theorem 3.16 in \cite{Lei.19}, it suffices to consider the compact case where $\M_n,\M\in \XX_c$ and $\M_n\to \M$ with respect to the $d_{\GHP}^c$ metric and that $r = \sup_{x\in \M} \ddd(\rho,x)$.
		
		We recall that, for metric spaces $X$ and $Y$, a function $f:X\to Y$ is an $\eps$-isometry if $f$ is measurable and
		\begin{equation*}
			\sup\{|d(x_1,x_2) - d(f(x_1),f(x_2))| :x_1,x_2\in X\}\le \eps
		\end{equation*} and
		for all $y\in Y$ there exists some $x\in X$ such that $d(y,f(x))<\eps$.
		
		By Theorem 3.18 in \cite{Lei.19}, there exists a sequence $\eps_n\to 0$ and a sequence of functions $f^n:\M_n\to \M$ such that $f^n$ is an $\eps_n$-isometry and such that
		\begin{equation*}
			f_\#^n\mu_n \to \mu,
		\end{equation*} with respect to the weak-* topology of measures on $\M$, that is convergence of the integrals against compactly supported continuous functions. However, $1_{\M}$ is continuous and compactly supported since $\M$ is compact. So the following convergence holds in because of convergence in the weak-* topology:
		\begin{equation*}
			\begin{split}
				\mu_n(\M_n) &= \int_{\M_n} 1_{\M_n} \,d\mu_n = \int_{\M_n} 1_{\M}\circ f^n(x)\,\mu_n(dx) \\
				&= \int_\M 1_{\M} (x) (f_\#^n\mu_n)(dx)\to \int_\M 1_{\M}\,d\mu = \mu(\M).
			\end{split}
		\end{equation*}
		Therefore, there is no loss in generality in assuming that the measures $\mu_n$ and $\mu$ are probability measures, since we can just rescale the measures by their (non-zero) total mass. Since weak-* convergence of probability measures on a compact space is simply weak convergence of probability measures, the desired convergence holds by Portmanteau.
	\end{proof}

	\begin{proof}[Proof of Lemma \ref{lem:measrueOfBalls}]
		By Lemma \ref{lem:ballCont}, we have shown that the map $\Phi_r:\XX\to \R$ by
		\begin{equation*}
			\Phi_r(\M) = \mu(B(\rho,r))
		\end{equation*} is continuous at each $\M$ such that $\mu(\{x\in \M: \ddd(\rho,x) = r\}) = 0$.
		
		Now given a random element $\M$ with law $\P$, we just need to show
		\begin{equation*}
			\left\{r: \P\left[\mu(\{x\in \M: \ddd(\rho,x) = r\})> 0 \right]>0\right\} \quad\text{is countable}.
		\end{equation*} This follows from the same argument that random processes in $\D(\R_+,\R)$ cannot have a uncountably many jump-times which occur with strictly positive probability. The proof of that latter statement can be found in Section 13 of \cite{Billingsley.99}, but is omitted here.
		
	\end{proof}
	
	\subsection{Continuum random trees and continuum random graphs} \label{sec:realGraphs}

	In this section we briefly recall the definition of continuum random trees and continuum random graphs. This will not be a full description of what these metric spaces are, but will be enough to define the metric spaces we use in the sequel. For a more abstract account of these metric spaces see Section 2.2 of \cite{ABBGM.17}, for example.
	
	We briefly describe a real tree encoded by a continuous function $h$, see \cite{LeGall.05} and references therein for more information. Let $h:[0,x]\to [0,\infty)$ be a continuous function such that $h(0) = h(x) = 0$ and $h(t)>0$ for all $t\in (0,x)$. We can define a psuedo-distance $d_h$ on $[0,x]$ by
	\begin{equation*}
		d_h(s,t) = h(s)+h(t) - 2 \inf_{s\wedge t\le r \le s\vee t} h(r).
	\end{equation*} We then define an equivalence relation by $s\sim_h t$ if $d_h(s,t) = 0$. The random tree $\T_h$ is defined as the quotient space
	\begin{equation*}
		\T_h = [0,x]/\sim_h.
	\end{equation*} and let $q_h:[0,x]\to \T_h$ denote the canonical quotient map. The topological space $\T_h$ can be made into a PMM by setting the specified point as $\rho:= q_h(0) = q_h(x)$, the distance as $d(q_h(s),q_h(t)) = d_h(s,t)$ which is a well-defined metric and the measure as $\mu = (q_h)_{\#}\Leb|_{[0,x]}$. We call $\T_h$ the tree encoded by the function $h$, and call $h$ the height function (or process) of $\T_h$.
	
	The spaces $\T_h$ are tree-like in the sense that given any two elements $a,b\in \T_h$, there exists a unique isometry $f_{a,b}:[0,d(a,b)]\to \T_h$ such that $f_{a,b}(0) = a$ and $f_{a,b}(d(a,b)) = b$ and every continuous injection $f:[0,1]\to \T_h$ such that $f(0) = a$ and $f(b) = 1$ is a reparametrization of $f_{a,b}$.
	
	Let us now describe how to add \textit{shortcuts} to the tree $\T_h$ in order to form a graph-like metric space. We fix a c\`adl\`ag function $g:[0,x]\to [0,\infty)$ such that $g(0) = g(x) = 0$ and $g$ doesn't jump downwards, i.e. $g(t)-g(t-)\ge 0$ for all $t$. We also suppose that we have a finite set $Q = \{(t_j,y_j):j=1,\dotsm, s\}$ of points in $[0,x]\times \R_+$ such that $y_j\le g(t_j)$. For each of these values $t_j$, we define the value $\tilde{t}_j$ by
	\begin{equation*}
		\tilde{t}_j = \inf\{u\ge t_j: g(u) = y_j\}.
	\end{equation*} The infimum above is taken over a non-empty set because $g$ does not jump downwards. These times $t_j$ and $\tilde{t}_j$ will come to represent the points in the tree that are glued together.
	
	Let us now go back into the continuum tree $\T_h$. Define the vertices $u_j = q_h(t_j)$ and $v_j = q_h(\tilde{t}_j)$ where $q_h$ is the canonical quotient map. We define a new equivalence relation $\sim$ on $\T_h$ which depends on both $g$ and $Q$ by setting $u_j\sim v_j$ for each $j = 1,2,\dotsm,s$. We define the set
	\begin{equation*}
		\mathscr{G}(h,g,Q) = \T_h /\sim.
	\end{equation*}
	It is straightforward to turn $\mathscr{G}(h,g,Q)$ into a PMM by where the distance between $u_j$ and $v_j$ is zero.
	
	In the description of the construction of the graph $\mathscr{G}(h,g,Q)$ it is easier to consider only the case where $Q$ consists of points $(t,y)$ such that $0\le y\le g(t)$ and $t\in[0,x]$. We can equally as well consider the situation where $Q$ is a discrete set in $\R_+\times \R_+$ with finitely many elements in any compact set, and define
	\begin{equation*}
		\mathscr{G}(h,g,Q) = \mathscr{G}(h,g,g\cap Q),\quad\text{where}\quad g\cap Q = \{(t,y)\in Q: y\le g(t)\}.
	\end{equation*}

	Let us now describe the graphs $\GG^{(\alpha,k)}$ that we mentioned in the introduction. See \cite{GHS.18, CKG.20} for more information on these graphs. We first define the $\alpha$-stable graph $\GG^{(\alpha)}$ where we let the surplus be a random non-negative integer.
	The graphs $\GG^{(\alpha)}$ are the graphs $\GG(\tilde{\hp}^{(1)}_1,\ee^{(1)}_1,\PP)$ for a Poisson random measure $\PP$ on $\R_+^2$ with Lebesgue intensity. The Poisson point process $\PP$ has only a finite number of points $(t,y)$ such that $0\le y\le \ee^{(\delta)}_1(t)$, and this is the surplus of the random graph $\GG^{(\alpha)}$. The graph $\GG^{(\alpha,k)}$ is just the graph $\GG^{(\alpha)}$ conditioned on having fixed surplus $k$. This conditioning on the number of points of $\PP$ which lie under the curve $\ee^{(1)}_1$ changes the exponential tilting in \eqref{eqn:tilting} to the polynomial tilting in \eqref{eqn:ktilt}.

	For more information on random trees and graphs see \cite{Aldous.91,Aldous.91a,Aldous.93, LeGall.05} for the Brownian CRT, see \cite{Miermont.03,Duquesne.03,DL.02} for the stable-trees, see \cite{ABBG.12,BBSW.14} for the Brownian random graph and \cite{CKG.20,GHS.18} for the stable graph.

	\section{Proofs of Weak Convergence Results} \label{sec:proofOfMainThm}
	
	We now turn our attention to proving the abstract weak convergence results: Theorems \ref{thm:conv1} and \ref{thm:conv2}. To simplify the notation in the proof of Theorem \ref{thm:conv1}, we write $X_n(\cdot) = Z_n(0) + X_n^\BF(\cdot)$. By assumption (4) in Theorem \ref{thm:conv1}, $Z_n(0)\to 0$ in probability and so assumption (1) in Theorem \ref{thm:conv1} holds with $X_n$ replacing $X_n^\BF$ by Slutsky's theorem. Moreover, changing \eqref{eqn:discLamperti2} to match this notation, the process $Z_n$ solves
	\begin{equation*}
		Z_n(h+1) = X_{n}\circ C_n(h),\qquad C_n(h) = \sum_{j=0}^h Z_n(j), \qquad C_n(-1) = 0.
	\end{equation*}
	
	We define the rescalings:
	\begin{equation*}
		\begin{split}
			\tilde{Z}_n(t) &= \frac{\alpha_n}{\gamma_n} Z_n(\fl{\alpha_nt})\\
			\tilde{C}_n(t) &= \frac{1}{\gamma_n} C_n(\fl{\alpha_nt}) \\
			\tilde{X}_n(t) &= \frac{\alpha_n}{\gamma_n}X_n(\gamma_nt).
		\end{split}
	\end{equation*}
	
	We begin by proving the tightness of $\tilde{X}_n$ and $\tilde{C}_n$.
	\begin{prop}\label{prop:jointCX}
		Under the assumptions of Theorem \ref{thm:conv1}, and the above notation, the sequence $((\tilde{C}_n,\tilde{X}_n);n\ge 1)$ is tight in $\D(\R_+,\R)^2$. Moreover, any subsequential limit of $(\tilde{C}_n,\tilde{X}_n;n\ge 1)$, say $(C,X)$, must satisfy \begin{equation*}C(t) = \int_0^t X\circ C(s)\,ds.\end{equation*}
	\end{prop}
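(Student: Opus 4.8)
The plan is to rewrite the difference equation \eqref{eqn:discLamperti2} (in the form $Z_n(h+1)=X_n\circ C_n(h)$) as an \emph{integrated} fixed-point relation for $\tilde C_n$, to extract tightness of $\tilde C_n$ from that relation --- the nontrivial direction, since tightness of $\tilde X_n$ is already assumed --- and finally to identify every subsequential limit by passing to the limit inside the relation. Summing $Z_n(h+1)=X_n(C_n(h))$ and using $C_n(-1)=0$ gives $C_n(h)=Z_n(0)+\sum_{i=0}^{h-1}X_n(C_n(i))$; since $\tilde C_n$ is constant on each interval $[i/\alpha_n,(i+1)/\alpha_n)$, on which it equals $\gamma_n^{-1}C_n(i)$, rescaling yields
\begin{equation*}
\tilde C_n(t)=\frac{Z_n(0)}{\gamma_n}+\int_0^{\fl{\alpha_n t}/\alpha_n}\tilde X_n\big(\tilde C_n(s)\big)\,ds .
\end{equation*}
Here $\tilde C_n$ is nondecreasing with $\tilde C_n\le\gamma_n^{-1}\m_n(G_n)=1$, so $\tilde X_n$ is only evaluated on $[0,1]$; and since $Z_n(0)\le\sup_v\m_n(v)$, assumption (4) of Theorem \ref{thm:conv1} together with $\alpha_n\to\infty$ forces $Z_n(0)/\gamma_n\to0$ in probability.

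\emph{Tightness.} The supremum functional is continuous on $\D([0,1],\R)$, so assumption (1) makes $M_n:=\sup_{[0,1]}|\tilde X_n|$ tight. From the displayed identity one reads off $0\le\tilde C_n(t)-\tilde C_n(s)\le(t-s+\alpha_n^{-1})M_n$ for $s\le t$, and that every jump of $\tilde C_n$ is at most $\alpha_n^{-1}M_n\vee(Z_n(0)/\gamma_n)$; hence the jumps vanish in probability and $\tilde C_n$ is tight in $\D(\R_+,\R)$ with every subsequential limit continuous and nondecreasing. Joint tightness of $(\tilde C_n,\tilde X_n)$ follows from marginal tightness.

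\emph{Identifying the limit.} Fix a subsequence along which $(\tilde C_{n_k},\tilde X_{n_k})\weakarrow(C,X)$ and, by the Skorohod representation theorem, realise the convergence almost surely. Continuity of $C$ upgrades $\tilde C_{n_k}\to C$ to locally uniform convergence, so $\tilde C_{n_k}(s)\to C(s)$ for every $s$; combining this with the time-change description of $J_1$-convergence for $\tilde X_{n_k}\to X$ gives $\tilde X_{n_k}(\tilde C_{n_k}(s))\to X(C(s))$ at every $s$ for which $X$ is continuous at $C(s)$. Since $C$ is nondecreasing it is flat on at most countably many heights, and the displayed identity forces each such flat interval to sit at a height that is negligible for the integral; with the countability of the jump set of $X$ this makes $\{s:X\text{ discontinuous at }C(s)\}$ Lebesgue-null. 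As the integrands are bounded by $\sup_kM_{n_k}<\infty$, bounded convergence together with $Z_{n_k}(0)/\gamma_{n_k}\to0$ and $\fl{\alpha_{n_k}t}/\alpha_{n_k}\to t$ let us pass to the limit in the displayed identity and obtain $C(t)=\int_0^tX\circ C(s)\,ds$ for all $t$, almost surely.

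\emph{Main obstacle.} The only genuinely delicate point is this last passage to the limit in the composition $\tilde X_{n_k}\circ\tilde C_{n_k}$: composition is not continuous on Skorohod space, so the almost-everywhere convergence must be argued directly, and the crux is to show that the flat intervals of the limiting time change $C$ occur only at heights where $X$ contributes nothing to the integral. The remaining work --- the integrated recursion and the tightness estimate --- is routine bookkeeping once the identity is in place.
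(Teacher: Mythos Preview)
Your tightness argument is essentially the paper's: bound the increments of $\tilde C_n$ by $(t-s+O(\alpha_n^{-1}))\|\tilde X_n\|$ and invoke marginal tightness. The paper then defers the identification of subsequential limits entirely to Proposition~7 of \cite{AUB.20}; you instead attempt a direct proof by passing to the limit inside the integrated recursion. That is a reasonable strategy, but the step you flag as the ``main obstacle'' is not actually carried out.

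The gap is in the sentence ``the displayed identity forces each such flat interval to sit at a height that is negligible for the integral; with the countability of the jump set of $X$ this makes $\{s:X\text{ discontinuous at }C(s)\}$ Lebesgue-null.'' Suppose $C$ is flat at height $y\in(0,1)$ on $[a,b]$. From $\tilde C_{n_k}(b)-\tilde C_{n_k}(a)\to0$ and non-negativity you get $\int_a^b\tilde X_{n_k}(\tilde C_{n_k}(s))\,ds\to0$, hence the integrand tends to $0$ in $L^1[a,b]$. But $\tilde C_{n_k}(s)$ may approach $y$ from \emph{below}, in which case the pointwise limit of the integrand is $X(y-)$, not $X(y)$. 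So all you extract is $X(y-)=0$; you do \emph{not} get $X(y)=0$, and since $y\in(0,1)$ forces $X(y)>0$ by hypothesis, the desired identity $C(t)=\int_0^t X(C(s))\,ds$ would actually \emph{fail} on $[a,b]$ (the right-hand side grows at rate $X(y)>0$ there while the left-hand side is constant). Nothing in your argument excludes this scenario: an excursion with only positive jumps can perfectly well have $X(y-)=0<X(y)$ at an interior jump time, and the discrete $\tilde C_{n_k}$ can stall just below $\lambda_k(y)$ for arbitrarily long before crossing.

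What is really needed --- and what the argument in \cite{AUB.20} supplies --- is to rule out flat plateaus of $C$ at heights in $(0,1)$ \emph{before} writing down the limiting integral equation, not as a consequence of it. One route is to show that once $\tilde C_{n_k}$ exceeds the (shifted) jump level $\lambda_k(y)$ it must grow at rate bounded below by $X(y)/2$, so any plateau at height $y$ with $X(y)>0$ would force $\tilde C_{n_k}$ to remain strictly below $\lambda_k(y)$ on $[a,b]$ and hence $C$ could never exceed $y$ afterwards; combining this with the a priori knowledge that $C$ eventually reaches $1$ (or with the GHP assumption~(2)) gives the contradiction. Your write-up gestures at this but does not execute it.
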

	
	\begin{proof}
		We alter the proof of Proposition 7 in \cite{AUB.20}. That proof involves a linear interpolation of $C_n$ instead, which makes their proof slightly simpler. The differences are easily overcome using compactness results in Billingsley's monograph \cite{Billingsley.99}.
		
		Because tightness of marginals implies tightness of the pair of random elements, in order to show the tightness claimed, it suffices to show that $(\tilde{C}_n;n\ge 1)$ is tight, since we assume that $\tilde{X}_n$ converges weakly and is therefore tight.
		Towards this end, observe that $\tilde{C}_n$ is uniformly bounded:
		\begin{equation}\label{eqn:uniformlyBounded}
			0\le \tilde{C}_n(t) \le \frac{1}{\gamma_n} \m_n(G_n)  =1.
		\end{equation}
		
		We now set $t>s$. We have
		\begin{align*}
			\tilde{C}_n(t) - \tilde{C}_n(s) &= \frac{1}{\gamma_n} (C_{n}(\fl{\alpha_nt}) - C_n(\fl{\alpha_ns}))\\
			&=  \frac{1}{\gamma_n} \sum_{h=\fl{\alpha_ns}+1}^{\fl{\alpha_nt}} Z_n(h)\\
			&\le \frac{1}{\gamma_n} \int_{\alpha_ns-1}^{\alpha_nt+1} Z_n(\fl{u})\,du\\
			&= \frac{1}{\gamma_n} \int_{\alpha_ns-1}^{\alpha_nt+1} X_n\circ C_n(\fl{u})\,du\\
			&= \frac{\alpha_n}{\gamma_n} \int_{s-\alpha_n^{-1}}^{t+\alpha_n^{-1}} X_n\circ C_n(\fl{\alpha_nu})\,du\\
			&\le (t-s+2\alpha_n^{-1}) \|\tilde{X}_n\|,
		\end{align*} where
		\begin{equation*}
			\|f(t)\| = \sup_{t\in[0,1]} |f(t)|.
		\end{equation*}

		Define the functions
		\begin{equation*}
			w(f; I):= \sup_{s,t\in I} |f(t)-f(s)|,\qquad f\in \D(\R_+), I\subset \R,
		\end{equation*}
		and, for $\delta>0$,
		\begin{equation*}
			w_N'(f;\delta):= \inf_{\{t_i\}} \max_{1\le i\le v} w(f;[t_{i-1},t_i)), \qquad N = 1,2,\dotsm
		\end{equation*} where the infimum is taken over all partitions $0= t_0< t_1<\dotsm < t_v= N$ such that $t_i-t_{i-1}>\delta$ for $1\le i< v$.
		
		From the above string of inequalities, for any integer $N>0$ and any $\delta>0$, we have
		\begin{equation*}
			w'_N(\tilde{C}_n;\delta)\le  2\left(\delta+\alpha_n^{-1} \right) \|\tilde{X}_n\|, \qquad \forall n\ge 1,
		\end{equation*} Moreover, for any fixed $\delta>0$, there exists an $n_0$ sufficiently large such that
		\begin{equation*}
			2(\delta+ \alpha_n^{-1}) \|\tilde{X}_n\| \le 4 \delta \|\tilde{X}_n\|,\qquad \forall n\ge n_0,
		\end{equation*} since $\alpha_n\to\infty$.
		Fix $\eps>0$ and an integer $N$. Applying Theorem 13.2 in \cite{Billingsley.99} gives
		\begin{align*}
			\lim_{\delta\downarrow 0}\limsup_{n\to\infty} \P\left(w_N'(\tilde{C}_n;\delta) \ge \eps \right) \le \lim_{\delta\downarrow 0} \limsup_{n\to\infty} \P \left(\|\tilde{X}_n\| \ge \frac{\eps}{4\delta} \right) = 0.
		\end{align*} Hence, by Theorem 16.8 in \cite{Billingsley.99}, the process $\tilde{C}_n$ is tight.
		
		The statement about the form of the subsequential weak limits follows as in the proof of Proposition 7 in \cite{AUB.20} with little alteration.
	\end{proof}

	\subsection{Proofs of Theorems \ref{thm:conv1} and \ref{thm:conv2}}
	
	We now move to describe more accurately the possible subsequential limits in Proposition \ref{prop:jointCX}.
	By Proposition \ref{prop:AUB} and Proposition \ref{prop:jointCX}, the subsequential limits $(\tilde{C}_{n_\ell},\tilde{X}_{n_\ell}) \weakarrow (C,X)$ must be of the form $C(t) = C^0((t-\Lambda)_+)$ for some (random) $\Lambda:= \inf\{t: C(t)>0\}\in[0,\infty]$ where $C^0$ is the Lamperti transform of the $X$. We desire to show that $\Lambda =0$ almost surely.

	By the Skorokhod representation theorem and by possible taking a further subsequence, we can assume that we are working on a probability space such that both
	\begin{equation*}
		\scale(\alpha_{n_\ell}^{-1},\gamma_{n_\ell}^{-1})G_{n_\ell}\to \M,\qquad \text{and}\qquad (\tilde{C}_{n_\ell},\tilde{X}_{n_\ell}) \to (C , X)
	\end{equation*} occur almost surely in their respective topologies: the first convergence is with respect to the pointed \ghp topology and the second convergence is with respect to the product topology on the $\D\times\D$. We write $\tilde{G}_{n_\ell} = (\tilde{G}_{n_\ell}, \rho_{n_\ell}, \tilde{d}, \tilde{\m}_{n_\ell})$ for $\scale(\alpha_{n_\ell}^{-1},\gamma_{n_\ell}^{-1})G_n^i$. By Lemma \ref{lem:measrueOfBalls}, we have for all but countably many $t>0$,
	\begin{equation*}
		\begin{split}
			\tilde{C}_{n_\ell}(t) &= \frac{1}{\gamma_{n_\ell}} \m_{n_\ell}\{v\in G_{n_\ell}: d(v,\rho_{n_\ell})\le \alpha_{n_\ell} t\} \\
			&= \tilde{\m}_{n_\ell}\left(\{v\in \tilde{G}_{n_\ell}: \tilde{d}(v,\rho_{n_\ell}) \le t\} \right) \longrightarrow \mu(\{x\in \M: d(\rho,x)\le t\}).
		\end{split}
	\end{equation*} Similarly, by the convergence of $\tilde{C}_{n_\ell}$ in $\D$ we have for all but countably many $t>0$
	\begin{equation*}
		\tilde{C}_{n_\ell}(t) \rightarrow C(t).
	\end{equation*} By a standard diagnolization argument there exists a sequence $t_{m}\downarrow 0$ such that
	\begin{equation*}
		C(t_{m}) = \mu \left( \bar{B}(\rho,t_{m})\right)> 0.
	\end{equation*} The inequality follows from Assumption (3) in Theorem \ref{thm:conv1}.
	
	Hence
	\begin{equation*}
		\Lambda = \inf\{t: C(t)>0\} = 0.
	\end{equation*}
	
	Therefore every subsequential weak limit for $(\tilde{C}_n,\tilde{X}_n)$ must be of the form $(C^0,X)$ where $C^0$ is part of the Lamperti pair associated with $X$. Along with looking at Proposition \ref{prop:AUB}, we have proved the following:
	\begin{prop}\label{prop:jointCXwithLimit}
		Let $(Z^0,C^0)$ be the Lamperti pair of $X$. Then, under the assumptions of Theorem \ref{thm:conv1}, the following weak convergence holds
		\begin{equation*}
			\left(\tilde{C}_n,\tilde{X}_n\right) \Longrightarrow \left(C^0,X\right),
		\end{equation*} in the product Skorokhod space $\D\times\D$.
		
		Moreover, since $C^0$ is not identically zero $X$ must satisfy
		\begin{equation*}
			\int_{0+} \frac{1}{X(s)}\,ds <\infty.
		\end{equation*}
	\end{prop}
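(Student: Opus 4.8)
The plan is to combine three things already in hand: the tightness together with the integral-equation characterization of subsequential limits from Proposition~\ref{prop:jointCX}; the Lamperti dichotomy from Proposition~\ref{prop:AUB}; and the identification of $\tilde{C}_n$ with masses of balls from Lemma~\ref{lem:measrueOfBalls}. The only step with real content is pinning down the time-shift parameter.

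First I would fix a subsequence $(n_\ell)$ along which $(\tilde{C}_{n_\ell},\tilde{X}_{n_\ell})\weakarrow(C,X)$; such subsequential limits exist by the tightness in Proposition~\ref{prop:jointCX}, and they satisfy $C(t)=\int_0^t X\circ C(s)\,ds$. Since $X$ has no negative jumps and, extended past time $1$ by the constant value $X(1)=0$, vanishes exactly on $\{0\}\cup[1,\infty)$, Proposition~\ref{prop:AUB} applies with $f=X$. Rewriting the integral equation as $C(0)=0$, $D_+C=X\circ C$ (which is legitimate because $X\circ C$ is right-continuous, $X$ being c\`adl\`ag and $C$ continuous nondecreasing), the proposition forces $C$ to be either identically zero or of the form $C(t)=C^0\big((t-\Lambda)_+\big)$, where $\Lambda:=\inf\{t:C(t)>0\}\in[0,\infty]$ and $C^0$ is the Lamperti transform of $X$.

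The hard part is showing $\Lambda=0$ almost surely, and this is where assumptions~(2) and~(3) of Theorem~\ref{thm:conv1} enter. By the Skorokhod representation theorem, after thinning the subsequence once more, I may assume that almost surely $\scale(\alpha_{n_\ell}^{-1},\gamma_{n_\ell}^{-1})G_{n_\ell}\to\M$ in $d_{\GHP}$ and $(\tilde{C}_{n_\ell},\tilde{X}_{n_\ell})\to(C,X)$ in $\D\times\D$ hold on one probability space. Because the graph distance is integer-valued, $\tilde{C}_{n_\ell}(t)$ equals the mass of the closed ball $B(\rho,t)$ in the rescaled component $\scale(\alpha_{n_\ell}^{-1},\gamma_{n_\ell}^{-1})G_{n_\ell}$; hence Lemma~\ref{lem:measrueOfBalls} gives $\tilde{C}_{n_\ell}(t)\to\mu(B(\rho,t))$ for all but countably many $t>0$, while Skorokhod convergence gives $\tilde{C}_{n_\ell}(t)\to C(t)$ at every continuity point of $C$. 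Comparing these, $C(t)=\mu(B(\rho,t))$ off a countable set, and a diagonalization yields $t_m\downarrow 0$ with $C(t_m)=\mu(B(\rho,t_m))\ge\mu\big(B(\rho,t_m)\setminus\{\rho\}\big)>0$ by assumption~(3). Thus $\Lambda=0$, so $C=C^0$, which in particular is not identically zero.

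To conclude, observe that the Lamperti transform is a measurable functional, so $(C^0,X)$ is a well-defined $\D\times\D$-valued random element whose law does not depend on the chosen subsequence; since $(\tilde{C}_n,\tilde{X}_n)$ is tight and all its subsequential limits have this common law, the full sequence converges weakly to $(C^0,X)$. The ``moreover'' clause is then immediate: $C^0$ not being identically zero rules out alternative~(1) of Proposition~\ref{prop:AUB}, so alternative~(2) holds, and that is exactly the assertion $\int_{0+}\frac{1}{X(s)}\,ds<\infty$. The single genuine obstacle throughout is the identification $\Lambda=0$, i.e.\ translating the geometric hypothesis that the root is not isolated in the support of $\mu$ into the analytic statement that the limiting cumulative height profile leaves $0$ immediately; everything else is standard manipulation of the Skorokhod representation and of tight sequences with unique subsequential limits.
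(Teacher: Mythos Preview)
Your proposal is correct and follows essentially the same route as the paper: extract a subsequential limit via Proposition~\ref{prop:jointCX}, invoke Proposition~\ref{prop:AUB} to reduce to identifying the shift $\Lambda$, then pin $\Lambda=0$ by combining Skorokhod representation, Lemma~\ref{lem:measrueOfBalls}, and assumption~(3), and finally conclude by uniqueness of subsequential limits. The paper's argument is organized identically, down to the diagonalization to produce $t_m\downarrow 0$ with $C(t_m)>0$.
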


	We now finish the proof of Theorem \ref{thm:conv1}.
	\begin{proof}[Proof of Theorem \ref{thm:conv1}]
		
		The proof of Proposition \ref{prop:jointCXwithLimit} gives the proof of conclusion (2) of Theorem \ref{thm:conv1}, and so we finish the proof of part (1).

		By Proposition \ref{prop:jointCX} and Proposition \ref{prop:jointCXwithLimit}, and the Skorohod representation theorem, we can assume that we are working on a probability space such that
		\begin{equation*}
			\left(\tilde{C}_n,\tilde{X}_n\right) \longrightarrow (C^0,X)\qquad \text{a.s.}.
		\end{equation*}
		Then, a result of Wu \cite[Theorem 1.2]{Wu.08} which extends a result of Whitt \cite{Whitt.80}, the following convergence holds in the $\D$:
		\begin{equation*}
			\tilde{Z}_n = \tilde{X}_n\circ \tilde{C}_n \longrightarrow X\circ C^0 \qquad \text{a.s.}.
		\end{equation*}
		
		Using Proposition \ref{prop:AUB} part (2), we observe that
		\begin{equation*}
			Z^0:=X\circ C^0 = D_+ C^0.
		\end{equation*} That is $(Z^0,C^0)$ is the Lamperti pair associated with $X$ and in $\D^2$
		\begin{equation*}
			(\tilde{Z}_n,\tilde{C}_n) \weakarrow (Z^0,C^0).
		\end{equation*}
	\end{proof}
	
	The proof of Theorem \ref{thm:conv1} can be easily extended to joint convergence of the of finitely many graphs $G_n^i$ of random masses $\m_n(G_n^i)$ as $n\to\infty$. Since the graph $G_n^{i}$ are ordered by decreasing mass, the excursion lengths also decrease: $\zeta_1\ge \zeta_2\ge \dotsm$. The only part that changes is \eqref{eqn:uniformlyBounded} is replaced with an analogous tightness bound on \begin{equation*}
		\max_{j \le N} \frac{1}{\gamma_n} \m_n(G_n^j) = 
		\frac{1}{\gamma_n} \m_n(G_n^1)
	\end{equation*} This will yield a proof of Theorem \ref{thm:conv2}. The details are omitted.

	\section{The Configuration Model}
	
	In this section we focus on the applications to the configuration model when one specifies a critical degree distribution $\nu$ in the domain of attraction of a stable law. We will focus on the case $\alpha\in(1,2)$, although the Brownian case $\alpha=2$ can be obtained by these methods. The results can easily be altered to cover the $\alpha = 2$ as well, by instead considering the case where $\nu$ has finite third moment at the critical point (see the definition of $\theta$ in \eqref{eqn:thetaDef} below) and omitting the cases $\nu(2)<1$ and $\nu(0)> 0$.

	We will be using the results of Joseph \cite{Joseph.14} and Conchon-Kerjan and Goldschmidt \cite{CKG.20} on scaling limits related to the configuration model. The latter reference provides a metric space scaling limit for the components of the graph at the point of criticality $\theta = 1$, where $\theta$ is defined in \eqref{eqn:thetaDef}. This allows us to utilize Theorem \ref{thm:conv2}. Similar results in the $\alpha=2$ case were obtained prior to Joseph, see Riordan's work \cite{Riordan.12} and also \cite{BS.20}.
	
	\subsection{Preliminaries: The configuration model and convergence}\label{sec:configuration}
	
	Let us describe briefly the configuration model, some of the associated walks on the graphs, and their scaling limits. For a more detailed account of the configuration model, see Chapter 7 of \cite{vanderHofstad.17}.

	The multigraph $M(\dd^n)$ is a random graph on vertex set $(i;i\in[n])$ where the vertex $i$ has degree (counted with multiplicity) $d_i$. We can construct this graph by viewing the vertices $i$ as hubs with $d_i$ \textit{half-edges} jutting out from the vertex $i$. We then pair half-edges uniformly at random to create a multigraph. Given a multigraph $G$, we have \cite[Prop 7.7]{vanderHofstad.17}
	\begin{equation*}
		\P(M(\dd^n) = G) = \frac{1}{\left(  -1+\sum_{j=1}^n d_j \right)!!}\times  \frac{ \prod_{j=1}^n d_j!}{ \prod_{j=1}^n \operatorname{loop}(i) \times \prod_{1\le i<j\le n} \operatorname{mult}(i,j)! },
	\end{equation*} where $\operatorname{loop}(i)$ is the number of self-loops at vertex $i$ and $\operatorname{mult}(i,j)$ is the number of edges between $i$ and $j$. Below we describe two different algorithms for how to construct the multigraph and describe associated walks. It is also convenient to assume that the $d_i$ half-edges connected to a vertex $i$ are ordered, so that we can talk about the ``least'' half-edge. We remark that this random construction described above is taken from a deterministic sequence of half-edges $\dd^n$, later on we will take the vertex degrees to be random.
	
	We describe two algorithms for the construction in a manner quite similar to Joseph \cite{Joseph.14}. We partition the $\sum_{j=1}^n d_j$ half-edges into three disjoint subsets: the set $\mathcal{S}$ of sleeping half-edges, the set $\mathcal{A}$ of active half-edges, and the set $\mathcal{D}$ of dead half-edges. We call the set $\mathcal{S}\cup \mathcal{A}$ the collection of alive half-edges. Initially all half-edges are sleeping.

	\subsubsection{Breadth-first construction}
	
	We construct a graph $M^\BF(\dd^n)$ (we initially include a $\BF$ to specify the construction) as follows:
	
	To initialize at step 1, we pick a sleeping half-edge uniformly at random. Label the corresponding vertex as $v_1$ and declare all of the half-edges attached to $v_1$ as active.
	
	Suppose that we have just finished step $j$. There are three possibilities: (1) $\mathcal{A} \neq \emptyset$, (2) $\mathcal{A} = \emptyset$ and $\mathcal{S}\neq \emptyset$, (3) all half-edges are dead.
	
	In case 1, we proceed as follows:
	\begin{enumerate}
		\item Let $i$ be the \textbf{smallest} integer $k$ such that there exists an active half-edge attached to $v_k$.
		\item Pick the least half-edge $l$ from all active half-edges attached to $v_i$.
		\item Kill $l$, that is, remove it from $\mathcal{A}$ and add it to $\mathcal{D}$.
		\item Choose uniformly at random from all living half-edges $r$ and pair it with $l$, that is, add an edge between the vertex $v_i$ (which is attached to $l$) and the corresponding vertex connected to $r$.
		\item If $r$ is sleeping, then we have discovered a new vertex. Label this new vertex $v_{m+1}$ where we have discovered the vertices $v_{1},\dotsm, v_m$ up to this point. Declare all the half-edges of $v_{m+1}$ are active.
		\item Kill $r$.
	\end{enumerate}
	
	In case 2, we have finished exploring a connected component of $M^\BF(\dd^n)$. We proceed by picking a sleeping half-edge uniformly at random. We then label the corresponding vertex $v_{m+1}$ if we've discovered vertices $v_1,\dotsm, v_m$ up to this point, and we declare all the half-edges connected to $v_{m+1}$ as active.
	
	In case 3, we have explored the entire graph and we are done.
	
	The above is the breadth-first construction of the multigraph $M^\BF(\dd^n)$. In the sequel, we denote the ordering of the vertices in the exploration/construction above by $v_1^\BF,\dotsm, v_n^\BF$.
	
	\begin{remark}
		While the above algorithm gives a breadth-first construction of the graph $M(\dd^n)$, observe that this can also be used to explore the graph $M(\dd^n)$. Indeed, if in step (4), we selected the half-edge $r$ which is connected to $l$ instead of sampling it uniformly, then we would have explored the graph and obtain the an equal in distribution ordering of the vertices. 
	\end{remark}
	
	\begin{figure}[h!]
		\begin{subfigure}{.45\textwidth}
			\centering
			\includegraphics[width=0.9\linewidth]{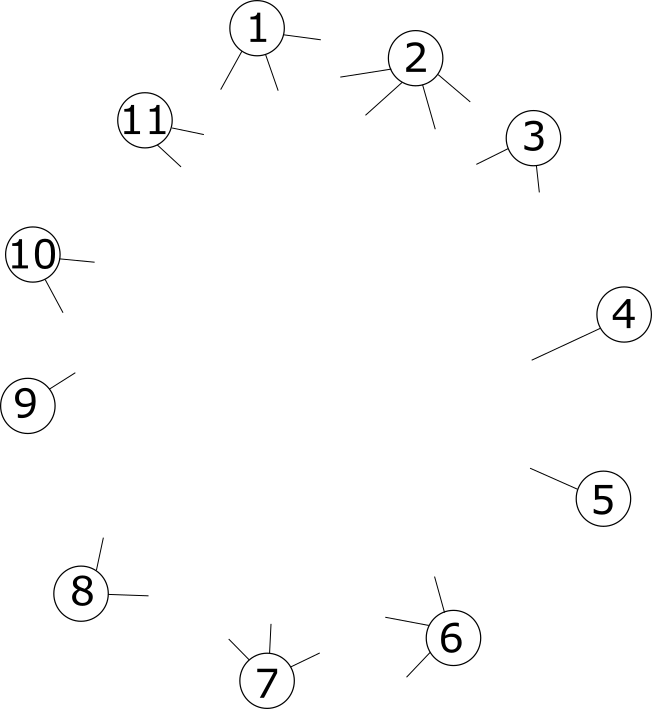}
		\end{subfigure}
		\begin{subfigure}{.45\textwidth}
			\centering
			\includegraphics[width=0.9\linewidth]{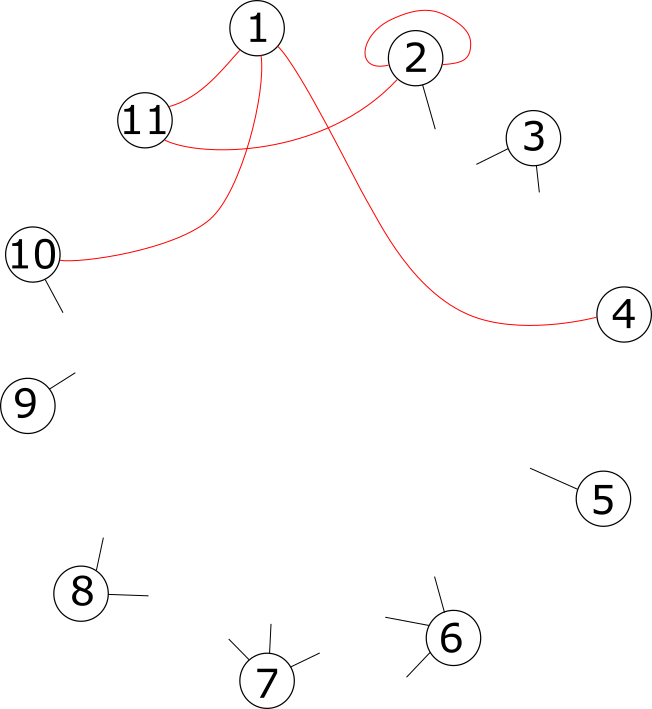}
		\end{subfigure}
		
		\caption{Left: The initial collection of 11 vertices with half-edges appearing from the center. Right: The structure of the breadth-first constructed graph after initially selected a half-edge connected to vertex $11$. The edges were added in this order $\{11,1\}, \{11,2\}, \{1,4\}, \{1,10\}, \{2,2\}$. The next half-edge to be explored is the remaining half-edge jutting out form vertex 2.}
	\end{figure}

	In case 1 above, it is possible that we match two half-edges $l$ with $r$ where $r$ is already active. We call the corresponding edge in the multigraph $M^\BF(\dd^n)$ a \textit{breadth-first (bf) backedge}.
	
	We let $F^\BF(\dd^n)$ denote the forest constructed from the multigraph $M^\BF(\dd^n)$ obtained by splitting all bf backedges into two half-edges and adding two leaves to each of these half-edges. More formally, if the multigraph $M^\BF(\dd^n)$ has a bf backedge between vertices $v_l, v_r$. Remove that edge from the multigraph and add two vertices $v_l'$ and $v_r'$ and add an edge between both pairs $(v_l,v_l')$ and $(v_r,v_r')$. Continue this until all bf backedges are removed and replaced.

	\begin{remark} \label{rem:forestlabel}
		This algorithm can also be used to mark where the new leafs occur within a breadth-first exploration of the forest $F^\BF(\dd^n)$. When we first find backedge between half-edges $l$ and $r$ in $M^\BF(\dd^n)$ we replace it with two new leafs. Then as we are exploring the half-edge $l$, we find a new leaf and do not ``see'' the half-edge $r$. This means we do not kill that half-edge in step (6). This means we will eventually choose half-edge $r$ in step (2) and find second new leaf for this bf backedge. We can then label these vertices $u_1^\BF,\dotsm, u_p^\BF$ for some $p\ge n$. See Figure \ref{fig:bfforestlabeling} for an example of how this is done for the breadth-first construction.
	\end{remark}

	\begin{figure}[h!]
		\centering
		\includegraphics[width=0.9\linewidth]{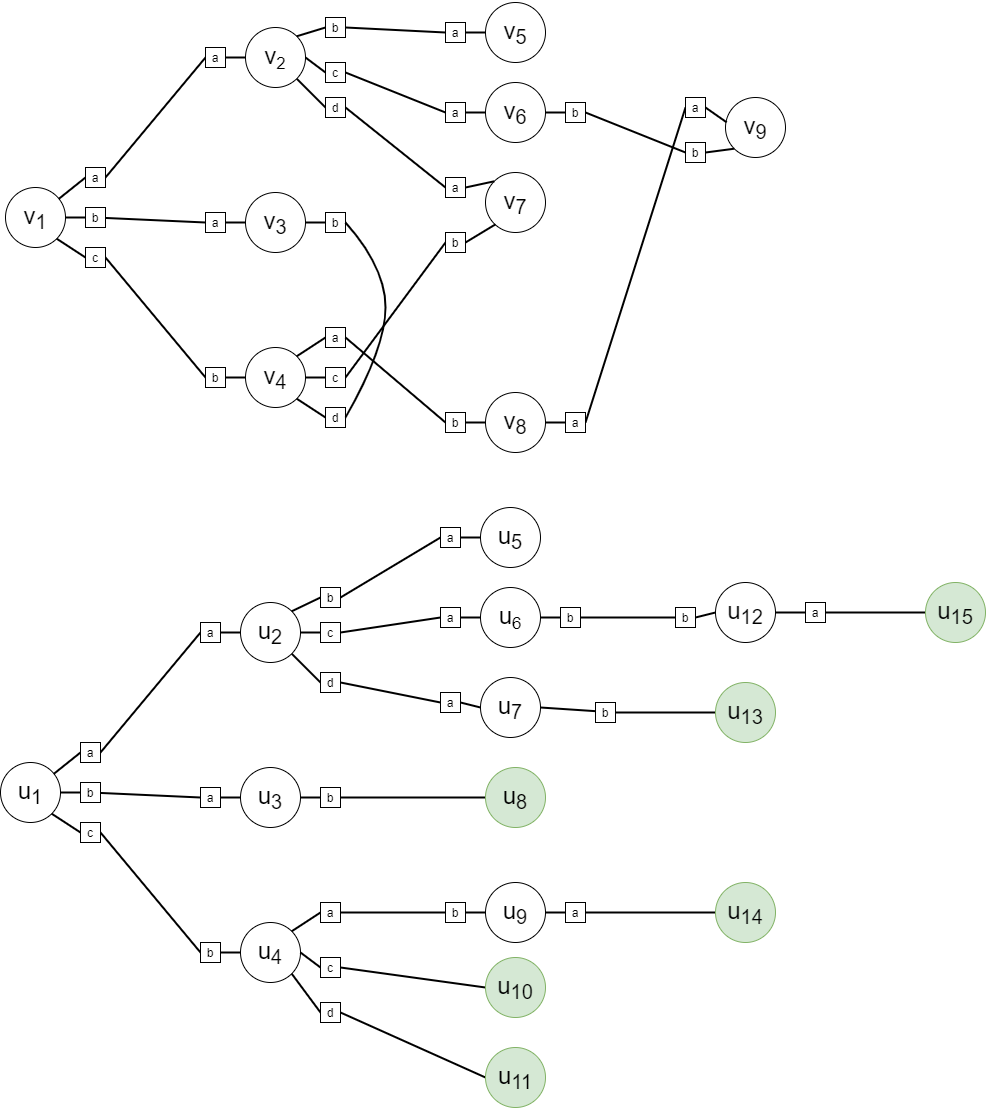}
		\caption{The first component of $M^\BF(\dd^n)$ (top) and the corresponding first component of $F^{\BF}(\dd^n)$ (bottom). The circles are the vertices, the labeled squares are the ordering of the half-edges connected to each hub with $a<b<c<d$. The three bf backedges in this graph connect half-edge $(v_3,b)$ to $(v_4,d)$, half-edge $(v_4, c)$ to $(v_7,b)$, and half-edge $(v_8,a)$ to $(v_9,a)$. The new-leafs are vertices $u_8, u_{10}, u_{11}, u_{13}, u_{14}, u_{15}$ in green and they are ordered according to the ordering of the half-edge to which it is connected.}
		\label{fig:bfforestlabeling}
	\end{figure}

	\subsubsection{Depth-first construction}
	
	We construct a graph $M^\DF(\dd^n)$ (we initially include a $\DF$ to specify the construction) as follows:
	
	We initialize at step 1 as before, we pick a sleeping half-edge uniformly at random. Label the corresponding vertex as $v_1$ and declare all of the half-edges attached to $v_1$ as active. The only thing that changes on subsequent steps is that in Case 1 we replace part (1) with
	\begin{enumerate}
		\item[(1')] Let $i$ be the \textbf{largest} integer $k$ such that there exists an active half-edge attached to $v_k$.
	\end{enumerate}
	
	This above is the depth-first construction of the multigraph $M^\DF(\dd^n)$. This changes the order in which we find vertices and label them, so we will denote the new ordering and labeling by $v_1^\DF,\dotsm, v_n^\DF$. We analogously construct the depth-first forest, by removing depth-first (df) backedges and replacing them with leaves.
	
	\subsubsection{Symmetry between constructions}
	
	Recall that the multigraph $M(\dd^n)$ is taken to be uniform over all possible pairings of half-edges. The following claim is trivial.
	\begin{claim}
		For any degree sequence $\dd^n  = (d_1,\dotsm, d_n)$ the graphs $M^\BF(\dd^n)$ and $M^\DF(\dd^n)$ are equal in distribution. We write both as $M(\dd^n)$.
	\end{claim}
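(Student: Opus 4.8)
The plan is to reduce the claim to the standard fact that a uniformly random perfect matching of a finite set may be generated sequentially, one edge at a time, using an arbitrary (possibly randomized) adaptive rule for deciding which element to match next. Recall that $M(\dd^n)$ is, by definition, the multigraph read off from a uniformly random perfect matching of the set $\mathcal{H}$ of the $\sum_{j=1}^n d_j$ half-edges, where each matched pair $\{l,r\}$ becomes an edge between the vertices carrying $l$ and $r$ (self-loops and multiple edges allowed). Both $M^\BF(\dd^n)$ and $M^\DF(\dd^n)$ are likewise produced from a random perfect matching of $\mathcal{H}$ by exactly this deterministic ``read off the edges'' map, so it suffices to show that the matching of $\mathcal{H}$ produced by either construction is uniform over all $(\sum_j d_j - 1)!!$ perfect matchings.

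First I would isolate the following elementary lemma. Suppose one builds a matching of a finite set $U_0$ of even size $N_0$ by performing, for $k=1,2,\dotsm$: having revealed edges $e_1,\dotsm,e_{k-1}$ and being left with $U_{k-1} = U_0\setminus(e_1\cup\dotsm\cup e_{k-1})$, one chooses an element $l_k\in U_{k-1}$ by \emph{any} rule measurable with respect to $\sigma(e_1,\dotsm,e_{k-1})$ together with auxiliary randomness independent of the pairing choices, and then sets $e_k = \{l_k,r_k\}$ where $r_k$ is uniform on $U_{k-1}\setminus\{l_k\}$ given everything so far. Then $\{e_1,e_2,\dotsm\}$ is uniform over all perfect matchings of $U_0$. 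The proof is a one-line induction on $N_0$: for a fixed target matching $M_0$ and any value of $l_1$, there is exactly one $r_1$ with $\{l_1,r_1\}\in M_0$ (the $M_0$-partner of $l_1$), so $\P(e_1\in M_0 \mid l_1) = 1/(N_0-1)$; on that event $M_0\setminus\{e_1\}$ is a perfect matching of $U_1$ and steps $k\ge 2$ form a procedure of the same type on $U_1$, whence by induction $\P(\{e_1,e_2,\dotsm\} = M_0) = \tfrac{1}{N_0-1}\cdot\tfrac{1}{(N_0-3)!!} = \tfrac{1}{(N_0-1)!!}$, which does not depend on $M_0$.

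Next I would verify that both constructions are instances of this scheme with $U_0 = \mathcal{H}$. In either construction a half-edge lies in $\mathcal{D}$ precisely once it has been killed, i.e.\ once it has become part of an edge, so the set of alive (sleeping or active) half-edges at any moment is exactly the set of currently unmatched half-edges. In Case 1 the half-edge $l$ selected in step (2), resp.\ (2'), is active, hence alive, hence unmatched, and it is a deterministic function of the edges revealed so far and of the component-root choices made so far; the uniform choice of $r$ in step (4) is made, after $l$ has been killed, uniformly over the alive half-edges, which is exactly $U_{k-1}\setminus\{l\}$. The uniformly random selections of the initial half-edge in step 1 and of the successive component roots in Case 2 only affect the labeling of vertices and the order in which half-edges are processed; they play the role of the auxiliary randomness in the lemma. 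Hence in both constructions the matching of $\mathcal{H}$ is uniform, so $M^\BF(\dd^n)\overset{d}{=}M(\dd^n)\overset{d}{=}M^\DF(\dd^n)$.

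There is no genuine obstacle here, consistent with the word ``trivial''; the only points to get right are the formulation of the sequential-matching lemma in enough generality to absorb the randomized choice of component roots, and the bookkeeping observation that ``alive half-edge'' and ``currently unmatched half-edge'' coincide throughout both explorations, so that step (4) truly samples the partner of $l$ uniformly from all remaining unmatched half-edges.
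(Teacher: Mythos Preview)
Your proof is correct and is exactly the standard argument underlying the paper's one-word justification ``trivial'': both explorations are instances of the sequential construction of a uniform perfect matching of the half-edge set, so the resulting multigraph has the law of $M(\dd^n)$ in either case. The paper gives no further details, and your write-up simply spells out what is implicit in the sentence ``the multigraph $M(\dd^n)$ is taken to be uniform over all possible pairings of half-edges.''
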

	
	The symmetry between the constructions allow us to look at two random walks which turn out being equal in distribution. We write $\deg(v)$ for the degree (counted with multiplicity) of the vertex $v$ in a graph, $M(\dd^n), F^\DF(\dd^n),$ etc., which is clear from context. Recall that $(v_j^\BF;j\in[n])$ and $(v_j^\DF;j\in[n])$ are the vertices in the multigraph $M(\dd^n)$ labeled in two distinct ways acording the breadth-first exploration or depth-first explorations respectively. Define the following two walks:
	\begin{equation}\label{eqn:swalks}
		S_{\dd^n}^\BF(k) = \sum_{j=1}^k (\deg (v^\BF_j) - 2) ,\qquad S_{\dd^n}^\DF(k) = \sum_{j=1}^k (\deg(v^\DF_j) - 2).
	\end{equation}

	We wish to define analogous walks on the forests $F^{\BF}(\dd^n)$ and $F^\DF(\dd^n)$, to which we remind the reader of Remark \ref{rem:forestlabel}. Hence we have a labeling all the vertices of the forests as $(u_j^\BF)$ for the forest $F^\BF(\dd^n)$ with the breadth-first exploration and $(u_j^\DF)$ for the forest $F^\DF(\dd^n)$ with the depth-first exploration.

	For each connected component of the multigraph and hence forest, there is a vertex discovered when the collection of active vertices $\mathcal{A}$ was empty, we call those vertices \textit{roots}. If $u$ is a root in a forest, write $\chi(u) = \deg(u)$, otherwise write $\chi(u) = \deg(u) - 1$. The value of $\chi(u)$ is precisely the number of children that vertex $u$ has in the forest in which it lives. For $j$ sufficiently large, there will be no vertex of $u_j^\BF$ or $u_j^\DF$. This will not matter for our scaling limits, but for completeness we define $u_j^\BF$ and $u_j^\DF$ as root vertices of components with a single vertex, and therefore $\chi(u_j^\BF) = \chi(u_j^\DF) = 0$ for sufficiently large $j$. Define the walks
	\begin{equation}\label{eqn:xwalks1}
		X^\BF_{\dd^n}(k) = \sum_{j=1}^k (\chi(u^\BF_j) -1),\qquad X^\DF_{\dd^n}(k) = \sum_{j=1}^k (\chi(u^\DF_j)-1).
	\end{equation}
	
	As is shown in Section 5.1 of \cite{CKG.20}, the distribution of the depth first walk $X^\DF_{\dd^n}$ can be reconstructed from $S_{\dd^n}^\DF$. This was done for when the degree sequence is taken to be random; however it works for a deterministic degree sequence as well. A trivial alteration of that algorithm can be used to construct $X^{\BF}_{\dd^n}$ from the walk $S_{\dd^n}^\BF$ and moreover, we can couple these two constructions to see that backedges have a particular correspondence. We summarize this construction later in the appendix. We write this as the following lemma.
	\begin{lem}\label{lem:convBFDFequal}
		\begin{enumerate}
			\item For any degree sequence $\dd^n$. The breadth-first walks and the depth-first walks are equal in distribution. That is
			\begin{equation*}
				(S_{\dd^n}^\BF(k);k\ge 0) \overset{d}{=} (S_{\dd^n}^\DF(k); k\ge 0) ,\qquad\text{and}\qquad (X_{\dd^n}^\BF(k);k\ge 0) \overset{d}{=} (X_{\dd^n}^\DF(k); k\ge 0).
			\end{equation*}
			
			\item There exists a coupling of $F^\DF(\dd^n)$ and $F^\BF(\dd^n)$ such that for all $j<i$ a df-backedge appears between $u_j^\DF$ and $u_i^\DF$ if and only if a bf-backedge appears between $u_j^\BF$ and $u_i^\BF$.
		\end{enumerate}
		
		In particular, there exists a coupling of $X_n^\BF$ and $X_n^\DF$ such that
		\begin{equation*}
			\inf_{i\le k} X_n^\BF(i) = \inf_{i\le k} X_n^\DF(i).
		\end{equation*}
	\end{lem}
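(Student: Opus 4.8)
The plan is to prove Lemma \ref{lem:convBFDFequal} by carefully unwinding the deterministic reconstruction algorithm of Conchon-Kerjan and Goldschmidt \cite[Section 5.1]{CKG.20}, which recovers the depth-first forest walk $X^\DF_{\dd^n}$ (and the backedge structure) from the step walk $S^\DF_{\dd^n}$, and then transplanting it verbatim to the breadth-first setting. The key structural observation is that both constructions $M^\BF(\dd^n)$ and $M^\DF(\dd^n)$ sample the \emph{same} uniformly random pairing of the $\sum_j d_j$ half-edges; they differ only in the \emph{order} in which pairs are revealed and hence in the labels $v_j^\BF$ versus $v_j^\DF$. Since a uniform pairing is exchangeable under any deterministic reshuffling of which pair is inspected next, the sequence of degrees $(\deg(v_j^\BF))_j$ has the same law as $(\deg(v_j^\DF))_j$, which immediately gives the first equality in part (1) for the $S$-walks. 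For the $X$-walks one does the same with the forests $F^\BF(\dd^n)$, $F^\DF(\dd^n)$: the reconstruction of $\chi$-values from degree-increments is the identical deterministic rule, so the law of $(\chi(u_j^\BF))_j$ matches that of $(\chi(u_j^\DF))_j$, giving $X^\BF_{\dd^n}\overset{d}{=}X^\DF_{\dd^n}$.

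\medskip

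\noindent First I would recall precisely (deferring the lengthy bookkeeping to the appendix, as the excerpt promises) the map $\Phi$ of \cite{CKG.20} that sends a realization of $S^\DF_{\dd^n}$ to a realization of $X^\DF_{\dd^n}$ together with the list of df-backedge pairs $\{(u_j^\DF,u_i^\DF)\}$. The crucial point is that $\Phi$ is a \emph{deterministic} function of the step-walk path and the ancillary uniform randomness used to break ties when choosing which active half-edge to kill; it does not ``know'' whether we are exploring in breadth-first or depth-first order. Consequently there is a single deterministic map $\Psi$ such that $\Psi(S^\bullet_{\dd^n}) = (X^\bullet_{\dd^n}, \text{backedge list})$ for $\bullet\in\{\BF,\DF\}$. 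Then part (1) for the $X$-walks follows from part (1) for the $S$-walks by pushing forward through $\Psi$, and part (2) follows by running $\Psi$ on the \emph{same} input path: couple $S^\BF_{\dd^n}$ and $S^\DF_{\dd^n}$ so that they are literally equal (possible since they are equal in law), feed this common path into $\Psi$, and read off that a df-backedge appears between $u_j^\DF$ and $u_i^\DF$ exactly when a bf-backedge appears between $u_j^\BF$ and $u_i^\BF$.

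\medskip

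\noindent For the final ``in particular'' claim, note that from \eqref{eqn:xwalks1} and the forest structure, $X^\bullet_n(k)$ records one less than the number of children of the first $k$ forest-vertices, so its running infimum $\inf_{i\le k}X^\bullet_n(i)$ is (up to sign) the index of the component being explored at step $k$ together with the number of still-unmatched active half-edges there. Under the coupling of part (2) the backedge pairs coincide, and since the component boundaries in the forest are exactly the points where the $\chi$-increments come from a root (a new component), the two forests $F^\BF(\dd^n)$ and $F^\DF(\dd^n)$ have the same \emph{multiset} of component sizes and, more importantly, the running infima of $X^\BF_n$ and $X^\DF_n$ track the cumulative mass of completed components in the same way. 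Spelling this out: both infimum processes equal $-(\#\{\text{components fully completed by step }k\} - 1)$ plus a correction that is determined by the common backedge list, hence they agree.

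\medskip

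\noindent \textbf{Main obstacle.} The genuinely delicate step is \emph{not} the exchangeability argument (which is a one-line symmetry once phrased correctly) but verifying that the reconstruction map $\Phi$ of \cite[Section 5.1]{CKG.20} really is insensitive to the breadth-first versus depth-first choice \emph{at the level of individual backedge pairs}, i.e.\ that the coupling in part (2) is exact and not merely distributional. This requires writing down the breadth-first analogue of their algorithm carefully — keeping track of how a backedge in $M^\bullet(\dd^n)$ between two active half-edges gets split into two new leaves (as in Remark \ref{rem:forestlabel}) and confirming that the \emph{positions} of those new leaves in the forest exploration order depend only on the step-walk path and not on the exploration discipline. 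I would carry this out in the appendix by induction on the exploration steps, maintaining the invariant that at every step the contents of $\mathcal{S},\mathcal{A},\mathcal{D}$ (as sets of half-edges, ignoring their vertex labels) together with the induced partial matching are the same functional of the path in both orderings; the breadth-first/depth-first distinction only permutes which element of $\mathcal{A}$ is selected next, which changes labels but not the matching nor the induced leaf set.
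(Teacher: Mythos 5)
Your proposal is essentially the paper's own argument: the paper likewise proves part (1) by exchangeability of the uniform half-edge pairing (the paper calls this ``a trivial proof''), and proves part (2) and the ``in particular'' claim by recalling the Conchon-Kerjan--Goldschmidt reconstruction of $X^\DF_{\dd^n}$ from $S^\DF_{\dd^n}$ in the appendix, observing that replacing $\DF$ by $\BF$ throughout gives the breadth-first analogue, and then coupling via $S^\BF_{\dd^n} = S^\DF_{\dd^n}$ with shared auxiliary randomness so that the outputs (including the backedge markers) coincide pathwise. In particular, under this coupling one gets $X^\BF_{\dd^n} = X^\DF_{\dd^n}$ as paths, from which the equality of running infima is immediate; your argument via counting completed components is correct but a detour.

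One imprecision worth flagging: the invariant you propose to maintain by induction --- that ``the contents of $\mathcal{S},\mathcal{A},\mathcal{D}$ (as sets of half-edges, ignoring their vertex labels)\ldots are the same functional of the path in both orderings'' --- is false if read literally. On the same multigraph realization, the active set $\mathcal{A}$ in a breadth-first exploration is a frontier at the current depth, whereas in a depth-first exploration it is a stack of ancestors; they are genuinely different subsets of half-edges at corresponding steps. What is invariant, and what the CKG reconstruction actually relies on, is the \emph{cardinalities} of $\mathcal{S},\mathcal{A},\mathcal{D}$, which are determined by the $S$-walk together with the history of ``new leaf versus real vertex'' decisions; the backedge decision probabilities and the uniform selections in the reconstruction depend only on these cardinalities (written out explicitly in the appendix's formula in terms of $X(k)-\min_{i\le k}X(i) - \#\M(k)$ and $\sum_{j>\tau(k)}D_j$). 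This is the correct invariant to carry through the induction, and with this correction your argument closes the gap you correctly identify as the ``main obstacle.''
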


	The last part of the above lemma tells us something that will be used several times in the sequel, under this coupling the distribution excursions of $X_{\dd^n}^\BF$ and $X_{\dd^n}^\DF$ above the running infimum have the same length. 
	
	\subsubsection{Random degree distribution}
	
	In this subsection we describe some of what happens when we take the sequence $\dd^n = (d_1,\dotsm,d_n)$ to be from i.i.d. samples from a distribution $\nu$ on $\N = \{1,2,\dotsm\}$. We take $(d_j;j\ge 1)$ to be an i.i.d. sequence with common distribution $\nu$. In order to guarantee that a multigraph with degree sequence $\dd^n = (d_1,\dotsm,d_n)$ exists, we replace $d_n$ with $d_{n}+1$ if the sum has the wrong parity.
	
	Recall from the introduction that we write $M_n(\nu)$ for the random graphs with a random degree distribution and forest. We do the same when referencing the forests, i.e. we will write $F^\BF_n(\nu)$ instead of $F^\BF(\dd^n)$ when the degree sequence is random. We do not emphasize this dependence on $\nu$ when describing the random walks, instead we replace subscript $\dd^n$ with just $n$, i.e. we write $X_n^\DF$ instead of $X_{\dd^n}^\DF$. If $\nu$ has finite variance, then it can be shown (see \cite[Section 7.6]{vanderHofstad.17}) there is positive probability that the multigraph is simple, i.e. contains no self-loops nor multiple edges, and it has an explicit asymptotic formula
	\begin{equation}\label{eqn:thetaDef}
		\lim_{n\to\infty} \P(M_n(\nu) \text{ is simple}) = \exp\left(-\frac{\theta}{2} - \frac{\theta^2}{4} \right), \qquad\text{where} \quad \theta = \theta(\nu) = \frac{\E[d_1(d_1-1)]}{\E[d_1]}.
	\end{equation} Moreover, when $M_n(\nu)$ is simple, it is uniformly distributed over all simple graphs with degree distribution $\dd^n$. We write $G_n(\nu)$ for the graph $M_n(\nu)$ conditioned on it being simple.
	
	Under certain conditions on $\nu$, the value of $\theta$ also tells us something about the critical behavior of the graph which dates back to Molloy and Reed's work \cite{MR.95,MR.98}. See also \cite{JL.09}. That is if $V_n$ denotes the size of the largest component of the multigraph $M_n(\nu)$ there is a phase-transition which occurs:
	\begin{enumerate}
		\item If $\theta(\nu)>1$ then $V_n/n \overset{p}{\to} a(\nu)$ for a deterministic constant $a(\nu)>0$.
		\item If $\theta(\nu)\le 1$ then $V_n/n \overset{p}{\to} 0$.
	\end{enumerate}

	We will now restrict our attention to the case where $\nu$ is as in \eqref{eqn:cdelta1} and observe that the right-most assumption in \eqref{eqn:cdelta1} is $\E[d_1^2] =  2\E[d_1]$, which is equivalent to $\theta = 1$.

	In this setting Joseph \cite{Joseph.14} gives a scaling limit of a depth-first walk for the multigraph $M_n(\nu)$, which is very slightly different than what we wrote as $S_n^\DF$. That work was extended by Conchon-Kerjan and Goldschmidt in \cite{CKG.20}. We now recall the scaling limit in the latter reference. Let $\tilde{X} = (\tilde{X}(t);t\ge 0)$ and $\tilde{H} = (\tilde{H}(t);t\ge 0)$ be defined by the change of measure in \eqref{eqn:processStableTilt}. We write $X$ for a L\'{e}vy process with Laplace exponent \eqref{eqn:psiStable} and $H$ is its associated height process. We define the process $J_n = (J_n(k);k\ge 0)$
	\begin{equation}\label{eqn:SnJn}
		J_n(k) = \#\{j\in \{0,1,\dotsm,k-1\} : S_n^\DF(j) = \inf_{j\le \ell \le k} S_n(\ell)\},
	\end{equation}  which is a discretization of \eqref{eqn:heightProcess}.
	\begin{thm} [Joseph \cite{Joseph.14}, Conchon-Kerjan - Goldschmidt \cite{CKG.20}] \label{thm:alphaStableLimitLiterature}
		Fix some $\alpha\in (1,2)$. Let $\nu$ be a distribution satisfying \eqref{eqn:cdelta1} and write $A = \frac{c \Gamma(2-\alpha)}{\delta\alpha(\alpha-1)}$.
		Using the notation above, the following joint convergence holds in $\D^2$:
		\begin{equation*}
			\left(n^{-\frac{1}{\alpha+1}} S_n(\fl{n^{\frac{\alpha}{\alpha+1}} t}) , n^{-\frac{\alpha-1}{\alpha+1}} J_n(\fl{n^{\frac{\alpha}{\alpha+1}} t});t\ge 0 \right) \weakarrow \left(\tilde{X}(t),\tilde{H}(t);t\ge 0\right).
		\end{equation*}
	\end{thm}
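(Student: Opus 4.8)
Theorem~\ref{thm:alphaStableLimitLiterature} is recalled from \cite{Joseph.14} and \cite{CKG.20}, so a ``proof'' amounts to assembling those references; still, here is the structure one would follow. It splits cleanly into a statement about the walk $S_n$ and a statement about its discrete height process $J_n$.

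\emph{Step 1: the increments are centred size-biased degrees.} In the depth-first construction the vertices $v_1^\DF,v_2^\DF,\dotsm$ are revealed by following uniformly chosen half-edges, so -- away from the roots of the components, which are negligible on the time scales below -- the degrees $\deg(v_j^\DF)$ are obtained by sampling the degree multiset without replacement, biased by degree. Since the $d_j$ are i.i.d.\ with $\nu(k)\sim ck^{-(2+\alpha)}$, the size-biased law $\widehat\nu(k)=k\nu(k)/\delta$ has tail $\widehat\nu((x,\infty))\sim\frac{c}{\delta\alpha}x^{-\alpha}$, hence lies in the domain of attraction of a spectrally positive $\alpha$-stable law with L\'evy measure $\frac{c}{\delta}r^{-1-\alpha}\,dr$; computing $\int_0^\infty(e^{-\lambda r}-1+\lambda r)\frac{c}{\delta}r^{-1-\alpha}\,dr$ gives exactly $\Psi(\lambda)=A\lambda^\alpha$ with $A=\frac{c\Gamma(2-\alpha)}{\delta\alpha(\alpha-1)}$. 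The criticality hypothesis $\E[d_1^2]=2\E[d_1]$ says precisely that $\widehat\nu$ has mean $2$, so the increments $\deg(v_j^\DF)-2$ are centred to leading order; and since $\deg(v)-2\ge-1$, the walk $S_n$ has downward jumps bounded by $1$, so any subsequential limit is automatically spectrally positive.

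\emph{Step 2: $S_n\weakarrow\tilde X$.} If the explored degrees were genuinely i.i.d.\ copies of $\widehat\nu$, the functional limit theorem for heavy-tailed i.i.d.\ sums would give $n^{-\frac1{\alpha+1}}S_n(\fl{n^{\frac{\alpha}{\alpha+1}}t})\weakarrow X(t)$ with $X$ the $\alpha$-stable process identified above. The constraint that pairing half-edges depletes a finite pool -- equivalently, that the exploration is conditioned to use all $\ell_n=\sum_j d_j\sim\delta n$ half-edges and that the sampling is without replacement -- introduces a correction which, under the same rescaling, converges to the exponential tilt of $X$ recorded in \eqref{eqn:processStableTilt}. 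This is Joseph's analysis \cite{Joseph.14}, with the transparent L\'evy-process description supplied by \cite{CKG.20}. One also checks that the walk of \eqref{eqn:swalks} differs from the walk actually used in \cite{Joseph.14} by an error that is uniformly $o(n^{\frac1{\alpha+1}})$ on compact time sets, which is routine bookkeeping of the roots of the small components.

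\emph{Step 3: joint convergence with $J_n$, and the main obstacle.} The quantity $J_n(k)$ of \eqref{eqn:SnJn} is precisely the discrete height process of $S_n$ in the sense of Duquesne and Le Gall, the natural discretization of the local-time set \eqref{eqn:heightProcess}. Because the map ``path $\mapsto$ its height process'' is \emph{not} continuous for the Skorohod topology, one cannot invoke a continuous-mapping argument; instead one applies the invariance principle for height processes of \cite{DL.02}, whose hypotheses -- convergence of the rescaled walks to $X$, finiteness of $\int_1^\infty d\lambda/\Psi(\lambda)$ (immediate since $\Psi(\lambda)=A\lambda^\alpha$ with $\alpha>1$), and a tightness condition on the descending-ladder structure (automatic here, the downward jumps of $S_n$ being bounded) -- are in force. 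This gives $(S_n,J_n)\weakarrow(X,H)$ jointly in $\D^2$; transferring through the change of measure \eqref{eqn:processStableTilt}, which is applied simultaneously to $(X,H)$ and whose finite-$n$ Radon--Nikodym derivatives are uniformly integrable, upgrades this to $(S_n,J_n)\weakarrow(\tilde X,\tilde H)$. The genuine difficulty is exactly this last step: making the height-process invariance principle interact correctly with the exponential tilt, which is the part of \cite{CKG.20} requiring the most work; everything preceding it is either classical or routine.
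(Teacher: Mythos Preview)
The paper does not prove Theorem~\ref{thm:alphaStableLimitLiterature}; it is quoted verbatim from \cite{Joseph.14,CKG.20} as input to the rest of the argument, so there is no ``paper's own proof'' to compare against. You correctly flag this at the outset, and your sketch is a reasonable outline of how those references proceed: Step~1 (size-biased degrees lie in the domain of attraction of a spectrally positive $\alpha$-stable law, criticality centres the increments) and Step~2 (the depletion effect from sampling without replacement produces the tilt \eqref{eqn:processStableTilt}) are accurate summaries of Joseph's contribution as reinterpreted by \cite{CKG.20}.

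Your Step~3 contains one soft spot. You write that the Duquesne--Le~Gall invariance principle gives $(S_n,J_n)\weakarrow(X,H)$ and then one ``transfers through the change of measure'' using uniform integrability of finite-$n$ Radon--Nikodym derivatives. But $S_n$ is \emph{not} an i.i.d.\ walk --- it is the without-replacement, size-biased walk --- so the Duquesne--Le~Gall theorem from \cite{DL.02} does not apply to $S_n$ directly, and $S_n$ does not converge to the untilted $X$. What one actually needs is either (i) to identify a genuine i.i.d.\ walk $S_n^{\mathrm{iid}}$ to which \cite{DL.02} applies and an explicit finite-$n$ change of measure relating its law to that of $S_n$ (and then prove the claimed uniform integrability, which is where the real work is), or (ii) to argue convergence of $(S_n,J_n)$ to $(\tilde X,\tilde H)$ directly by adapting the proof of the height-process invariance principle to the non-i.i.d.\ setting. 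Conchon-Kerjan and Goldschmidt follow a route closer to~(i), but the passage is not as automatic as your phrasing suggests; you have correctly located the genuine difficulty, but ``uniformly integrable'' is doing a lot of unexamined work in your sentence.
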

	A similar result is obtained in \cite{BvdHavL.10} under a finite third moment condition on the measure $\nu$ where the limiting process is rescaling of Brownian motion with a different parabolic drift.
	
	Crucially for their descriptions of the limiting graphs, the authors of \cite{CKG.20} also develop the excursion theory for the process $\tilde{X}$ (which is notated as $\tilde{L}$ in that work). Proposition 3.9 in \cite{CKG.20} shows that the excursions of $\tilde{X},\tilde{H}$, conditioned on their length being exactly $x$ are distributed as $(\tilde{\e}_x^{(\delta)},\tilde{\hp}^{(\delta)}_x)$ defined in \eqref{eqn:tilting}.
	
	\subsubsection{Continuum Graph Limits}
	
	We now heuristically describe how the authors of \cite{CKG.20} obtain their metric space scaling limit. Let $H_n$ be the height process on the forest $F_n^\DF(\nu)$. That is $H_n(k)$ is the distance in $F^\DF_n(\nu)$ from vertex $u_k^\DF$ to the root in its connected component. This process $H_n$ satisfies \cite{LeGall.05}:
	\begin{equation*}
		H_n(k) = \#\{j\in \{0,\dotsm , k-1\}: X_n^\DF(j) = \inf_{j\le \ell\le k} X_n^\DF(\ell)\}.
	\end{equation*} 
	
	To examine the components of the graph $M_n(\nu)$, the authors of \cite{CKG.20} look at the collection of excursions of the processes $X_n^\DF$ and $H_n$. These are defined as follows:
	\begin{equation*}
		\begin{split}
			\widehat{X}_{n,i}^\DF(k) &= X_n^\DF (\sigma_{n}(i-1) + k) - X_n^\DF(\sigma_n(i-1)),\\
			\widehat{H}_{n,i}(k) &= H_n(\sigma(i-1)+k),
		\end{split} \qquad k = 0,\dotsm, \sigma_{n}(i)-\sigma_n(i-1)
	\end{equation*} where $\sigma_n(i) = \inf\{j: X_n^\DF(j) = -i\}$ is the first hitting time of level $-i$. The process $\widehat{X}_{n,i}^\DF$ starts at zero and is non-negative until it hits level $-1$ at time $k = \sigma_n(i)-\sigma_n(i-1)$.  The process $\widehat{H}_{n,i}$ is strictly positive for $k = 1,\dots, \sigma_n(i)-\sigma_n(i-1)-1$. We extend both of these by constancy for $k>\sigma_n(i)-\sigma_n(i-1)$. These processes encode the tree structure \cite{LeGall.05} of the $i^\text{th}$ connected component of the forest $F_n^\DF(\nu)$. By the construction of $M_n^\DF(\nu)$, this orders the components of the forest $F_n^\DF(\nu)$ in a manner size-biased by the number of edges in the component. There are further only a finite number of indexes $i$ such that $\sigma_n(i)-\sigma_n(i-1)\neq 1$ since for sufficiently large $i$ the $i^\text{th}$ component of $F_n^\DF(\nu)$ is simply an isolated vertex.
	
	To study the large components of the graph, we instead reorder the excursions by decreasing lengths with ties broken arbitrarily. Denote this new ordering by omitting the ``widehat'' notation: $(X_{n,i}^\DF;i\ge 1) = \left((X_{n,i}^\DF(k);k\ge 0);i\ge 1\right)$ and $(H_{n,i};i\ge 1) = \left((H_{n,i}(k);k\ge 0);i\ge 1\right)$.
	
	The $i^\text{th}$ excursion $X_{n,i}^\DF$ may not tell us information about the $i^\text{th}$ largest component of $M_n(\nu)$, $G_n^i$. This is because the forest $F_n^\DF(\nu)$ contains additional vertices which could change the ordering of the components. I.e. if $G_n^i$ had $10$ vertices and $0$ df backedges and component $G_n^{i+1}$ had $9$ vertices and 2 df backedges then the corresponding components in $F_n^\DF(\nu)$ will have $10$ and $13$ vertices respectively. In turn, their indices will appear in the opposite order. We also note that the excursions of the process $X_n^\DF$ do not identically correspond to the excursions of the process $S_n^\DF$ discussed previously. While this may cause some problems in the discrete, in the large $n$ limit neither of these problems are relevant as we will shortly explain.
	
	Before turning to the scaling limits, let $T_n^1$ be the largest connected component, i.e. tree, contained in $F_n^\DF(\nu)$. This is encoded by $X_{n,1}^\DF$ and $H_{n,1}$. This tree may contain df backedges and these backedges appear in pairs. For concreteness, suppose that there are $m\ge 1$ of these pairs. These can be indexed by $(l_1,r_1),\dotsm, (l_m,r_m)$. This means that the $l_i^\text{th}$ vertex explored in the depth-first exploration of the largest component of $T_n^1$ will be paired with the $r_i^\text{th}$ vertex explored in the corresponding component of $M_n(\nu)$. See Figure \ref{fig:excursionsbfwithmarks} for the analogous pairs for the breadth-first labeling of the component in Figure \ref{fig:bfforestlabeling}. We now define $\PP_{n,1}^\DF$ as the collection of points
	\begin{equation*}
		\PP_{n,1}^\DF = \left\{ \left( n^{-\frac{\alpha}{\alpha+1}} l_i, n^{-\frac{\alpha}{\alpha+1}} r_i\right): i =1,\dotsm, m \right\}.
	\end{equation*}  When there are no df backedges present, just define the set as the empty set. We call this set the set of \textit{marks}, and we can do the same thing to each of the other connected components as well to get sets $\PP_{n,i}^\DF$.
	
	\begin{figure}
		\centering
		\includegraphics[width=0.7\linewidth]{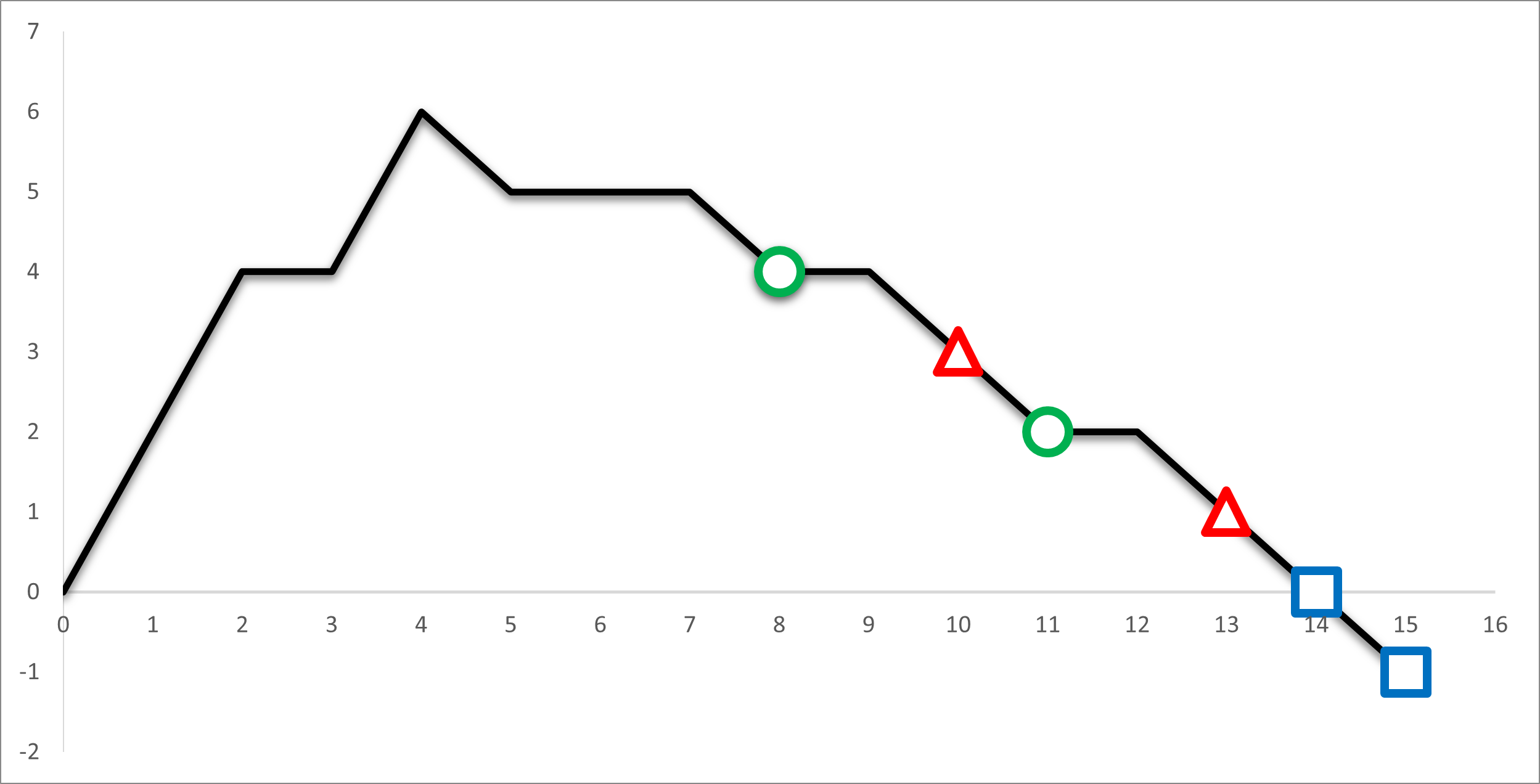}
		\caption{The excursion $X_{n,i}^\BF$ associated with the component shown in Figure \ref{fig:bfforestlabeling} with the marks $\PP^\BF_{n,i}$ included. The vertices $u_8^\BF$ and $u_{11}^\BF$ are paired, $u_{10}^\BF$ and $u_{13}^\BF$ are paired, and $u_{14}^\BF$ and $u_{15}^\BF$ are paired. These are represented by the green circles, red triangles and blue squares above.}
		\label{fig:excursionsbfwithmarks}
	\end{figure}

	An important step in how the authors of \cite{CKG.20} proved the components $G_n^1, G_n^2,\dotsm$ have scaling limits was showing scaling limits of the processes $X_{n,i}^\DF$ and $H_{n,i}$ and of the set of marks $\PP_{n,i}^\DF$. The convergence of the sets $\PP_{n,i}^\DF$ is with respect to the vague topology of its associated counting measure. Namely they prove \cite[Proposition 5.16]{CKG.20}
	\begin{equation}\label{eqn:markExcursionConv}
		\begin{split}
			&\left(\left(n^{-\frac{1}{\alpha+1}} X_{n,i}^\DF(\fl{n^{\frac{\alpha}{\alpha+1}} t});t\ge 0\right),\left( n^{-\frac{\alpha-1}{\alpha+1}} H_{n,i}(\fl{n^{\frac{\alpha}{\alpha+1} }t}); t\ge 0\right), \PP_{n,i}^\DF ;i\ge 1\right) \\
			&\qquad\qquad\qquad \weakarrow \left( \tilde{e}_i,\tilde{h}_i,\PP_i;i\ge 1\right),
		\end{split}
	\end{equation} for some discrete sets $(\PP_i;i\ge 1)$. Here the convergence in the first two coordinates is with respect to the Skorohod topology and the convergence in the third coordinate is with respect to the vague topology of its associated counting measure and then the product topology is taken over the index $i\ge 1$.

	The limiting set $\PP_i$ can be described as follows. Let $(\QQ_i;i\ge 1)$ denote an i.i.d. collection of Poisson point processes on $\R_+\times\R_+$ with intensity $\frac{1}{\delta}\Leb$. Only finitely many of these points $(s,y)\in \QQ_i$ will satisfy $0\le y\le \tilde{e}_i(s)$, and index these as $(s^1,y^1),\dotsm, (s^m,y^m)$ for some $m$. The set $\PP_i$ is then the collection
	\begin{equation*}
		\PP_i = \{(s^p,t^p): p = 1,\dotsm, m\} \qquad t^p = \inf\{u\ge s^p: \tilde{e}_i(u)\le y^p\}.
	\end{equation*}
	
	This, in turn, allowed them to show that the ordered sequence of components of $M_n(\nu)$ and $G_n(\nu)$ by conditioning converge after proper rescaling in a product \ghp topology to the sequence of continuum random graphs
	\begin{equation} \label{eqn:graphSequenceAlpha}
		\left( \M_i;i\ge 1\right) = 	\left(\GG(\tilde{h_i}, \tilde{e_i} , \QQ_i );i\ge 1\right),
	\end{equation} where $\QQ_i$ were defined above.

	Let us summarize these results in a theorem for easy reference. 
	\begin{thm} (Conchon-Kergan - Goldschmidt \cite{CKG.20})  \label{thm:ckgThm2}
		Let $(G_n^i;i\ge 1)$ denote the components of the critical random graph $M_n(\nu)$ ordered by decreasing number of vertices and viewed as pointed measured metric spaces.
		
		Under the assumptions of Theorem \ref{thm:alphaStableLimitLiterature}, the convergence in \eqref{eqn:markExcursionConv} holds in the product topology (product over the index $k$). Jointly with \eqref{eqn:markExcursionConv}, the weak convergence 
		\begin{equation}\label{eqn:graphStableconv}
			\left(\scale(n^{-\frac{\alpha-1}{\alpha+1}} , n^{-\frac{\alpha}{\alpha+1}}) G_n^i ; i \ge 1 \right) \weakarrow (\M_i; i\ge 1)
		\end{equation} holds with respect to the product \ghp topology, where the sequence $(\M_i;i\ge 1)$ is distributed as \eqref{eqn:graphSequenceAlpha}.
		
	\end{thm}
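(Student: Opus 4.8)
This theorem only repackages results of \cite{CKG.20}, so the plan is to assemble the relevant statements there and reconcile the differing ordering conventions; essentially nothing needs to be proved from scratch. \emph{Step 1 (excursions and marks).} The joint convergence \eqref{eqn:markExcursionConv}, marks included, is \cite[Proposition 5.16]{CKG.20}. Since the product Skorokhod topology on $\D^\infty$ together with the product vague topology on the counting measures associated to the mark sets is just the topology of coordinatewise convergence, and since each coordinate sequence is weakly convergent hence tight while a countable product of tight families is tight, the joint convergence over all $i\ge 1$ is automatic from the joint convergence over each finite collection of indices established in \cite{CKG.20}.

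\emph{Step 2 (metric spaces).} Given Step 1, the convergence \eqref{eqn:graphStableconv} of the rescaled forest components follows from the \ghp scaling limit of \cite{CKG.20} by the continuous mapping theorem: the continuum graph $\GG(h,g,Q)$ of Section \ref{sec:realGraphs} is a measurable functional of $(h,g,Q)$ which is shown in \cite{CKG.20} to be continuous, for the \ghp topology, at the random triples $(\tilde h_i,\tilde e_i,\QQ_i)$ occurring in the limit; the rescaled $i$-th component $G_n^i$, viewed as the $i$-th tree of $F_n^\DF(\nu)$ with the mark-pairs $\PP_{n,i}^\DF$ identified, is the discrete counterpart of $\GG(\tilde h_i,\tilde e_i,\QQ_i)$, so \eqref{eqn:markExcursionConv} pushes forward to the convergence toward the sequence $(\M_i;i\ge1)$ of \eqref{eqn:graphSequenceAlpha}, jointly with \eqref{eqn:markExcursionConv}. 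By Lemma \ref{lem:convBFDFequal} one may run all of this with the breadth-first construction in place of the depth-first one: under the coupling there the excursion lengths, and hence the component structures, agree, so the conclusion transfers verbatim.

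\emph{Step 3 (reordering), and the main obstacle.} The one point genuinely requiring attention is that Steps 1--2 order the components by decreasing excursion length of $X_n^\DF$, i.e.\ by decreasing number of vertices of the corresponding tree of $F_n^\DF(\nu)$, whereas the theorem orders the graphs $G_n^i$ of $M_n(\nu)$ by decreasing number of vertices. The $i$-th tree of $F_n^\DF(\nu)$ has $\#G_n^i + 2\,\sur(G_n^i)$ vertices, and passing between $S_n^\DF$ and $X_n^\DF$ perturbs the counts similarly; in each case the perturbation is of order the surplus, and the surplus of the $i$-th macroscopic component is $O_\P(1)$ — it converges, jointly with everything above, to the a.s.\ finite number of marks of $\M_i$, by the convergence of $\PP_{n,i}^\DF$ — while $\#G_n^i$ is of order $n^{\alpha/(\alpha+1)}$. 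Hence for each fixed $N$, with probability tending to one the spacings $\#G_n^i-\#G_n^{i+1}$ for $i<N$ dominate these perturbations, so the two orderings agree on the first $N$ components; this makes the reindexing asymptotically trivial and, combined with Steps 1--2, yields the stated joint convergence. The only real work is thus the quantitative comparison in Step 3 of the surplus against the gaps between consecutive component sizes, and this is exactly what the mark-process convergence of \cite{CKG.20} supplies, so in practice the proof consists of pointing to the appropriate statements there.
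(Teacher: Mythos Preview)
The paper does not prove this theorem; it is attributed to \cite{CKG.20}, with the only gloss being the remark following the statement that the joint convergence of \eqref{eqn:markExcursionConv} and \eqref{eqn:graphStableconv} follows from the proof (if not the statement) of Theorem 1.1 there. Your proposal correctly recognizes this and goes further, outlining how the ingredients of \cite{CKG.20} assemble into the stated form; your Step 3 on reordering is exactly the issue the paper flags informally in the paragraph beginning ``The $i^{\text{th}}$ excursion $X_{n,i}^\DF$ may not tell us information\ldots'', and your surplus-versus-gaps argument is sound (it tacitly uses that the limiting lengths $\zeta_i$ are a.s.\ distinct, which holds here). One small point: the breadth-first digression at the end of your Step 2 is not needed for this theorem, since the components $G_n^i$ of $M_n(\nu)$ do not depend on the exploration order; that transfer is the content of Lemma \ref{lem:18} and the surrounding discussion, not of Theorem \ref{thm:ckgThm2}.
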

	
	\begin{remark}
		Theorem 1.1 in \cite{CKG.20} does not state the joint convergence between equations \eqref{eqn:markExcursionConv} and \eqref{eqn:graphStableconv}; however, the proof of said theorem shows there is joint convergence.
	\end{remark}
	
	These recalled results from \cite{CKG.20} are on the convergence for \textit{depth-first} objects. However, symmetry between depth-first and breadth-first constructions described above in Lemma \ref{lem:convBFDFequal} allow us to have similar results for the analogous \textit{breadth-first} object. Consequently, if we let $X_{n,i}^\BF$ be the breadth-first walk of the $i^\text{th}$ largest component of $F_n^\BF(\nu)$, or, equivalently stated, it is the $i^\text{th}$ longest excursion of $X_n^\BF$ above its running minimum, then
	\begin{equation}\label{eqn:excursionBFstable}
		\left( \left( n^{-\frac{1}{\alpha+1}} X_{n,i}^\BF (\fl{n^{\frac{\alpha}{\alpha+1}} t}) ;t\ge 0\right) ;i\ge1 \right) \weakarrow \left( \tilde{e}^*_i; i\ge 1\right),
	\end{equation} where $\left(\tilde{e}_i^*;i\ge 1\right) \overset{d}{=} \left(\tilde{e}_i;i\ge 1 \right)$. More importantly for our work, there are auxiliary processes described in \cite[pg. 30, 32]{CKG.20} and recalled in the appendix that can easily be defined in the same way for a breadth-first construction. In particular, these auxiliary processes include the collection of marks $\PP^\DF_{n,k}$ recalled above. Therefore, we can extend the convergence \eqref{eqn:excursionBFstable} by using Lemma \ref{lem:convBFDFequal} to include collection of marks in an analogous way to the depth-first marks:
	\begin{lem}\label{lem:18}
		There exists a finite set $\PP_{n,i}^\BF\subset\R_+^2$ of marks corresponding to the $i^\text{th}$ largest component of $F_n^\BF(\nu)$ which keep track of the bf backedges such that
		\begin{equation*}
			\left( X_{n,i}^\BF , \PP_{n,i}^\BF ; i \ge 1\right) \overset{d}{=} \left(X_{n,i}^\DF, \PP_{n,i}^\DF;i\ge 1 \right).
		\end{equation*}
	\end{lem}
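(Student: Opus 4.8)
The plan is to \emph{define} $\PP_{n,i}^\BF$ as the verbatim breadth-first analogue of $\PP_{n,i}^\DF$ and then reduce the distributional identity to Lemma~\ref{lem:convBFDFequal}. Concretely, run the breadth-first construction of $M^\BF(\dd^n)$ together with its forest $F^\BF(\dd^n)$ and the leaf labelling $u_1^\BF, u_2^\BF,\dots$ of Remark~\ref{rem:forestlabel}; within the $i^\text{th}$ largest component list its bf backedge pairs as $(l_1,r_1),\dots,(l_m,r_m)$ in the breadth-first labelling of that component, and set $\PP_{n,i}^\BF = \{(n^{-\frac{\alpha}{\alpha+1}}l_j,\, n^{-\frac{\alpha}{\alpha+1}}r_j): 1\le j\le m\}$, with $\PP_{n,i}^\BF=\emptyset$ when the component carries no bf backedge. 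This is by construction the breadth-first counterpart of $\PP_{n,i}^\DF$, so the remaining work is purely distributional.

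The key observation is that the array $\big((X_{n,i}^\BF,\PP_{n,i}^\BF);i\ge 1\big)$ is obtained by applying a \emph{fixed} measurable functional $\Theta$ to the raw pair $\big(X_n^\BF,\mathcal{B}_n^\BF\big)$, where $\mathcal{B}_n^\BF$ denotes the finite set of bf backedge index-pairs $\{(l,r)\}$: one splits $X_n^\BF$ into its excursions above the running minimum (equivalently at the successive hitting times of new minima), assigns to each excursion the backedge pairs lying inside the corresponding component --- which is well defined because a backedge never joins two distinct components --- reorders the excursion/mark pairs by decreasing excursion length with ties broken in a fixed way, and performs the rescaling of \eqref{eqn:excursionBFstable} (space by $n^{-\frac{\alpha}{\alpha+1}}$, value by $n^{-\frac{1}{\alpha+1}}$ for the walk). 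The very same $\Theta$ applied to $\big(X_n^\DF,\mathcal{B}_n^\DF\big)$ returns $\big((X_{n,i}^\DF,\PP_{n,i}^\DF);i\ge 1\big)$. Hence it suffices to prove the joint identity $\big(X_n^\BF,\mathcal{B}_n^\BF\big)\overset{d}{=}\big(X_n^\DF,\mathcal{B}_n^\DF\big)$.

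To get this joint identity I would invoke the reconstruction of Section~5.1 of \cite{CKG.20}, recalled in the appendix: there is a measurable map $\Phi$ that takes a step walk $s$ (in the role of $S_n^\DF$) together with an independent sequence $U$ of auxiliary uniform variables and outputs a walk (in the role of $X_n^\DF$) paired with its collection of backedge index-pairs, in such a way that $\big(X_n^\DF,\mathcal{B}_n^\DF\big)=\Phi(S_n^\DF,U)$. The ``trivial alteration'' for the breadth-first exploration amounts to the assertion that $\big(X_n^\BF,\mathcal{B}_n^\BF\big)=\Phi(S_n^\BF,U)$ with the \emph{same} $\Phi$ and the same $U$: the only discrepancy between the two explorations --- which alive half-edge is killed next --- is already absorbed into the input walk ($S_n^\BF$ versus $S_n^\DF$), so the half-edge revelation bookkeeping that inserts the extra leaves and detects backedges is regime-independent. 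Since $S_n^\BF\overset{d}{=}S_n^\DF$ by part~(1) of Lemma~\ref{lem:convBFDFequal} and $U$ is independent of both walks, pushing forward through $\Phi$ gives $\big(X_n^\BF,\mathcal{B}_n^\BF\big)\overset{d}{=}\big(X_n^\DF,\mathcal{B}_n^\DF\big)$; composing with $\Theta$ yields Lemma~\ref{lem:18}.

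The main obstacle is precisely the claim that the breadth-first version of the CKG algorithm is the same map $\Phi$ --- i.e. that nothing in Section~5.1 of \cite{CKG.20} uses the depth-first order beyond what the walk $S_n^\DF$ already encodes. This is the content of the appendix and holds because the rule converting a freshly killed half-edge into a new child, a new pair of leaves, or the far end of a backedge depends only on the current multiset of alive half-edges and the uniform used to sample the partner. As a consistency check on the mark part one may use the coupling of part~(2) of Lemma~\ref{lem:convBFDFequal}: under it $\mathcal{B}_n^\BF=\mathcal{B}_n^\DF$ and the running infima of $X_n^\BF$ and $X_n^\DF$ coincide, so the excursion intervals --- and hence the mark families attached to them --- are literally equal (the walks themselves are not, which is why part~(1) is still needed for the full joint statement).
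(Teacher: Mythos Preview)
Your proposal is correct and follows essentially the same route as the paper: define the breadth-first marks by the verbatim analogue of the depth-first construction, observe that the entire collection $(X_{n,i},\PP_{n,i};i\ge 1)$ is a fixed measurable functional of the walk $S_n$ together with auxiliary uniforms (the appendix algorithm), and then invoke $S_n^\BF\overset{d}{=}S_n^\DF$ from Lemma~\ref{lem:convBFDFequal}(1). Your explicit factorisation into $\Theta\circ\Phi$ and the isolation of the one nontrivial point --- that the appendix algorithm is agnostic to BF versus DF once the walk $S_n$ is given --- makes the logic slightly more transparent than the paper's prose, but the argument is the same.
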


	We now state the following lemma:
	\begin{lem} \label{lem:stableExcursionsSameSize} Couple $X_n^\BF$ and $X_n^\DF$ as in Lemma \ref{lem:convBFDFequal} so they have the same excursion intervals. Then, under the assumptions of Theorem \ref{thm:ckgThm2},
		any joint subsequential limit of \eqref{eqn:markExcursionConv}, \eqref{eqn:graphStableconv} and \eqref{eqn:excursionBFstable} satisfies for each fixed $i\ge 1$:
		\begin{equation}\label{eqn:stableExcursionsSameSize}
			\zeta(\tilde{e}_i^*) = \zeta(\tilde{e}_i)= \mu_i(\M_i),
		\end{equation} almost surely.
	\end{lem}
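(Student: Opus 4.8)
The plan is to work on the probability space furnished by the coupling of Lemma~\ref{lem:convBFDFequal}, to pass to a subsequence along which the convergences \eqref{eqn:markExcursionConv}, \eqref{eqn:graphStableconv} and \eqref{eqn:excursionBFstable} hold jointly (such a subsequence exists because each of these sequences converges on its own, hence is tight), and then to invoke the Skorokhod representation theorem so that all three convergences, together with $\scale(n^{-\frac{\alpha-1}{\alpha+1}},n^{-\frac{\alpha}{\alpha+1}})G_n^i\to\M_i$, may be taken to hold almost surely. Under this coupling the running infima of $X_n^\BF$ and $X_n^\DF$ agree, so the hitting times $\sigma_n(j)=\inf\{k:X_n^\DF(k)=-j\}=\inf\{k:X_n^\BF(k)=-j\}$ are common to both walks, and hence the excursions of $X_n^\BF$ and of $X_n^\DF$ above their running infima are over exactly the same intervals. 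After re-ordering by decreasing length, the $i$-th excursions $X_{n,i}^\BF$ and $X_{n,i}^\DF$ are therefore defined over a common interval, of some length $\ell_{n,i}$, and both walks equal $-1$ at every time $\ge\ell_{n,i}$.

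First I would identify $\zeta(\tilde e_i)=\mu_i(\M_i)$. By Theorem~\ref{thm:ckgThm2} the limit is $\M_i=\GG(\tilde h_i,\tilde e_i,\QQ_i)$, obtained from the tree $\T_{\tilde h_i}$ by identifying finitely many pairs of points; this identification pushes the mass measure of $\T_{\tilde h_i}$ forward without altering its total mass, so $\mu_i(\M_i)$ is the total mass of $\T_{\tilde h_i}$, namely its lifetime $\zeta(\tilde h_i)$. Since the excursion of the tilted height process $\tilde H$ straddling a given excursion interval of $\tilde R=\tilde X-\tilde I$ is strictly positive on exactly that interval, $\zeta(\tilde h_i)=\zeta(\tilde e_i)$, whence $\mu_i(\M_i)=\zeta(\tilde e_i)$. (Alternatively this follows from Joseph's result \cite{Joseph.14} that $n^{-\frac{\alpha}{\alpha+1}}$ times the number of vertices of $G_n^i$ converges to $\zeta_i=\zeta(\tilde e_i)$, together with the convergence of total masses implied by \eqref{eqn:graphStableconv}.) The same circle of ideas also yields $n^{-\frac{\alpha}{\alpha+1}}\ell_{n,i}\to\zeta(\tilde e_i)$: the quantity $\ell_{n,i}$ is the number of vertices of the $i$-th largest tree of $F_n^\DF(\nu)$, which exceeds the number of vertices of the corresponding component of $M_n(\nu)$ by twice its surplus, and the surplus is $O_\P(1)$ by \eqref{eqn:markExcursionConv}; since the limiting component sizes $\zeta_1>\zeta_2>\cdots$ are almost surely distinct, for each fixed $N$ the ordering of the $N$ largest components is, with probability tending to $1$, unaffected by these $O_\P(1)$ extra vertices, so the rescaled forest-tree sizes have the same limits $\zeta_i$ as the rescaled graph-component sizes.

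Next I would establish the one-sided bound $\zeta(\tilde e_i^*)\le\zeta(\tilde e_i)$ almost surely. Since $X_{n,i}^\BF(k)=-1$ for every $k\ge\ell_{n,i}$, the rescaled path $f_n(t):=n^{-\frac{1}{\alpha+1}}X_{n,i}^\BF(\fl{n^{\frac{\alpha}{\alpha+1}}t})$ satisfies $-n^{-\frac{1}{\alpha+1}}\le f_n(t)\le 0$ whenever $\fl{n^{\frac{\alpha}{\alpha+1}}t}\ge\ell_{n,i}$, and $f_n\ge -n^{-\frac{1}{\alpha+1}}$ everywhere. Taking Skorokhod time-changes $\lambda_n$ converging to the identity that witness $f_n\to\tilde e_i^*$, and using $n^{-\frac{\alpha}{\alpha+1}}\ell_{n,i}\to\zeta(\tilde e_i)$ from the previous step, for any fixed $t>\zeta(\tilde e_i)$ we have $\lambda_n(t)>n^{-\frac{\alpha}{\alpha+1}}\ell_{n,i}$ for all large $n$, hence $\tilde e_i^*(t)=\lim_n f_n(\lambda_n(t))=0$; letting $n\to\infty$ in $f_n\ge -n^{-\frac1{\alpha+1}}$ also gives $\tilde e_i^*\ge 0$. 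Thus $\tilde e_i^*$ vanishes on $(\zeta(\tilde e_i),\infty)$, so $\zeta(\tilde e_i^*)\le\zeta(\tilde e_i)$.

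Finally I would upgrade this to an equality. By \eqref{eqn:excursionBFstable} — a consequence of Lemma~\ref{lem:convBFDFequal}(1) together with \eqref{eqn:markExcursionConv} — the limiting breadth-first excursions satisfy $(\tilde e_i^*;i\ge1)\overset{d}{=}(\tilde e_i;i\ge1)$, so $\zeta(\tilde e_i^*)\overset{d}{=}\zeta(\tilde e_i)$ for each $i$; combined with $0\le\zeta(\tilde e_i^*)\le\zeta(\tilde e_i)$ almost surely on the coupled space, and the elementary fact that a nonnegative random variable almost surely dominated by one with the same law must coincide with it almost surely (apply $x\mapsto\arctan x$: $\E[\arctan\zeta(\tilde e_i)-\arctan\zeta(\tilde e_i^*)]=0$ with a nonnegative integrand, so $\arctan\zeta(\tilde e_i)=\arctan\zeta(\tilde e_i^*)$ a.s.), we conclude $\zeta(\tilde e_i^*)=\zeta(\tilde e_i)$ almost surely for each $i$; intersecting over $i\ge1$ and recalling $\zeta(\tilde e_i)=\mu_i(\M_i)$ finishes the proof. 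The hard part is exactly this last step: the excursion-lifetime functional is not continuous for the Skorokhod topology, so the convergence results by themselves deliver only $\zeta(\tilde e_i^*)\le\zeta(\tilde e_i)$, and ruling out the possibility that the breadth-first limiting excursion ``dies early'' must be done through the distributional identity \eqref{eqn:excursionBFstable} rather than pathwise; a secondary, purely technical, nuisance is the bookkeeping in the second step concerning the surplus vertices of the forest and the asymptotically irrelevant difference between ordering components by vertex count and by excursion length.
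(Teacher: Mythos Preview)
Your proof is correct and, at its core, follows the same idea as the paper: exploit the coupling of Lemma~\ref{lem:convBFDFequal} so that the breadth-first and depth-first excursion intervals coincide, then invoke the results of \cite{CKG.20,Joseph.14} for the identification $\zeta(\tilde e_i)=\mu_i(\M_i)$. The paper's own proof is three sentences long and simply says ``the excursion intervals of $X_n^\DF$ and $X_n^\BF$ have the same length; $\zeta(\tilde e_i)=\mu_i(\M_i)$ follows from \cite{CKG.20}, and that implies $\zeta(\tilde e_i^*)=\mu_i(\M_i)$ as well''.

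Where you genuinely diverge is in how you justify $\zeta(\tilde e_i^*)=\zeta(\tilde e_i)$. The paper implicitly relies on the fact that \cite{CKG.20,Joseph.14} already establish convergence of the rescaled excursion \emph{lengths} $n^{-\alpha/(\alpha+1)}\ell_{n,i}$ (not merely convergence of the excursions) to $\zeta(\tilde e_i)$; since under the coupling $\ell_{n,i}$ is also the BF excursion length, and by Lemma~\ref{lem:18} the same length-convergence statement transfers verbatim to the BF side, the equality follows. You instead give a self-contained argument: a pathwise one-sided bound $\zeta(\tilde e_i^*)\le\zeta(\tilde e_i)$ (using that $f_n$ is eventually $-n^{-1/(\alpha+1)}$ past rescaled time $n^{-\alpha/(\alpha+1)}\ell_{n,i}$), and then the elementary observation that $\zeta(\tilde e_i^*)\overset{d}{=}\zeta(\tilde e_i)$ together with almost-sure domination forces almost-sure equality. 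This is a nice alternative because it makes explicit the point you correctly flag --- that the lifetime functional $\zeta$ is not Skorokhod-continuous --- and it sidesteps having to locate the precise length-convergence statement in the cited references. The cost is a slightly longer argument and the auxiliary bookkeeping about the $O_\P(1)$ surplus vertices, which you handle correctly.
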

	\begin{proof}
		Under this conditioning, the excursion intervals of $X_n^\DF$ and $X_n^\BF$ have the same length. The equality $\zeta(\tilde{e}_i) = \mu_i(\M_i)$ follows from \cite{CKG.20}, and that implies $\zeta(\tilde{e}_i^*) = \mu_i(\M_i)$ as well. See also \cite{Joseph.14}.
	\end{proof}
	
	We have now gathered most of the required ingredients and background to prove Theorems \ref{thm:Epidemic3} and \ref{thm:ctsAlpha} using the approach in Theorem \ref{thm:conv2}. The last thing we'll verify is that Assumption 3 holds in Theorems \ref{thm:conv1} and \ref{thm:conv2}. By scaling of the $\alpha$ stable graph \cite{CKG.20}, we focus on the case that the total mass equals 1.
	
	\begin{prop}\label{prop:ass3Foralpha} Fix $\alpha\in (1,2)$.
		Let $(\ee^{(\delta)}_x,\tilde{\hp}^{(\delta)}_x)$ be defined as in \eqref{eqn:tilting}.  Let $\GG = \GG(\tilde{\hp}^{(\delta)}_x,\ee^{(\delta)}_x, \QQ)$ denote the continuum random graph where $\QQ$ is a Poisson point process with intensity $\frac{1}{\delta}\Leb$ for some $\delta>0$. Then, almost surely,
		\begin{equation*}
			\mu(B(\rho,t)\setminus\{\rho\})>0 ,\qquad\forall t>0.
		\end{equation*}
		
		The same holds for the graphs $\M_i$ appearing in \eqref{eqn:graphSequenceAlpha}.
	\end{prop}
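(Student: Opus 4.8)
The plan is to push the question down to the continuous height function $h=\tilde{\hp}^{(\delta)}_{x}$ encoding the underlying tree, using that the gluing step producing $\GG$ can only shrink distances, hence cannot shrink the ball around the root. By the construction in Section~\ref{sec:realGraphs}, $\GG=\GG(\tilde{\hp}^{(\delta)}_{x},\ee^{(\delta)}_{x},\QQ)$ is $\T_{h}/\sim$ with $h=\tilde{\hp}^{(\delta)}_{x}$, where $\sim$ identifies the finitely many pairs $u_{j}\sim v_{j}$, $j=1,\dots,k$, coming from the points of $\QQ$ lying under the bounded curve $\ee^{(\delta)}_{x}$ (finitely many a.s., see Section~\ref{sec:realGraphs}). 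Writing $q_{h}\colon[0,x]\to\T_{h}$ and $\bar q\colon\T_{h}\to\GG$ for the canonical projections and $\Pi=\bar q\circ q_{h}$, one has $\rho=\Pi(0)$ and $\mu=\Pi_{\#}(\Leb|_{[0,x]})$. Since the quotient (gluing) metric on $\T_{h}/\sim$ is an infimum over chains, it is dominated by $d_{h}$ (take the trivial one-term chain), so $\bar q$ and hence $\Pi$ are $1$-Lipschitz, and therefore, using $h\ge 0$ and $h(0)=0$,
\[
 d\bigl(\rho,\Pi(s)\bigr)\ \le\ d_{h}(0,s)\ =\ h(0)+h(s)-2\inf_{0\le r\le s}h(r)\ =\ h(s),\qquad s\in[0,x].
\]

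Next I would deduce the lower bound. Fix $t>0$; by continuity of $h$ with $h(0)=0<t$ there is $\eps>0$ with $h<t$ on $[0,\eps)$, hence $\Pi([0,\eps))\subseteq B(\rho,t)$ by the displayed inequality, and since $\mu=\Pi_{\#}\Leb$ and $[0,\eps)\subseteq\Pi^{-1}(\Pi([0,\eps)))$,
\[
 \mu\bigl(B(\rho,t)\bigr)\ \ge\ \Leb\bigl(\Pi^{-1}(\Pi([0,\eps)))\bigr)\ \ge\ \eps\ >\ 0 .
\]
To pass from $B(\rho,t)$ to $B(\rho,t)\setminus\{\rho\}$ I would check $\mu(\{\rho\})=0$ a.s. The $\sim$-class $\bar q^{-1}(\rho)$ of $q_{h}(0)$ is a finite subset of $\T_{h}$ (at most $2k$ points), so $\mu(\{\rho\})=\Leb\bigl(q_{h}^{-1}(\bar q^{-1}(\rho))\bigr)$ is a finite sum of $\mu_{\T_{h}}$-masses of single points, where $\mu_{\T_{h}}=(q_{h})_{\#}\Leb$ is the mass measure of $\T_{h}$. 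The mass measure of the $\alpha$-stable tree is a.s.\ non-atomic \cite{DL.02,Duquesne.03}, and this is preserved by the exponential tilting \eqref{eqn:tilting}, which is mutually absolutely continuous with the law of $(\hp_{x},\e_{x})$; hence $\mu_{\T_{h}}$ is a.s.\ non-atomic, $\mu(\{\rho\})=0$, and $\mu(B(\rho,t)\setminus\{\rho\})=\mu(B(\rho,t))>0$ for every $t>0$.

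Finally, for the spaces $\M_{i}=\GG(\tilde{h}_{i},\tilde{e}_{i},\QQ_{i})$ of \eqref{eqn:graphSequenceAlpha} I would observe that $\tilde{h}_{i}$ is a.s.\ continuous with $\tilde{h}_{i}(0)=0$, that $\QQ_{i}$ contributes finitely many gluings a.s., and that conditionally on its length $\zeta_{i}$ the pair $(\tilde{h}_{i},\tilde{e}_{i})$ is a deterministic rescaling of $(\tilde{\hp}^{(\delta)}_{\zeta_{i}},\ee^{(\delta)}_{\zeta_{i}})$, whose tree again has a.s.\ non-atomic mass measure; the argument above used only these facts, so it applies verbatim, and intersecting over $i\ge1$ gives the full statement. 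The one point needing care is Step~3, namely the a.s.\ non-atomicity of the mass measure (equivalently, the absence of an atom of $\mu$ at $\rho$); everything else is a soft consequence of the quotient metric being dominated by the tree metric and of $h$ being continuous at $0$.
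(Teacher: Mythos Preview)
Your proof is correct and follows essentially the same route as the paper: both arguments exploit that the gluing map $\T_{h}\to\GG$ is $1$-Lipschitz, so the $\mu$-mass of the ball $B(\rho,t)$ in $\GG$ is bounded below by the tree mass $\int_0^x 1_{[h(s)<t]}\,ds$, which is positive for every $t>0$ by continuity of $h$ at $0$. The only difference is in handling the ``$\setminus\{\rho\}$'' part: the paper dispatches this implicitly (writing the indicator over $(0,t)$ and noting $h$ is a.s.\ not identically zero, so the set $\{s\in(0,x):0<h(s)<t\}$ has positive Lebesgue measure and maps away from $\rho$), while you instead invoke non-atomicity of the stable tree's mass measure---a valid but slightly heavier-handed alternative, since a.s.\ no Poisson mark lands on the root and hence $\Pi^{-1}(\rho)=\{0,x\}$ already has Lebesgue measure zero.
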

	\begin{proof}
		
		The same statement holds for the graphs $\M_i$ hold by conditioning on their mass $\mu_i(\M_i)$ \cite[Theorem 1.2]{CKG.20}.

		We use the tree $\T_{\tilde{\hp}^{(\delta)}_x}$ as a measured metric space, and when confusion might arise we will use subscripts to specify whether we are dealing with the tree or the graph.
		
		We observe that from the quotient map
		\begin{equation*}
			q:\T_{\tilde{\hp}^{(\delta)}_x} \longrightarrow \GG(\tilde{\hp}^{(\delta)}_x,\ee^{(\delta)}_x,\QQ)
		\end{equation*} in the construction of the random graph satisfies the following:
		\begin{equation*}
			\ddd_{\GG}(\rho,q(x))\le \ddd_{\T_{\tilde{\hp}^{(\delta)}_x}} (\rho,x),\qquad \forall x\in \T_{\tilde{\hp}^{(\delta)}_x}.
		\end{equation*}
		Consequently,
		\begin{equation*}
			\mu_{\GG}\left( B_{\GG}(\rho,t)\right) \ge \mu_{\T_{\tilde{\hp}^{(\delta)}_x}}\left(B_{\T_{\tilde{\hp}^{(\delta)}_x}}(\rho,t)\right) = \int_0^1 1_{[\tilde{\hp}^{(\delta)}_x(s)\in (0,t)]}\,ds.
		\end{equation*} Since the process $\tilde{\hp}^{(\delta)}_x$ is non-negative and almost surely not identically zero. The result follows easily.
	\end{proof}
	
	\subsection{Proof of Theorem \ref{thm:Epidemic3}} \label{sec:epidemic3}

	We now prove Theorem \ref{thm:Epidemic3}.
	
	\begin{proof}[Proof of Theorem \ref{thm:Epidemic3}]
		
		Throughout the proof all limits will be as $n$ or a subsequence of $n$ goes towards infinity.
		
		Make the components of the graph $G_n^i$, as measured metric spaces with graph distance and the measure of each vertex is one. 
		
		The processes $Z_{n,i}$ measure the number of vertices infected on day $h$, which is simply the number of vertices at distance $h$ from $\rho_i$ in $G_n^i$:
		\begin{equation*}
			Z_{n,i}(h) = \#\{v\in G_n^i : d(v,\rho_i) = h\}
		\end{equation*} and the process $C_{n,i} = (C_{n,i}(h);h\ge 0)$ denote its running sum:
		\begin{equation*}
			C_{n,i}(h) = \sum_{j=0}^h Z_{n,i}(j) = \m_i(\{v\in G_n^i : d(v,\rho_i)\le h\}).
		\end{equation*} These processes measure something close to the height profile on the components of the forests $F_{n}^\BF(\nu)$; however it is not exactly the same because of the addition of new leaves. This complicates a direct application of Theorem \ref{thm:conv2}.
		
		Let us write $Z^*_{n,i}(h)$ is the discrete Lamperti transform of $X_{n,i}^\BF$:
		\begin{equation} \label{eqn:lampZstar}
			Z^*_{n,i}(h+1) = 1+ X_{n,i}^\BF\circ C_{n,i}^*(h) ,\qquad C_{n,i}^*(h) = \sum_{j=0}^h Z^*_{n,i}(j).
		\end{equation} This process $Z_{n,i}^*$ measures the number of vertices at height $h$ in the $i^\text{th}$ largest component of the forest $F_{n}^\BF(\nu)$; see the discussion around \eqref{eqn:discLamperti2} and more generally \cite{CPGUB.13}. Said another way, the values of $Z_{n,i}^*(h)$ and $Z_{n,i}(h)$ for a fixed $h$ only differ by the number of new leaves at height $h$ in the component of the forest $F_n^{\BF}(\nu)$.
		
		The total number of such additional vertices is twice the number of bf backedges (which is the number of df backedges as well). Therefore, for a fixed index $i$, the number of bf backedges in $G_n^i$ is a tight sequence in the index $n$ of random variables. Indeed, a stronger statement is true. By Proposition 5.12 in \cite{CKG.20} the weak convergence
		\begin{equation*}
			\#\{\text{bf backedges in }G_n^i\} \weakarrow \text{Poisson}\left(\frac{1}{\delta} \int_0^{\zeta(\tilde{e}_i)} \tilde{e}_i(t)\,dt \right),
		\end{equation*} where $\tilde{e}_i$ is as in \eqref{eqn:markExcursionConv}. Now for each $i$ we can bound the difference between $Z_{n,i}(h)$ and $Z_{n,i}^*(h)$ for each $i$ uniformly in $h$. Indeed
		\begin{equation*}
			\begin{split}
				\sum_{h\ge 0} |Z_{n,i}(h) - Z_{n,i}^*(h)| &=	\sum_{h\ge 0} \left|\#\{v\in G_n^i : d(v,\rho_i) = h\} - Z^*_{n,i}(h) \right|\\
				&= 2\cdot \#\{\text{bf backedges in }G_n^i\}\le \kappa_{n,i}
			\end{split}
		\end{equation*} where for each $i\ge 1$ the sequence $(\kappa_{n,i};n\ge 1)$ is a tight sequence of random variables.
		
		By Slutsky's theorem, in order to prove the rescaled convergence of $(Z_{n,i};i\ge 1)$ to the desired limit, we just need to prove the convergence of $(Z^*_{n,i};i\ge 1)$ under the same scaling regime to the same limiting processes. Indeed, their difference, when rescaled by $n^{-\frac{1}{\alpha+1}}$ converges in probability to the zero path in the Skorohod space:
		\begin{equation*}
			\left(n^{-\frac{1}{\alpha+1}} \kappa_{n,i} ;t \ge 0\right) \weakarrow \left(0; t\ge 0\right)\qquad\text{as }n\to\infty.
		\end{equation*}

		By Proposition \ref{prop:jointCX}, for any fixed integer $N$
		\begin{equation*}
			\left(\left(\left(\left(n^{-\frac{\alpha}{\alpha+1}}C^*_{n,i}(\fl{n^{\frac{\alpha-1}{\alpha+1}} t});t\ge 0\right),\left( n^{-\frac{1}{\alpha+1}} X_{n,i}^\BF (\fl{n^{\frac{\alpha}{\alpha+1}}t});t\ge 0\right) \right); i\in[N]\right) ;n\ge 1\right)
		\end{equation*} viewed as a sequence in $\D(\R_+,\R)^{2N}$, is tight. Consequently, in the product topology over the index $i\ge 1$ the sequence
		\begin{equation}\label{eqn:cxcombo}
			\left(\left(\left(\left(n^{-\frac{\alpha}{\alpha+1}}C^*_{n,i}(\fl{n^{\frac{\alpha-1}{\alpha+1}} t});t\ge 0\right),\left( n^{-\frac{1}{\alpha+1}} X_{n,i}^\BF (\fl{n^{\frac{\alpha}{\alpha+1}}t});t\ge 0\right) \right); i\ge 1\right) ;n\ge 1\right)
		\end{equation} is tight in $(\D^2)^\infty$. Additionally, by Proposition \ref{prop:jointCX} any subsequential limit, say
		\begin{equation} \label{eqn:lampLim1}
			\left(\left(\left(C_i(t);t\ge 0\right), \left(X_i(t);t\ge 0\right)\right); i\ge 1\right),
		\end{equation} must satisfy $C_i(t) = \int_0^t X_i\circ C_i(s)\,ds$. Moreover, the sequence $(X_i;i\ge 1)\overset{d}{=}(\tilde{e}_i^*;i\ge 1) \overset{d}{=}(\tilde{e}_i;i\ge 1)$. In particular subsequential limits of \eqref{eqn:cxcombo} are classified by a time-shift as in Proposition \ref{prop:AUB}.

		By Theorem \ref{thm:ckgThm2} and Lemma \ref{lem:convBFDFequal}, we know that the convergences in \eqref{eqn:markExcursionConv}, \eqref{eqn:graphStableconv} and \eqref{eqn:excursionBFstable} hold. By a tightness argument, we can assume that sequence converge jointly along a subsequence, which we will denote by the index $n$.

		Observe the sequence
		\begin{equation*}
			\left(\left(\left( n^{-\frac{\alpha}{\alpha+1}} C_{n,i}(\fl{n^{\frac{\alpha-1}{\alpha+1}}t}); t\ge 0\right);i\ge 1\right);n\ge 1\right)
		\end{equation*} is tight in $\D^\infty$. Indeed, this easily follows the tightness of
		\begin{equation*}
			\left(\left(	\left( n^{-\frac{\alpha}{\alpha+1}} C_{n,i}^*(\fl{n^{\frac{\alpha-1}{\alpha+1}}t}); t\ge 0\right);i\ge1\right);n\ge 1\right)
		\end{equation*} in $\D^\infty$ discussed above
		and the bounds
		\begin{equation*}
			|C_{n,i}(h) - C_{n,i}^*(h)| \le \sum_{j\ge 0} |Z_{n,i}(j)-Z_{n,i}^*(j)| \le \kappa_{n,i}. 
		\end{equation*} 
		
		Let us work on a subsequence of \eqref{eqn:cxcombo} which converges to \eqref{eqn:lampLim1}. Call this index $n_j$. Then, by the previous paragraph, 
		\begin{equation*}
			\left(\left(	n_j^{-\frac{\alpha}{\alpha+1}} C_{n_j,i}( \fl{n_j^{\frac{\alpha-1}{\alpha+1}}t});t\ge 0\right) ;i\ge 1\right) \weakarrow \left(\left({C}_i(t);t\ge 0\right);i\ge 1\right)
		\end{equation*} for the same processes ${C}_i$ in \eqref{eqn:lampLim1}. However, $C_{n,i}(h) = \#\{v\in G_{n}^i: d(v,\rho_i)\le h\}$ is just the measure of the ball of radius $h$ in $G_n^i$ and so an application of Lemma \ref{lem:ballCont} implies
		\begin{equation}\label{eqn:cConvtoMeas}
			{n_j}^{-\frac{\alpha}{\alpha+1}} C_{n_j,i}(\fl{n_j^{\frac{\alpha-1}{\alpha+1}}t}) \weakarrow \mu_i(B(\rho,t)), \qquad \text{for all but countably many  }t>0,
		\end{equation} where $\mu_i$ is the mass measure on the scaling limit of the graph component $G_n^i$. Hence ${C}_i(t)$ must satisfy:
		\begin{equation*}
			{C}_i(t) \overset{d}{=}\mu_i(B(\rho,t)), \qquad\text{for Lebsgue a.e. } t>0.
		\end{equation*} It follows easily from Proposition \ref{prop:ass3Foralpha} that
		\begin{equation*}
			\P(C_i(t)>0, \forall t>0)  =1,\qquad \forall i\ge 1.
		\end{equation*} Indeed $C_i(t)$ is non-decreasing, and we can find a countable dense set of $t>0$ such that $P(C_i(t) > 0) = 1$. Hence, by Propositions \ref{prop:AUB} and $(X_i;i\ge 1) \overset{d}{=} (\tilde{e}_i;i\ge 1)$, we get $((C_i, X_i);i\ge 1) \overset{d}{=} ((C_i,\tilde{e}_i);i\ge 1)$ where $(Z_i,C_i)$ is the Lamperti pair associated with $\tilde{e}_i$. Since this works for any subsequential limit, we conclude that the original sequence converges:
		\begin{equation*}
			\left(	\left(n^{-\frac{\alpha}{\alpha+1}} C_{n,i}(\fl{n^{\frac{\alpha-1}{\alpha+1} } t};t\ge 0 \right);i\ge 1\right) \weakarrow (C_i;i\ge 1).
		\end{equation*}

		A similar proof of Proposition \ref{prop:jointCXwithLimit} yields the joint convergence
		\begin{equation}\label{eqn:t1}
			\left(\left( n^{-\frac{1}{\alpha+1}} Z^*_{n,i}(\fl{n^{\frac{\alpha-1}{\alpha+1}}t});t\ge 0\right), \left( n^{-\frac{\alpha}{\alpha+1}} C_{n,i}^*(\fl{n^{\frac{\alpha-1}{\alpha+1}}t}); t\ge 0\right) ;i\ge 1\right)\weakarrow  ((Z_i,C_i);i\ge 1)
		\end{equation} where $(Z_i,C_i)$ is the Lamperti pair associated with the excursion $\tilde{e}_i$.
		
		This proves the desired claim.
	\end{proof}

	Before turning to proof of Theorem \ref{thm:ctsAlpha}, we state and prove the following lemma:
	\begin{lem}\label{lem:20} Couple the depth-first and breadth-first walks as in Lemma \ref{lem:convBFDFequal}.
		Under the assumptions of Theorem \ref{thm:alphaStableLimitLiterature}, and using the notation in \eqref{eqn:lampZstar}. There is joint convergence in distribution along a subsequence of the index $n\ge 1$ of the collection
		\begin{equation*}
			\begin{split}
				\bigg(	\bigg( \Big( n^{-\frac{1}{\alpha+1}}& Z^*_{n,i}(\fl{n^{\frac{\alpha-1}{\alpha+1}}t});t\ge 0\Big),\Big( n^{-\frac{\alpha}{\alpha+1}} C_{n,i}^*(\fl{n^{\frac{\alpha-1}{\alpha+1}}t});t\ge 0\Big),\\
				& \Big(n^{-\frac{1}{\alpha+1}} X_{n,i}^\BF (\fl{n^{\frac{\alpha}{\alpha+1}} t}) ;t\ge 0\Big), \scale(n^{-\frac{\alpha-1}{\alpha+1}}, n^{-\frac{\alpha}{\alpha+1}} )G_n^i, \#\PP_{n,i}^\BF \bigg);i\ge 1\bigg)_{n\ge 1}
			\end{split}
		\end{equation*} towards
		\begin{equation*}
			((Z_i,C_i,\tilde{e}^*_i, \M_i, \sur(\M_i));i\ge 1),
		\end{equation*} where
		\begin{enumerate}
			\item $\M_i$ are as in Theorem \ref{thm:ckgThm2} and \eqref{eqn:graphSequenceAlpha};
			\item $(Z_i,C_i)$ is the Lamperti pair associated with the excursion $\tilde{e}^*_i$;
			\item The process $C_i(t) = \mu_i(B(\rho,t))$ for almost all (and hence all) $t\ge 0$;
			\item The excursions $(\tilde{e}_i^*;i\ge 1)\overset{d}{=} (\tilde{e}_i;i\ge 1)$ in the construction of $\M_i$, and, in particular, the length of the excursion $\tilde{e}_i^*$ is the mass of the space $\M_i$, i.e.
			\begin{equation*}
				\zeta(\tilde{e}_i^*) = \M_i;
			\end{equation*}
			\item The random variable $\sur(\M_i)$, the surplus of the space $\M_i$ is
			\begin{equation*}
				\sur(\M_i) \overset{d}{=} \operatorname{Poisson}\left(\frac{1}{\delta}\int_0^{\zeta(\tilde{e}_i^*)}\tilde{e}_i^*(t)\,dt\right);
			\end{equation*}
			\item Lastly, conditionally on the length of excursion $\zeta_i:=\zeta(\tilde{e}_i^*)$ and the surplus values $R_i:=\sur(\M_i)$, the graph $\M_i$ satisfies
			\begin{equation*}
				\M_i \overset{d}{=} \scale\left(\zeta_i^{(\alpha-1)/\alpha} ,\zeta_i \right) \GG^{(\alpha,R_i)}
			\end{equation*}
		\end{enumerate}
	\end{lem}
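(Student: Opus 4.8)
The plan is to assemble the six assertions from ingredients already available: Theorem~\ref{thm:ckgThm2}, the breadth-first/depth-first symmetry of Lemmas~\ref{lem:convBFDFequal} and~\ref{lem:18}, the tightness statement of Proposition~\ref{prop:jointCX}, Lemma~\ref{lem:stableExcursionsSameSize}, and the argument already used in the proof of Theorem~\ref{thm:Epidemic3}. First I would observe that the graph $G_n^i$ (hence its $\scale$-rescaling) does not depend on whether it is explored depth-first or breadth-first, so Theorem~\ref{thm:ckgThm2} combined with Lemmas~\ref{lem:convBFDFequal} and~\ref{lem:18} yields the joint weak convergence
\[
\begin{aligned}
&\left( n^{-\frac{1}{\alpha+1}} X_{n,i}^\BF(\fl{n^{\frac{\alpha}{\alpha+1}}t});t\ge 0,\ \PP_{n,i}^\BF,\ \scale(n^{-\frac{\alpha-1}{\alpha+1}}, n^{-\frac{\alpha}{\alpha+1}})G_n^i ;\ i\ge 1 \right) \\
&\qquad \weakarrow \left(\tilde e_i^*,\ \PP_i^*,\ \M_i;\ i\ge 1\right),
\end{aligned}
\]
where $(\tilde e_i^*,\PP_i^*,\M_i)$ has the same law as the depth-first triple $(\tilde e_i,\PP_i,\M_i)$ appearing in \eqref{eqn:markExcursionConv}--\eqref{eqn:graphStableconv}; in particular $\#\PP_i^* = \sur(\M_i)$ by the construction of $\M_i$ recalled in Section~\ref{sec:realGraphs}. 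Since $\#\PP_{n,i}^\BF$ is the number of bf backedges of $G_n^i$, Proposition 5.12 of \cite{CKG.20} identifies its limit as $\operatorname{Poisson}\big(\frac1\delta\int_0^{\zeta(\tilde e_i)}\tilde e_i(t)\,dt\big)$, which is item (5); items (1) and (4) then follow from \eqref{eqn:excursionBFstable} and Lemma~\ref{lem:stableExcursionsSameSize}.

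Next I would adjoin the processes $Z_{n,i}^*$ and $C_{n,i}^*$ from \eqref{eqn:lampZstar}. By Proposition~\ref{prop:jointCX}, for each fixed $i$ the pair $\big(n^{-\frac{\alpha}{\alpha+1}}C_{n,i}^*(\fl{n^{\frac{\alpha-1}{\alpha+1}}\cdot}),\, n^{-\frac{1}{\alpha+1}}X_{n,i}^\BF(\fl{n^{\frac{\alpha}{\alpha+1}}\cdot})\big)$ is tight, hence so is the family indexed by $i\ge1$ in the product topology; jointly with the convergence of the previous paragraph we may pass to a further subsequence along which every coordinate converges. By Proposition~\ref{prop:jointCX} any subsequential limit $(Z_i,C_i,\tilde e_i^*,\M_i,\sur(\M_i))$ satisfies $C_i(t)=\int_0^t \tilde e_i^*\circ C_i(s)\,ds$, so by Proposition~\ref{prop:AUB} it remains only to check that $\inf\{t:C_i(t)>0\}=0$ almost surely to conclude that $(Z_i,C_i)$ is the Lamperti pair of $\tilde e_i^*$, which is item (2).

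To verify this I would reuse the argument from the proof of Theorem~\ref{thm:Epidemic3}: the bound $|C_{n,i}(h)-C_{n,i}^*(h)|\le \kappa_{n,i}$, with $\kappa_{n,i}$ (twice the number of bf backedges) tight in $n$, shows that after rescaling by $n^{-\alpha/(\alpha+1)}$ the limits of $C_{n,i}^*$ and $C_{n,i}$ coincide; but $C_{n,i}(h)=\#\{v\in G_n^i:\dist(\rho_i,v)\le h\}$ is the mass of a closed ball, so Lemma~\ref{lem:ballCont} gives $C_i(t)\overset{d}{=}\mu_i(B(\rho,t))$ for Lebesgue-a.e.\ $t$ (and hence, by monotonicity, for all $t$), which is item (3). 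Monotonicity of $C_i$ together with Proposition~\ref{prop:ass3Foralpha} then forces $C_i(t)>0$ for all $t>0$ a.s., so $\inf\{t:C_i(t)>0\}=0$. Since this pins down the law of every subsequential limit to the same distribution, the stated convergence holds along a subsequence.

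Finally, item (6) is a restatement of the scaling/decomposition property of the $\alpha$-stable graph from \cite{CKG.20}: conditionally on its mass $\zeta_i=\mu_i(\M_i)=\zeta(\tilde e_i^*)$ and its surplus $R_i=\sur(\M_i)$, the encoding pair $(\tilde h_i,\tilde e_i)$ is a deterministic $\zeta_i$-scaling of the unit-mass, surplus-$R_i$ pair built from the polynomially tilted excursion $\e^{(R_i)}$ of \eqref{eqn:ktilt}, where the height-process scaling $\hp_x(t)=x^{(\alpha-1)/\alpha}\hp(x^{-1}t)$ supplies the distance factor $\zeta_i^{(\alpha-1)/\alpha}$ and the time-length scaling supplies the mass factor $\zeta_i$; since the $\scale$ operation commutes with the gluing construction of $\GG(\cdot,\cdot,\cdot)$, this gives $\M_i\overset{d}{=}\scale(\zeta_i^{(\alpha-1)/\alpha},\zeta_i)\GG^{(\alpha,R_i)}$. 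The main obstacle is bookkeeping rather than a new idea: ensuring that all five coordinates converge \emph{jointly}, so that in the limit the Lamperti pair is attached to exactly the excursion $\tilde e_i^*$ that builds $\M_i$ and the surplus matches, which is handled by the tightness-and-subsequence device above, and checking that the exponent in the $\scale$ factor of item (6) is the one forced by the height-process scaling.
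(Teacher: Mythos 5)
Your proposal assembles exactly the same ingredients the paper uses — Theorem~\ref{thm:ckgThm2}, Lemmas~\ref{lem:convBFDFequal}, \ref{lem:18}, \ref{lem:stableExcursionsSameSize}, Propositions~\ref{prop:jointCX} and~\ref{prop:ass3Foralpha}, the proof of Theorem~\ref{thm:Epidemic3}, and the cited results from~\cite{CKG.20} — and uses the same tightness-and-subsequence bookkeeping, so this is essentially the paper's proof, just written out more fully. One small imprecision worth tightening: for item (3) you conclude $C_i(t)\overset{d}{=}\mu_i(B(\rho,t))$, but the lemma asserts an almost-sure equality between coordinates of the joint limit; since Lemma~\ref{lem:ballCont} is a deterministic statement, applying it under the Skorohod coupling you have already invoked yields the a.s.\ equality directly, so you should state the conclusion at that strength.
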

	
	\begin{proof}
		These are tight random variables in each of the marginals, so joint convergence along a subsequence is standard.
		
		Item 1 follows from the referenced theorem.
		
		Item 2 follows from the proof of Theorem \ref{thm:Epidemic3} and the identity in distribution $(\tilde{e}_i;i\ge 1)\overset{d}{=}( \tilde{e}_i^*;i\ge 1)$ previously seen.
		
		Item 3 follows from the proof of Theorem \ref{thm:Epidemic3}, particularly around \eqref{eqn:cConvtoMeas}.
		
		Item 4 is from Lemma \ref{lem:stableExcursionsSameSize}.
		
		Item 5 follows from Theorem 5.5 and Proposition 5.12 in \cite{CKG.20} along with the equality $\#\PP_{n,1}^\DF = \#\PP_{n,i}^\BF$.
		
		Item 6 follows from the proof of Theorem 1.2 in \cite{CKG.20}.
	\end{proof}

	\subsection{Proof of Theorem \ref{thm:ctsAlpha}} \label{sec:ctsAlpha}

	\begin{proof}[Proof of Theorem \ref{thm:ctsAlpha}]

		The big content of this proof is to show that we can condition on the length $\zeta(\tilde{e}_i^*)$ of the excursion $\tilde{e}_i^*$ and the surplus of the graphs $\M_i$ by using Lemma \ref{lem:20} and scaling results for the excursions proved in \cite{CKG.20}.

		By Lemma \ref{lem:20}, we know that we can write the height profile of the graph $\M_i$ (which is of random mass) as the process $Z_i$ where $(Z_i,C_i)$ is the Lamperti pair associated with the excursion $\tilde{e}^*_i$. In fact, we know
		\begin{equation*}
			\left( (\mu_i(B(\rho_i,v);v\ge 0), \mu_i(\M_i), \sur(\M_i)\right)_{i\ge 1} \overset{d}{=} \left((C_i(v);v\ge 0), \zeta(\tilde{e}_i^*), R_i \right)_{i\ge 1}
		\end{equation*} where $(Z_i,C_i)$ is the Lamperti pair associated with the excursion $\tilde{e}_i^*$ and \linebreak $R_i\sim$ $ \text{Poisson}\left(\frac{1}{\delta}\int_0^{\zeta(\tilde{e}_i^*)} \tilde{e}^*_i(t)\,dt\right)$.
		
		Conditioning on the values of $\zeta(\tilde{e}_1^*)$ and $R_1$ gives
		\begin{equation} \label{eqn:conditEq}
			\left( (\mu_1(B(\rho_1,v));v\ge 0)\bigg| \mu_1(\M_1) = 1, \sur(\M_1) = k\right) \overset{d}{=} \left((C_1(t);t\ge 0) \bigg| \zeta(\tilde{e}_1^*) = 1, R_1 = k \right).
		\end{equation}

		We can use the \textit{proof} of Theorem 1.2 in \cite{CKG.20} to handle this conditioning on the right-hand side  and the statement of Theorem 1.2 in \cite{CKG.20} to handle the left-hand side.
		
		The conditioning in the proof of Theorem 1.2 in \cite{CKG.20} gives
		\begin{equation*}
			\E\left[ g(\tilde{e}_i^*)\big| R_i = k, \zeta(\tilde{e}_i) = 1\right] = \E[g(\e^{(k)})]
		\end{equation*} for all positive functionals $g$ and where $\e^{(k)} = (e^{(k)}(t);t\in[0,1])$ is defined in \eqref{eqn:ktilt}. In particular this holds for $i = 1$. Recall from Section \ref{sec:lamperti}, that $c_1$ is simply a functional of $\tilde{e}_1^*$. Therefore, conditionally on the values of $\zeta(\tilde{e}^*_1)$ and $R_1$ we have
		\begin{equation*}
			\left((C_1(t);t\ge 0) \bigg| \zeta(\tilde{e}_1^*) = 1, R_1 = k \right) \overset{d}{=} \left(\ccc^{(k)}(t);t\ge 0 \right)
		\end{equation*} where $(\z^{(k)},\ccc^{(k)})$ is the Lamperti pair associated with the excursion $\e^{(k)}$. 
		
		The left-hand side of \eqref{eqn:conditEq} is easy to condition with part (6) of Lemma \ref{lem:20}. Conditionally on the values of $\mu_1(\M_1)$ and $\sur(\M_1)$ (which is precisely the conditioning described above for $c_1$) the metric spaces $\M_1$ satisfies
		\begin{equation*}
			\left(\M_1 \bigg| \mu_1(\M_1) = 1, \sur(\M_1) = k\right) \overset{d}{=}  \GG^{(\alpha,k)}.
		\end{equation*}
		Hence
		\begin{equation*}
			\left(\mu_{\GG^{(\alpha,k)}}(B(\rho,v));v\ge 0 \right) \overset{d}{=} \left(\ccc^{(k)}(v);v\ge 0 \right).
		\end{equation*} An application of Proposition \ref{prop:AUB} completes the proof.
	\end{proof}
	
	\section{Discussion} \label{sec:disc}

	In this work we showed convergence of the height profiles for the macroscopic components of a certain class of critical random graphs. We did this by looking at the height profile of these graphs and we relied on the weak convergence results that exist in the literature on some encoding stochastic processes. We observe that these techniques can likely be extended to other graph models appearing in the literature.
	
	For example, the work of Broutin, Duquesne and Wang \cite{BDW.18,BDW.20} provides the rescaled convergence under certain conditions of the \textit{rank-1 inhomogeneous model} associated to a weight sequence $\ww = (w_1,\dotsm,w_n)$. That graph, whose asymptotics were studied by Aldous and Limic in \cite{AL.98}, is a graph on $n$ vertices where edges are added independently with probability
	\begin{equation*}
		\P\left( \text{edge} \{i,j\} \text{ is included}\right)= 1 - \exp\left(-w_i w_j /q\right)
	\end{equation*} for some parameter $q>0$. This graph goes by other names as well: the Poisson random graph \cite{BvdHavL.10,NR.06} and the Norros-Reittu model \cite{BvdHavL.10}. See also \cite{BvdHRS.18,BvdHavL.12} and Section 6.8.2 of \cite{vanderHofstad.17} for more information.
	The resulting limiting processes and graphs are related to L\'{e}vy-type processes (sometimes called L\'{e}vy processes without replacement) constructed from spectrally positive L\'evy processes which {are not} stable. As in \cite{ABBG.12, CKG.20}, Broutin, Duquesne and Wang show convergence of the graphs as metric spaces by using a depth-first descriptions. However, there is also convergence of the breadth-first walks \cite{AL.98}, and so proving convergence of the height profiles should similar to the proof of Theorems \ref{thm:Epidemic3} and Theorems \ref{thm:conv2}.
	
	Using the results in the literature on Galton-Watson trees conditioned on having a fixed size \cite{LeGall.05,Duquesne.03, MM.03, Aldous.93} one can recover the Jeulin identity \cite{JY.85} and its $\alpha$-stable extension due to Miermont \cite{Miermont.03} from our Theorem \ref{thm:conv1} as well. The proofs in \cite{Miermont.03, JY.85} do not rely on weak convergence arguments. For proofs using weak-convergence arguments more in-line with the results of this papers see Kersting's work \cite{Kersting.11}, or joint work of Angtuncio and Uribe Bravo \cite{AUB.20}. See also \cite{AMP.04} for a weak convergence result in a slightly weaker topology.
	
	More generally, under certain conditions (see Theorem 2.3.1 in \cite{DL.02}) on the offspring distribution, there is convergence of Galton-Watson forests to continuum forests encoded by spectrally positive L\'{e}vy processes. Under these assumptions, one can use a modification of Lemma 4.8 in \cite{MR.17} or Lemma 5.8 in \cite{CKG.20}, one should be able to prove a Jeulin-type identity for excursions for non-stable L\'{e}vy processes and their associated height processes by a simple application of Theorem \ref{thm:conv2}. As far as the author is aware, such results are not present in the literature.

	\begin{appendix} 
		
		\section*{} 
		
		In this appendix we recall the construction of the depth-first walks and auxiliary processes in an analogous way that Conchon-Kerjan and Goldschmidt \cite[pg. 30, 32]{CKG.20} do in their work. We do this from a deterministic degree sequence, whereas they work with a random degree sequence. That is we fix an $n$ and $\dd^n = (d_1,\dotsm, d_n)$ with $d_j\ge 1$. We omit reference to $\dd^n$ from our notation.
		
		We let $M$ denote a uniformly random multigraph with degree distribution $\dd^n$. The vertices of $M$ can be ordered in a depth-first order, $v_1^\DF,\dotsm, v_n^\DF$, or a breadth-first order $v_1^\BF,\dotsm, v_n^\BF$. We let $D_j^\DF = \deg(v^\DF_j)$ and $D_j^\BF = \deg(v^\BF_j)$ counted with multiplicity. Let $F^\DF$ and $F^\BF$ be the forests constructed from $M$ by removing backedges and replacing them with two leaves and let $X^\DF$ (resp. $X^\BF$) denote the depth-first (resp. component-by-component breadth-first) walk on $F^\DF$ (resp. $F^\BF$).
		
		We write
		\begin{equation*}
			S^\DF(k) = \sum_{j=1}^k( D^\DF_j - 2),\qquad S^\BF(k) = \sum_{j=1}^k (D_j^\BF - 2).
		\end{equation*} These walks appeared (with slightly different notation) in \eqref{eqn:swalks} however they are equal in distribution.

		Let us now discuss the construction of the walk $X^\DF$ in \eqref{eqn:xwalks1} from the walk $S^\DF$. We start with $N^\DF(0) = X^\DF(0) = 0$, $\M^\DF(0) = \emptyset$ and $\tau^\DF(0) = 0$. The process $N^\DF$ counts the number of df backedges discovered in the graph at step $k$, and the set-valued process $\M^\DF$ keeps track of the marks corresponding to the df backedges. The process $\tau^\DF(k)$ is a time-change relating to the new leaves that will be included in the forest $F^\DF$. For $k\ge 0$:
		\begin{itemize}
			\item \textbf{New component of $F^\DF$ is discovered}\\If $X^\DF(k) = \min_{0\le i\le k-1} X^\DF(i) - 1$ or $k = 0$, then we have discovered a new component. We set $\tau^\DF(k+1) = \tau^\DF(k)+1$, $N^\DF(k+1) = N^\DF(k)$, $\M^\DF(k+1) = \M^\DF(k)$, and
			\begin{equation*}
				X^\DF(k+1) = X^\DF(k) + S^\DF(\tau^\DF(k)+1) - S^\DF(\tau^\DF(k)) +1.
			\end{equation*}
			\item \textbf{Determine if we start a back-edge or not}\\
			If $X^\DF(k)> \min_{0\le i\le k-1} X^\DF(i) -1$ and $X^\DF(k)\notin \M^\DF(k)$ then the vertex $u_{k+1}^\DF$ in $F^\DF$ is not a new-leaf paired to a previously explored new-leaf. There is still a chance that $u_{k+1}^\DF$ is a new-leaf which is paired with an undiscovered new leaf.
			\begin{itemize}
				\item \textbf{The vertex $u_{k+1}^\DF$ is a new-leaf}\\
				The vertex $u_{k+1}^\DF$ is a new leaf with probability
				\begin{equation*}
					\frac{\displaystyle X^\DF(k) - \min_{0\le i\le k} X^\DF(i) - \#\M^\DF(k)}{ \displaystyle X^\DF(k) - \min_{0\le i\le k} X^\DF(i) - \# \M^\DF(k) + \sum_{j = \tau^\DF(k)+1}^n D^\DF_j}.
				\end{equation*} Above, the numerator represents the number of active half-edges in the corresponding exploration of the mulitgraph. We substract that $\#\M^\DF(k)$ term because these represent $\#\M^\DF(k)$ new-leafs which would have already been killed at the corresponding step in the exploration of the multigraph. The denominator counts the total number of half-edges yet to be explored in the multigraph.
				
				In this situation, let $\tau^\DF(k+1)  = \tau^\DF(k)$. We also now that $u_{k+1}^\DF$ is a new leaf with no children and so we set $X^\DF(k+1) = X^\DF(k)-1$. Let $N^\DF(k+1) = N^\DF(k)+1$ and sample $U^\DF(k+1)$ uniformly from
				\begin{equation*}
					\left\{ \min_{0\le i\le k} X^\DF(i), \min_{0\le i\le k} X^\DF(i) +1,\dotsm, X^\DF(k)-1\right\} \setminus \M^\DF(k).
				\end{equation*} This new leaf $u_{k+1}^\DF$ will be connected to a the vertex $U^\DF(k+1)$.
				
				Lastly, set $\M^\DF(k+1) = \M^\DF(k)\cup\{U^\DF(k+1)\}$.
				
				\item \textbf{The vertex $u_{k+1}^\DF$ is part of the original multigraph}\\ With the complement probability the vertex $u_{k+1}^\DF$ is not a new leaf. In which case set $\tau^\DF(k+1) = \tau^\DF(k)+1$,
				\begin{equation*}
					X^\DF(k+1) = X^\DF(k) + S^\DF(\tau^\DF(k) +1 ) - S^\DF(\tau^\DF(k)),
				\end{equation*} $N^\DF(k+1) = N^\DF(k)$ and $\M^\DF(k+1) = \M^\DF(k)$.
			\end{itemize}
			
			\item \textbf{Ending a back-edge}\\
			If $X^\DF(k) > \min_{0\le i\le k-1} X^\DF(i)-1$ and $X^\DF(k)\in \M^\DF(k)$ then the vertex $u_{k+1}^\DF$ is a new-leaf which is connected to a previously discovered new leaf. We let $\tau^\DF(k+1) = \tau^\DF(k)$, $X^\DF(k+1) = X^\DF(k) -1$, $N^\DF(k+1) = N^\DF(k)$ and $\M^\DF(k+1)= \M^\DF(k) \setminus\{X^\DF(k)\}$.
		\end{itemize}

		We note that the above construction is the discretized version of creating the continuum random graphs in Section \ref{sec:realGraphs}. The term marks in the main body of this work were described by $\PP^\DF_{n,k}$ in the $k^\text{th}$ component of the graph. These marks of the time-shifting and time-scalings of the pairs $(i+1,U(i+1))$ whenever $N(i+1) - N(i) = 1$.
		
		Of course, we can do the same process with replacing every $\DF$ above with a $\BF$ and see that there is a distributionally equivalent way of constructing the walk $X^\BF$ from the walk $S^\BF$. Since the breadth-first and depth-first constructed multigraphs are equal in distribution, and by the first part of Lemma \ref{lem:convBFDFequal}, which has a trivial proof, we can use the above construction to create a coupling between the forests $F^\DF$ and $F^\BF$ where the depth-first walk on the former $X^\DF$ is the same as the breadth-first walk on the latter $X^\BF$. Moreover, this shows that a df-backedge between $u_j^\DF$ and $u_i^\DF$ in this coupling corresponds to precisely the a bf-backedge between $u_j^\BF$ and $u_i^\BF$. An analogous symmetry was used in \cite{AMP.04} to describe the height profile of inhomogeneous continuum random trees. 
	\end{appendix}

	%
	%
	
	\section*{Acknowledgements}
	
	Research supported in part by NSF Grant DMS-1444084.
	
	The author would like to thank Soumik Pal for continuing guidance and support. In particular his suggestions drastically improved the clarity of the introduction. 
	
	

\end{document}